\documentclass{amsart}
\usepackage{amsmath,amsthm,amsfonts, amssymb,amscd}
\usepackage[all]{xy}

\newtheorem{theorem}{Theorem}[section]
\newtheorem{definition}[theorem]{Definition}
\newtheorem{corollary}[theorem]{Corollary}
\newtheorem{proposition}[theorem]{Proposition}

\newcommand{\minusre}{\hspace{0.3em}\raisebox{0.3ex}{\sl \tiny /}\hspace{0.3em}}
\newcommand{\minusli}{\hspace{0.3em}\raisebox{0.3ex}{\sl \tiny $\setminus $}\hspace{0.3em}}
\newcommand{\lex}{\,\overrightarrow{\times}\,}

\newcommand{\Ker}{\mbox{\rm Ker}}

\newcommand{\Rad}{\mbox{\rm Rad}}

\newcommand{\Infinit}{\mbox{\rm Infinit}}

\newcommand{\RDP}{\mbox{\rm RDP}}

\begin{document}
\title[$\mathbb H$-perfect Pseudo MV-algebras]{$\mathbb H$-perfect Pseudo MV-algebras and Their Representations}
\author[Anatolij Dvure\v{c}enskij]{Anatolij Dvure\v censkij$^{1,2}$}
\date{}%
\maketitle
\begin{center}  \footnote{Keywords: Pseudo MV-algebra, $\ell$-group, unital $\ell$-group, strong unit, Riesz Decomposition Property, lexicographic product, $H$-perfect pseudo MV-algebra, strong $\mathbb H$-perfect pseudo MV-algebra, weak $\mathbb H$-perfect pseudo MV-algebra

 AMS classification: 03G12 81P15,  03B50

The paper has been supported by  Slovak Research and Development Agency under the contract APVV-0178-11, the grant VEGA No. 2/0059/12 SAV, and by
CZ.1.07/2.3.00/20.0051.
 }
Mathematical Institute,  Slovak Academy of Sciences,\\
\v Stef\'anikova 49, SK-814 73 Bratislava, Slovakia\\
$^2$ Depart. Algebra  Geom.,  Palack\'{y} University\\
17. listopadu 12, CZ-771 46 Olomouc, Czech Republic\\

E-mail: {\tt dvurecen@mat.savba.sk}
\end{center}

\begin{center}
\small{\it Dedicated  to Prof. Antonio Di Nola on the occasion of his $65^{th}$ birthday}
\end{center}

\begin{abstract}
We study $\mathbb H$-perfect pseudo MV-algebras, that is, algebras which can be split into a system of ordered slices indexed by the elements of an subgroup $\mathbb H$ of the group of the real numbers. We show when they can be represented as a lexicographic product of $\mathbb H$ with some $\ell$-group. In addition, we show also a categorical equivalence of this category with the category of $\ell$-groups.
\end{abstract}

\section{Introduction}

MV-algebras were introduced by Chang in \cite{Cha} in order to provide an algebraic counterparts of infinite-valued sentential calculus of \L ukasiewicz logic. Thanks to the celebrated Representation Theorem by Mundici \cite{Mun}, such algebras are always an interval in  Abelian $\ell$-groups with strong unit, see also e.g. \cite{CDM}. Recently, there appeared independently two non-commutative generalizations of MV-algebras, called pseudo MV-algebras by \cite{GeIo} and generalized MV-algebras by \cite{Rac}, which are both equivalent. The basic result on pseudo MV-algebras from \cite{151} says that every pseudo MV-algebra is an interval in a unital $\ell$-group with strong unit which is not necessarily Abelian.

A more general structure than MV-algebras is formed by effect algebras \cite{FoBe} which are partial algebras important for modeling quantum mechanical measurements. Such algebras are also sometimes an interval in Abelian partially ordered groups (po-groups) with strong unit. This is possible e.g. if the effect algebra has the Riesz Decomposition Property, \cite{Rav}. For more on effect algebras, see \cite{DvPu}. A noncommutative version of effect algebras, called pseudo MV-algebras, was presented in \cite{DvVe1, DvVe2}. Also under a stronger type of the Riesz Decomposition Property, such algebras are intervals in po-groups with strong unit which are not necessarily Abelian. It is important to note that every pseudo MV-algebra can be viewed also as a pseudo effect algebra satisfying RDP$_2$, see \cite{DvVe2}.

We recall that a {\it po-group} (= partially ordered group) is a
group $(G;+,0)$ (written additively) endowed with a partial order $\le$ such that if $a\le b,$ $a,b \in G,$ then $x+a+y \le x+b+y$ for all $x,y \in G.$  We denote by $G^+=\{g \in G: g \ge 0\}$ the {\it positive cone} of $G.$ If, in addition, $G$
is a lattice under $\le$, we call it an $\ell$-group (= lattice
ordered group).  An element $u\in G^+$ is said to be a {\it strong unit}
(= order unit) if given $g \in G,$ there is an integer $n\ge 1$ such
that $g \le nu,$ and the couple $(G,u)$ with a fixed strong unit $u$ is
said to be a {\it unital po-group} and a {\it unital $\ell$-group}, respectively. For more information on po-groups and $\ell$-groups and for unexplained notions, see \cite{Fuc, Gla}.

We say that an MV-algebra is perfect if every its element is either an infinitesimal or the negation of some infinitesimal. Therefore, they are mostly non Archimedean algebras. An important example of a perfect MV-algebra is the subalgebra of the Lindenbaum algebra  of the first order \L ukasiewicz logic generated by the class of formulas that are valid but non-provable, \cite{DDT}. Hence, perfect MV-algebras are directly connected with the very important phenomenon of incompleteness of the \L ukasiewicz first-order logic. Important results on perfect pseudo MV-algebras can be found in \cite{DiLe1} together with their equational characterization. This notion was extended also for effect algebras in \cite{177}.  Perfect pseudo MV-algebras were studied in \cite{Leu} and \cite{DDT}, where it was shown that such algebras are always of the form $\Gamma(\mathbb Z \lex G,(1,0)),$ where $G$ is an $\ell$-groups. This notion was generalized for the so-called $n$-perfect pseudo MV-algebras, \cite{Dv08}. Such algebras can be split into $n+1$ comparable slices, see e.g. \cite{DXY}. This notion was exhibited also for the case when a pseudo effect algebra can be split into a system of comparable slices indexed by the elements of a subgroup $\mathbb H$ of the group of real numbers $\mathbb R,$ see \cite{DvKo}. We note that the structure of perfect pseudo MV-algebras is very rich because there is uncountably many varieties of pseudo MV-algebras generated by the categories of perfect pseudo MV-algebras, see \cite{DDT}.

In the present paper, we study $\mathbb H$-perfect pseudo MV-algebras. We introduce so-called strong $\mathbb H$-perfect pseudo MV-algebras as algebras which can be represented as $\Gamma(\mathbb H \lex G, (1,0)),$ where $G$ is an $\ell$-group. We present also their categorical representation by the category of $\ell$-group. In addition, we introduce also weak $\mathbb H$-perfect pseudo MV-algebras as algebras which can be represented in the form $\Gamma(\mathbb H \lex G,(1,b)),$ where $b$ is a strictly positive element of an $\ell$-group $G.$

The paper is organized as follows.  Section 2 gathers elements of pseudo MV-algebras and pseudo effect algebras. Section 3 introduces $\mathbb H$-perfect pseudo MV-algebras. Section 4 deals with strong $\mathbb H$-perfect pseudo MV-algebras and it gives a representation theorem for such algebras. Section 5 shows a categorical equivalence of the category of strong $\mathbb H$-perfect pseudo MV-algebras with the category of $\ell$-groups.  Finally, Section 6 presents a representation of weak $\mathbb H$-perfect pseudo MV-algebras together with their categorical equivalence.

\section{Pseudo MV-algebras}

According to \cite{GeIo}, a {\it pseudo MV-algebras} ({\it PMV-algebra} for short) is an algebra $(M;
\oplus,^-,^\sim,0,1)$ of type $(2,1,1,$ $0,0)$ such that the
following axioms hold for all $x,y,z \in M$ with an additional
binary operation $\odot$ defined via $$ y \odot x =(x^- \oplus y^-)
^\sim $$
\begin{enumerate}
\item[{\rm (A1)}]  $x \oplus (y \oplus z) = (x \oplus y) \oplus z;$

\item[{\rm (A2)}] $x\oplus 0 = 0 \oplus x = x;$

\item[{\rm (A3)}] $x \oplus 1 = 1 \oplus x = 1;$

\item[{\rm (A4)}] $1^\sim = 0;$ $1^- = 0;$

\item[{\rm (A5)}] $(x^- \oplus y^-)^\sim = (x^\sim \oplus y^\sim)^-;$

\item[{\rm (A6)}] $x \oplus (x^\sim \odot y) = y \oplus (y^\sim
\odot x) = (x \odot y^-) \oplus y = (y \odot x^-) \oplus
x;$\footnote{$\odot$ has a higher priority than $\oplus$.}

\item[{\rm (A7)}] $x \odot (x^- \oplus y) = (x \oplus y^\sim)
\odot y;$

\item[{\rm (A8)}] $(x^-)^\sim= x.$
\end{enumerate}

For example, if $u$ is a strong unit of a (not necessarily Abelian)
$\ell$-group $G$,
$$\Gamma(G,u) := [0,u]
$$
and
\begin{eqnarray*}
x \oplus y &:=&
(x+y) \wedge u,\\
x^- &:=& u - x,\\
x^\sim &:=& -x +u,\\
x\odot y&:= &(x-u+y)\vee 0,
\end{eqnarray*}
then $(\Gamma(G,u);\oplus, ^-,^\sim,0,u)$ is a PMV-algebra
\cite{GeIo}.

(A6) defines the join $x\vee y$ and (A7) does the meet $x\wedge y.$ In addition, $M$ with respect to $\vee$ and $\wedge$ is a distributive lattice, \cite{GeIo}.

Let $(M; \oplus,^-,^\sim,0,1)$ be a PMV-algebra. Define a partial
binary operation $+$ on $M$ via: $x + y $ is defined iff $x \le
y^-$, and in this case
$$
 x+ y:= x \oplus y.\eqno(2.1)
$$

A PMV-algebra is an {\it MV-algebra} if $a\oplus b= b\oplus a$ for all $a,b \in M.$ We denote by $\mathcal{PMV}$ and $\mathcal{MV}$ the variety of pseudo MV-algebras and MV-algebras, respectively.

A PMV-algebra is said to be {\it symmetric} if $a^-=a^\sim$ for any $a \in M.$ We recall that a symmetric PMV-algebra is not necessarily an MV-algebra, see e.g. the PMV-algebra $M=\Gamma(\mathbb H \lex G,(1,g_0)),$ where $g_0 >0 $ is not from the {\it commutative center} $C(G):=\{x \in G: x+y=y+x,\ \forall\ y\in G\}.$ The class of all symmetric
PMV-algebras forms a variety, $\mathcal{SYM}$, which contains as a
proper subvariety the variety of all MV-algebras.

If $A$ is a non-void subset of a PMV-algebra $M,$ we set
$A^-:=\{a^-: a \in A\},$ $A^\sim:= \{a^\sim : a \in A\}$ and if $B$ is another non-void subset of $M,$ we write $A \leqslant B$ if $a\le b$ for all $a \in A$ and all $b \in B.$

An {\it ideal} of a PMV-algebra $M$ is any non-empty subset $I$ of $M$ such that (i) $a\le b \in I$ implies $a \in I,$ and (ii) if $a,b \in I,$ then $a\oplus b \in I.$
An ideal $I\ne M$ is said to be {\it maximal} if it is not a proper subset of another ideal $J \ne M;$ we denote by $\mathcal M(M)$ the set of maximal ideals of $M.$

According to \cite{DvVe1, DvVe2}, a partial algebraic structure
$(E;+,0,1),$  where $+$ is a partial binary operation and 0 and 1 are constants, is called a {\it pseudo effect algebra} ({\it PEA} for short)  if, for all $a,b,c \in E,$ the following hold.
\begin{itemize}
\item[{\rm (PE1)}] $ a+ b$ and $(a+ b)+ c $ exist if and only if $b+ c$ and $a+( b+ c) $ exist, and in this case,
$(a+ b)+ c =a +( b+ c).$

\item[{\rm (PE2)}] There are exactly one  $d\in E $ and exactly one $e\in E$ such
that $a+ d=e + a=1.$

\item[{\rm (PE3)}] If $ a+ b$ exists, there are elements $d, e\in E$ such that
$a+ b=d+ a=b+ e.$

\item[{\rm (PE4)}] If $ a+ 1$ or $ 1+ a$ exists,  then $a=0.$
\end{itemize}

If we define $a \le b$ if and only if there exists an element $c\in
E$ such that $a+c =b,$ then $\le$ is a partial ordering on $E$ such
that $0 \le a \le 1$ for any $a \in E.$ It is possible to show that
$a \le b$ if and only if $b = a+c = d+a$ for some $c,d \in E$. We
write $c = a \minusre b$ and $d = b \minusli a.$ Then

$$ (b \minusli a) + a = a + (a \minusre b) = b,
$$
and we write $a^- = 1 \minusli a$ and $a^\sim = a\minusre 1$ for any
$a \in E.$

If $(G,u)$ is a unital po-group, the set $\Gamma(G,u):=\{g\in G: 0\le g \le u\}$ endowed with the restriction of the group addition $+$ to $\Gamma(G,u)$ and $0,u$ is a pseudo effect algebra.

Let $x\in M$  and an integer $n\ge 0$  be given. We define
$$
0\odot x :=0,\quad  1\odot x:=x, \quad (n+1)\odot x := (n\odot x)\oplus x,
$$
$$
x^0:= 1, \quad x^1 := x,\quad x^{n+1} := x^n \odot x,
$$
$$
0x := 0,\quad 1 x :=x, \quad (n+1)x:= (nx)+ x,
$$
if $nx$ and $(nx)+x$ are defined in $M.$ An element $x$ is said to be an {\it infinitesimal} if $mx$ exists in $M$ for any integer $m \ge 1.$ We denote by $\Infinit(M)$ the set of all infinitesimals of $M.$

A non-empty subset $I$ of a PEA $E$ is said to be an {\it ideal} if (i) $a,b\in I,$ $a+b\in E,$ then $a+b \in I,$ and (ii) if $a\le b \in I,$ then $a \in I.$

We introduce the following types of the Riesz Decomposition properties of po-groups:

\begin{enumerate}

\item[(i)]
\RDP\  if, for all $a_1,a_2,b_1,b_2 \in G^+$ such that $a_1 + a_2 = b_1+b_2,$ there are four elements $c_{11},c_{12},c_{21},c_{22}\in G^+$ such that $a_1 = c_{11}+c_{12},$ $a_2= c_{21}+c_{22},$ $b_1= c_{11} + c_{21}$ and $b_2= c_{12}+c_{22};$

\item[(ii)]
\RDP$_1$  if it satisfies RDP and, for the elements $c_{12}$ and $c_{21},$ we have $0\le x\le c_{12}$ and $0\le y \le c_{21}$ imply  $x+y=y+x;$

\item[(iii)]
\RDP$_2$  if it satisfies RDP and, for the elements $c_{12}$ and $c_{21},$ we have $c_{12}\wedge c_{21}=0.$

\end{enumerate}

If $E$ is a pseudo effect algebra, we say that $E$ satisfies RDP (or RDP$_1$ or RDP$_2$) if in the later definition we change $G^+$ to $E.$ Then RDP$_2$ implies RDP$_1,$ and RDP$_1$ implies RDP; but the converse is not true, in general. A po-group $G$ satisfies \RDP$_2$ iff $G$ is an $\ell$-group, \cite[Prop 4.2(ii)]{DvVe1}.

The basic results on PMV-algebras and PEAs are the following representation theorems \cite{151} and \cite[Thm 7.2]{DvVe2}:

\begin{theorem}\label{th:2.1}
For any PMV-algebra $(M;\oplus,^-,^\sim,0,1)$,
there exists a unique $($up to iso\-morphism$)$ unital $\ell$-group
$G$ with a strong unit $u$ such that $(M;\oplus,^-,^\sim,0,1) \cong (\Gamma(G,u);\oplus,^-,^\sim,0,u)$. The
functor $\Gamma$ defines a categorical equivalence of the variety
of PMV-algebras with the category of unital $\ell$-groups.
\end{theorem}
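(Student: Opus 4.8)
The plan is to reduce the statement to the representation theory for pseudo effect algebras, using the partial operation $+$ of $(2.1)$ as the bridge. First I would verify that $(M;+,0,1)$, with $x+y$ defined precisely when $x\le y^-$ and equal to $x\oplus y$ there, is a pseudo effect algebra and that it satisfies $\RDP_2$. Axioms (PE1)--(PE4) should fall out of (A1)--(A8) once the partial subtractions are read in PMV-language through $a^-=1\minusli a$ and $a^\sim=a\minusre 1$; in particular (PE2) amounts to $a+a^\sim=1=a^-+a$, so the effect-algebra negations coincide with the given ones, and the order induced by $+$ agrees with the lattice order supplied by (A6)--(A7). The decisive structural input is $\RDP_2$, i.e.\ that a good decomposition can be chosen with $c_{12}\wedge c_{21}=0$; here I would lean on the distributivity of the underlying lattice of $M$ together with the conjugation identities (A5) and (A8).

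Once $M$ is presented as a pseudo effect algebra with $\RDP_2$, I would invoke the representation theorem for pseudo effect algebras recorded as \cite[Thm 7.2]{DvVe2} to obtain a unital po-group $(G,u)$ with $M\cong\Gamma(G,u)$ as pseudo effect algebras, where $G$ again satisfies $\RDP_2$ (the property passes from the interval to the whole positive cone because $u$ is a strong unit, so decompositions of elements of $G^+$ may be subdivided into decompositions inside $[0,u]$). The quoted equivalence that a po-group satisfies $\RDP_2$ if and only if it is an $\ell$-group \cite[Prop 4.2(ii)]{DvVe1} then upgrades $G$ to a unital $\ell$-group. It remains to confirm that the PMV-operations transport correctly, namely $x\oplus y=(x+y)\wedge u$, $x^-=u-x$ and $x^\sim=-x+u$. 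This is a short case analysis: when $x\le y^-$ the partial sum is defined and the two descriptions coincide by construction, while otherwise one checks $x\oplus y=u$ from (A3) and the lattice identities; the unary operations match because $1\minusli x$ and $x\minusre 1$ are exactly $u-x$ and $-x+u$ in $G$.

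For uniqueness I would exploit that $u$ is a strong unit: every element of $G^+$ is a finite sum of elements of $[0,u]=M$, and every element of $G$ is a difference of two such, so $M$ generates $G$ as a group. Consequently any two unital $\ell$-groups representing $M$ are linked by an $\ell$-group isomorphism fixing $M$ elementwise, and this isomorphism is forced to be unique by the same generation property.

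Finally, for the categorical equivalence I would define $\Gamma$ on morphisms by restricting unital $\ell$-homomorphisms, and build a quasi-inverse $\Psi$ sending each PMV-algebra to its representing unital $\ell$-group. The crux is an extension lemma: every PMV-homomorphism $h\colon M_1\to M_2$ lifts to a unique unital $\ell$-homomorphism $G_1\to G_2$, obtained by prescribing its value on sums and differences of elements of $M_1$ and checking that this is well defined via $\RDP_2$. Granting this, the composites $\Gamma\Psi$ and $\Psi\Gamma$ are naturally isomorphic to the identity functors in a routine way. I expect the main obstacle to be the verification of $\RDP_2$ for the induced pseudo effect algebra, since this is precisely the non-commutative phenomenon that forces $G$ to be an $\ell$-group and underlies both the extension lemma and the whole reduction.
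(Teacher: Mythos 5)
Your proposal cannot be checked against an in-paper proof, because the paper never proves Theorem \ref{th:2.1}: it is imported from \cite{151}, and the surrounding text only records the auxiliary facts your argument needs, namely Theorem \ref{th:2.2} and the statements quoted from \cite[Thms 8.3, 8.4, 8.8]{DvVe2} and \cite[Prop 4.2(ii)]{DvVe1}. Measured against that cited machinery, your route is exactly the documented bridge: pass to the pseudo effect algebra $(M;+,0,1)$ of (2.1), establish \RDP$_2$, represent it by Theorem \ref{th:2.2}, transfer \RDP$_2$ from the interval $\Gamma(G,u)$ to $G$, and conclude that $G$ is an $\ell$-group by \cite[Prop 4.2(ii)]{DvVe1}; the uniqueness and categorical-equivalence arguments via generation and the universal group are likewise the standard ones. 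Be aware, though, that the two steps you dispatch in a line each --- verifying \RDP$_2$ for $(M;+,0,1)$, and passing \RDP$_2$ from the interval to the whole positive cone --- are precisely the substantial content of \cite[Thms 8.3, 8.4]{DvVe2}, not routine consequences of distributivity and (A5), (A8); similarly, in your extension lemma you still must check that the lifted group homomorphism preserves the lattice operations of $G$, which does not follow merely from order preservation.

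There is one step that is false as written: the claim that if $x \not\le y^-$ then $x \oplus y = u$. This dichotomy holds only in totally ordered algebras. For a counterexample, take the MV-algebra $M=\Gamma(\mathbb Z^2,(1,1))$ with the coordinatewise order and $x=y=(1,0)$: then $y^-=(0,1)$, so $x\not\le y^-$, yet $x\oplus y=(2,0)\wedge(1,1)=(1,0)\ne (1,1)=u$. The correct reduction of the total $\oplus$ to the partial $+$ is the paper's formula (2.2): in any PMV-algebra, $x\oplus y=(y^-\minusli (x\wedge y^-))^\sim$, equivalently $x\oplus y=(x\wedge y^-)+y$, where the partial sum is always defined because $x\wedge y^-\le y^-$; indeed, writing $d=y^-\minusli (x\wedge y^-)$ one has $d+((x\wedge y^-)+y)=y^-+y=1$, so $(x\wedge y^-)+y=d^\sim$ by (PE2). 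This identity is \cite[Thm 8.8]{DvVe2}, already quoted in Section 2, and under the isomorphism $M\cong\Gamma(G,u)$ it evaluates to $(x\wedge (u-y))+y=(x+y)\wedge u$ because right translation by $y$ is a lattice automorphism of $G$. Replacing your case analysis by (2.2) repairs the transport-of-operations step, and with that change your argument goes through.
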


\begin{theorem}\label{th:2.2}
For every PEA $(E;+,0,1)$ with \RDP$_1,$ there is a unique unital po-group $(G,u)$ with \RDP$_1$\ such that $(E;+,0,1) \cong (\Gamma(G,u);+,0,u).$ The
functor $\Gamma$ defines a categorical equivalence of the category
of PEAs with the category of unital po-groups with \RDP$_1.$
\end{theorem}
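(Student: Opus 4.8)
The plan is to reduce the statement to the construction of a \emph{universal} unital po-group attached to $E$, after which the functor $\Gamma$ transports morphisms in the correct direction more or less formally. Following the strategy behind Mundici's theorem and its Riesz-decomposition extension to commutative effect algebras, I would first build the positive cone $G^+$ out of formal finite sequences of elements of $E$ and only then pass to the enveloping group. Let $W=\bigcup_{n\ge 0} E^n$ be the set of finite words $(a_1,\dots,a_n)$ over $E$, read as the formal sum $a_1+\cdots+a_n$ (which need not exist in $E$). Concatenation makes $W$ a free monoid; let $\approx$ be the monoid congruence generated by the relations $(a)\approx(b,c)$ whenever $a=b+c$ holds in $E$, together with $(0)\approx()$. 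Put $G^+:=W/\!\approx$, a monoid whose neutral element is the class of the empty word; the associativity axiom (PE1) is what guarantees that this quotient faithfully records the partial addition of $E$.

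The heart of the argument is to promote $G^+$ to the positive cone of a genuine po-group, and this is exactly where \RDP$_1$ (rather than mere \RDP) is indispensable. Applying \RDP\ on $E$ repeatedly one shows that any two words admit a common refinement, so $G^+$ itself satisfies the Riesz decomposition property and is upward directed. The crucial and most delicate point is \emph{cancellativity}: one must prove that $x+y=x+z$ forces $y=z$ in $G^+$, and symmetrically on the right. Here the extra clause of \RDP$_1$ --- that the two ``cross'' components $c_{12}$ and $c_{21}$ of a decomposition commute with everything dominated by them --- is precisely what excludes the pathologies a non-commutative refinement monoid could otherwise exhibit and lets one cancel a common summand by matching refinement matrices. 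Once $G^+$ is known to be cancellative and directed with \RDP$_1$, I would form $G$ as its group of left and right quotients: the two-sided Ore condition needed for this group to exist follows from directedness of $G^+$, while cancellativity makes the canonical map $G^+\to G$ injective.

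With $G$ in hand I would order it by declaring $G^+$ to be its positive cone, verify that the resulting order is translation invariant so that $(G;+,\le)$ is a po-group, and check that $G$ inherits \RDP$_1$ from $G^+$. The element $u:=[(1)]$, the class of the one-letter word built from the top $1$ of $E$, is then a strong unit, because any word can be refined into letters lying below $1$, which bounds its class by a multiple of $u$. The isomorphism $E\cong\Gamma(G,u)$ arises from $a\mapsto[(a)]$: injectivity and surjectivity onto $[0,u]$ follow from the refinement calculus, and compatibility of the partial addition of $E$ with the restricted group addition is immediate from the generating relations of $\approx$. Uniqueness of $(G,u)$ up to isomorphism then follows because any unital po-group with \RDP$_1$ representing $E$ must satisfy the same universal mapping property that $G$ does.

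For the categorical statement I would make that universal property explicit: every PEA-homomorphism $h\colon E\to\Gamma(H,v)$ into the interval of a unital po-group with \RDP$_1$ extends uniquely to a unital po-group homomorphism $G\to H$, since $h$ respects refinements and so factors first through $G^+$ and then through the group of quotients. This produces a functor from PEAs with \RDP$_1$ to unital po-groups with \RDP$_1$ that is left adjoint to $\Gamma$, and the two natural isomorphisms $\Gamma(G,u)\cong E$ and $(G_{\Gamma(H,v)},u)\cong(H,v)$ upgrade the adjunction to the asserted equivalence. The main obstacle throughout is the cancellativity-and-Ore step of the second paragraph: everything before it is bookkeeping with refinements and everything after it is formal, but the passage from a non-commutative refinement monoid to an honest group is just what \RDP$_1$ is designed to make possible, and carrying out that argument correctly is the crux of the proof.
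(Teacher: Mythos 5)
The first thing to note is that the paper itself does not prove Theorem \ref{th:2.2}: it is quoted as a known result, with the proof living in \cite[Thm 7.2]{DvVe2}. So your attempt has to be measured against that cited proof, and at the level of architecture you have reconstructed its strategy correctly: a word monoid over $E$, a refinement calculus driven by \RDP$_1$, cancellativity, embedding into a group of quotients with $u=[(1)]$ as strong unit, and a universal property yielding uniqueness and the categorical equivalence. That roadmap is the right one.

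As a proof, however, the proposal has genuine gaps, and they sit exactly where the mathematical content of the theorem lives. First, having defined $G^+$ as the quotient of the free monoid by the congruence \emph{generated} by $(a)\approx (b,c)$ for $a=b+c$, you can verify nothing about it (injectivity of $a\mapsto[(a)]$, surjectivity onto $[0,u]$, \RDP$_1$ for $G$) until you prove that two words are congruent if and only if they are linked by a single refinement matrix. This confluence (word-problem) lemma is not ``bookkeeping with refinements'': it is a Church--Rosser style induction in which the commutativity clause of \RDP$_1$ is what allows overlapping refinements to be reassembled, and it is precisely the point where plain \RDP\ fails. Your remark that (PE1) ``guarantees that the quotient faithfully records the partial addition'' is not an argument, and the sentence ``any two words admit a common refinement'' is false as stated --- only congruent words do; $(1)$ and $(1,1)$ have none. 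Second, cancellativity: you correctly single it out as the crux, but you offer only the heuristic that \RDP$_1$ ``excludes the pathologies,'' whereas in the actual proof this is a substantial chain of refinement lemmas; asserting the key lemma is not proving it. Third, the uniqueness argument is circular as written: to conclude that \emph{any} unital po-group $(H,v)$ with \RDP$_1$ and $\Gamma(H,v)\cong E$ coincides with your $G$, you must first show $(H,v)$ enjoys the same universal property, which requires proving that $H^+$ is generated by $[0,v]$ (every positive element splits, via the strong unit and \RDP, into a sum of elements below $v$) and that measures extend --- this is part of the theorem, not a formality. A smaller point: the Ore condition does not follow from ``directedness'' in the abstract; what makes it work is that every word is a left and right divisor of a power of $u$, using $a+a^\sim=1=a^-+a$ repeatedly.
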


In \cite[Thm 8.3, 8.4]{DvVe2}, it was proved that if $(M; \oplus,^-,^\sim,0,1)$ is a PMV-algebra, then $(M;+,0,1),$ where $+$ is defined by (2.1), is a pseudo effect algebra with RDP$_2.$  Conversely, if $(E; +,0,1)$ is a pseudo effect algebra with RDP$_2,$ then $E$ is a lattice, and by \cite[Thm 8.8]{DvVe2}, $(E; \oplus,^-,^\sim,0,1),$ where
$$
a\oplus b := (b^-\minusli (a\wedge b^-))^\sim,\eqno(2.2)
$$
is a PMV-algebra. In addition, a PEA $E$ has RDP$_2$ iff $E$ is a lattice and $E$ satisfies RDP$_1,$ see \cite[Thm 8.8]{DvVe2}.

We note that if $M$ is a PMV-algebra, then the notion of an ideal of an PMV-algebra $M$ coincides with the notion of an ideal taken in the PEA $M$  with $+$ defined by (2.1).

Let $A$ and $B$ be two non-void subsets of a PMV-algebra $M,$ we set (i) $A\oplus B:=\{a\oplus b: a \in A, b \in B\},$ (ii) $A + B = \{a+b: $ if $a+b$ exists in $M$ for $a\in A,\ b \in B\}.$ We say that $A+B$ is {\it defined} in $M$ if $a+b$ exists in $M$  for any $a \in A$ and any $b \in B.$

An ideal $I$ of $M$ is normal if $x\oplus I=I\oplus x$ for any $x \in M;$ let $\mathcal N(M)$ be the set of normal ideals of $M.$  There is a one-to-one correspondence between normal ideals and congruences for PMV-algebras, \cite[Thm 3.8]{GeIo}. The quotient PMV-algebra over a normal ideal $I,$  $M/I,$ is defined as the set of all elements of the form $x/I := \{y \in M :
x\odot y^- \oplus y\odot x^- \in I\},$ or equivalently, $x/I := \{y \in M : x^\sim \odot y \oplus y^\sim \odot x \in I\}.$

We can define a maximal ideal of a PEA $E$ in the same way as for PMV-algebras, and an ideal $I$ of $M$ is {\it normal} if $x+I=I+x$ for any $x\in M.$ We note the normality of an ideal of an PMV-algebra $M$ is the same as that for the PEA $M$ with $+$ determined by (2.1).

We define (i) the {\it radical} of a PMV-algebra $M$,
$\mbox{Rad}(M),$ as the set
$$
\mbox{\rm Rad}(M) = \bigcap\{I:\ I \in {\mathcal M}(M) \},
$$
and (ii) the {\it normal radical} of $M$, $\mbox{Rad}_n(M)$, via
$$
\mbox{Rad}_n(M) = \bigcap\{I:\ I \in {\mathcal N}(M) \cap {\mathcal M}(M)\}.
$$

By \cite[Prop. 4.1, Thm 4.2]{DDJ}, it is possible to show that

$$
\mbox{\rm Rad}(M) \subseteq \mbox{\rm Infinit}(M) \subseteq
\mbox{\rm Rad}_n(M).\eqno(2.3)
$$

Finally, we say that a mapping $s: M\to [0,1]$ is a {\it state} on a PMV-algebra $M$ if (i) $s(1)=1,$ and (ii) $s(a+b)=s(a)+s(b)$ whenever $a+b$ is defined in $M.$ We say that a state $s$ is {\it extremal} if from $s=\lambda s_1 +(1-\lambda)s_2,$ where $s_1, s_2$ are states on $M$ and $\lambda$ is a real number such that $0<\lambda <1,$ it follows $s=s_1=s_2.$ We denote by $\mathcal S(M)$ and $\partial_e\mathcal S(M)$ the set of all states and the set of all extremal states on $M,$ respectively. If $M$ is an MV-algebra, $\mathcal S(M)$ is always a non-void set. But if $M$ is a PMV-algebra, it can happen that $M$ is stateless, see e.g. \cite{DDJ, 156, DvHo}. The set $\Ker(s)=\{a \in M: s(a)=0\},$ the {\it kernel} of $s,$ is a normal ideal. A state $s$ is extremal iff $\Ker(s)$ is a maximal ideal, and conversely, every maximal and normal ideal is a kernel of a unique extremal state, see \cite{156}. In addition, a state $s$ is extremal iff $s(a\wedge b) = \min\{s(a),s(b)\},$ $a,b \in M,$ \cite[Prop 4.7]{156}.

A state on a unital $\ell$-group $(G,u)$ is a mapping $s: G \to \mathbb R$ such that (i) $s(G^+) \subseteq \mathbb R^+,$ (ii) $s(g_1+g_2) = s(g_1)+ s(g_2)$ for all $g_1,g_2 \in G,$ and (iii) $s(u)=1.$  There is a one-to-one correspondence between the states on $(G,u)$ and $\Gamma(G,u);$ every state on $\Gamma(G,u)$ can be extended to a unique state on $(G,u),$ see \cite{156}.

\section{$\mathbb H$-perfect PMV-algebras}

From this section, $\mathbb H$ will denote a subgroup of the group of real numbers $\mathbb R$ such that $1 \in \mathbb H.$ The main aim of this section is to introduce and study PMV-algebras which can be split into a family of comparable slices indexed by the elements of the subgroup $\mathbb H.$ Such prototypical examples are PMV-algebras represented in the form

$$\Gamma(\mathbb H \lex G,(1,0)),\eqno(3.1)
$$
where $G$ is any $\ell$-group, and $\mathbb H\lex G$ denotes the {\it lexicographic product} of $\mathbb H$ with $G;$ we note that in such a lexicographic product, the order $\le$ is defined as follows: $(h_1,g_1)\le (h_2,g_2)$ iff either $h_1 < h_2$ or $h_1=h_2$ and $g_1\le g_2.$ It is clear that the element $u=(1,0)$ is a strong unit for $\mathbb H \lex G$ and (3.1) defines a PMV-algebra.

A very special case is when $G=O,$ where $O$ is the zero $\ell$-group, because then $\Gamma(\mathbb H\lex O,(1,0))$ is isomorphic to the Archimedean MV-algebra $\Gamma(\mathbb Z,1).$ In general, if $G \ne O,$ (3.1) does not give an Archimedean PMV-algebra.

By $\mathbb Q$ we denote the group of rational numbers in $\mathbb R,$ $\mathbb Z$ denotes the group of integers, and given an integer $n\ge 1,$ $\frac{1}{n}\mathbb Z:=\{\frac{i}{n}: i \in \mathbb Z\}.$ By \cite[Lem 4.21]{Goo}, every $\mathbb H$ is either {\it cyclic}, i.e. $\mathbb H=\frac{1}{n}\mathbb Z$ for some $n \ge 1$ or $\mathbb H$ is dense in $\mathbb R.$

For example, if $\mathbb H= \mathbb H(\alpha)$ is a subgroup of $\mathbb R$ generated by $\alpha \in [0,1]$ and $1,$ then $\mathbb H = \frac{1}{n}\mathbb Z$ for some integer $n\ge 1$ if $\alpha$ is a rational number. Otherwise, $\mathbb H(\alpha)$ is countable and dense in $\mathbb R,$ and $M(\alpha):= \Gamma(\mathbb H(\alpha),1)=\{m+n\alpha: m,n \in \mathbb Z,\ 0\le m+n\alpha \le 1\},$ see \cite[p. 149]{CDM}. In addition, $\{\mathbb H(\alpha): \alpha \in(0,1)\}$ is an uncountable system of non-isomorphic subgroups of $\mathbb R.$

We set $[0,1]_\mathbb H:=[0,1]\cap \mathbb H.$

\begin{definition}\label{de:3.1}
{\rm We say that a PMV-algebra $M$ is $\mathbb H$-{\it perfect}, if there is a system $(M_t: t \in [0,1]_\mathbb H)$ of nonempty subsets of $M$ such that it is an $\mathbb H$-{\it decomposition} of $M,$ i.e. $M_s \cap M_t= \emptyset$ for $s<t,$ $s,t \in [0,1]_\mathbb H$ and $\bigcup_{t\in [0,1]_\mathbb H} = M$ and \begin{enumerate}

\item[{\rm (a)}]
$M_s \leqslant M_t$ for all $ s<t$, $s,t \in [0,1]_\mathbb H,$

\item[{\rm (b)}]
$M_t^- = M_{1-t} =M_t^\sim$ for any $t \in [0,1]_\mathbb H.$

\item[{\rm (c)}]  if $x \in M_v$ and $y \in M_t$, then
$x\oplus y \in M_{v\oplus t},$ where $v\oplus t=\min\{v+t,1\}.$

\end{enumerate}}
\end{definition}

We recall that if $\mathbb H=\frac{1}{n}\mathbb Z,$ a $\frac{1}{n}\mathbb Z$-perfect PMV-algebra is said to be $n$-{\it perfect}, for more details on $n$-perfect PMV-algebras, see \cite{Dv08}.

For example, let $M =\Gamma(\mathbb H\lex G, (1,0)).$ We set $M_0=\{(0,g): g \in G^+\},$ $M_1:=\{(1,-g): g \in G^+\}$ and
for  $t \in [0,1]_\mathbb H\setminus \{0,1\},$ we define $M_t:=\{(t,g): g \in G\}.$ Then $(M_t: t \in [0,1]_\mathbb H)$ is an $\mathbb H$-decomposition of $M$ and $M$ is an $\mathbb H$-perfect PMV-algebra.

Sometimes we will write also $M=(M_t: t \in [0,1]_\mathbb H)$ for $\mathbb H$-perfect PMV-algebras.

We say that a state $s$ on a PMV-algebra $M$ is an $\mathbb H$-{\it valued state} if $s(M)=\mathbb H.$   If $s(M) \subseteq [0,1]_\mathbb H,$ we say that $s$ is an $\mathbb H$-{\it state}.  In particular, if $\mathbb H = \frac{1}{n}\mathbb Z,$  a $\frac{1}{n}\mathbb Z$-valued state is also said to be an $(n+1)$-{\it valued discrete state}, \cite{DXY}.

The basic properties of $\mathbb H$-perfect PMV-algebras are described as follows.

\begin{theorem}\label{th:3.2}
Let $M =(M_t: t \in [0,1]_\mathbb H)$ be an  $\mathbb H$-perfect PMV-algebra.

\begin{enumerate}

\item[{\rm (i)}] Let $a \in M_v,$  $b \in M_t$. If $v+t < 1$, then
$ a+b $ is defined in $M$ and $a+b \in M_{v+t}$; if $a+b$ is defined
in $M$, then $v+t\le 1.$

\item[{\rm (ii)}]
$M_v + M_t$ is defined in $M$ and $M_v + M_t = M_{v+t}$ whenever $v+t < 1.$

\item[{\rm (iii)}] If $a \in M_v$ and $b \in M_t$, and
$v+t > 1$, then $a+b$ is not defined in $M.$

\item[{\rm (iv)}]
$M$ admits a unique state. This state is an $\mathbb H$-valued state such that
$s(M_t) = \{t\}$ for each $t \in [0,1]_\mathbb H.$ Then $M_t =
s^{-1}(\{t\})$ for any $t \in [0,1]_\mathbb H,$ and $s$ is an $\mathbb H$-valued state such that $\Ker(s)=M_0.$

\item[{\rm (v)}]  $M_0$ is a normal and maximal ideal
of $M$ such that $M_0 + M_0 = M_0.$

\item[{\rm (vi)}] $M_0$ is a unique maximal ideal of
$M$,   and $M_0= \Rad(M) = \Infinit(M).$

\item[{\rm (vii)}]
Let $M = (M'_t: t \in [0,1]_\mathbb H)$
be another representation of $M$ satisfying {\rm (a)--(c)} of Definition {\rm \ref{de:3.1}}, then
$M_t = M_t'$ for each $t \in [0,1]_\mathbb H.$

\item[{\rm (viii)}] The quotient PMV-algebra $M/M_0\cong \Gamma(\mathbb H,1).$

\end{enumerate}
\end{theorem}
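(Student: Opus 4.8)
The plan is to prove the eight parts roughly in the order listed, since the later parts lean on the earlier ones. The whole argument rests on the defining conditions (a)--(c) of an $\mathbb H$-decomposition together with the correspondence $+$ versus $\oplus$ from (2.1): recall that $a+b$ is defined exactly when $a\le b^-$, and then it equals $a\oplus b$.

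\textbf{Parts (i)--(iii) (arithmetic of slices).} First I would establish (i). If $a\in M_v$ and $b\in M_t$ with $v+t<1$, then by (b) we have $b^-\in M_{1-t}$, and since $v<1-t$ condition (a) gives $a\le b^-$, so $a+b$ is defined. By (c), $a\oplus b\in M_{v\oplus t}=M_{\min\{v+t,1\}}=M_{v+t}$, and since $a+b=a\oplus b$ this lands in $M_{v+t}$. For the converse direction of (i), if $a+b$ is defined then $a\le b^-\in M_{1-t}$, so $a$ cannot lie in a slice strictly above $M_{1-t}$; by the disjointness and (a) this forces $v\le 1-t$, i.e. $v+t\le 1$. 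Part (ii) is then immediate: for $v+t<1$ the inclusion $M_v+M_t\subseteq M_{v+t}$ is (i), and for the reverse inclusion I pick any $c\in M_{v+t}$ and must write it as $a+b$; here I would use the PMV/PEA structure (e.g. choose $a\in M_v$ and set $b=a\minusre c$, checking via (i) applied to the difference that $b\in M_t$). Part (iii) is the contrapositive packaging of the second half of (i): if $v+t>1$ and $a+b$ were defined, (i) would force $v+t\le 1$, a contradiction.

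\textbf{Parts (iv)--(vi) (the canonical state and the radical).} For (iv) I would \emph{define} $s:M\to\mathbb H$ by $s(x)=t$ for $x\in M_t$; this is well-defined by disjointness, satisfies $s(1)=1$, and is additive by (i)/(ii). Uniqueness of the state follows because any state $s'$ must send $M_0$ into the infinitesimals' kernel and must respect the slice arithmetic, pinning $s'(M_t)=\{t\}$; then $\Ker(s)=s^{-1}(0)=M_0$. Part (v): that $M_0$ is an ideal follows from $M_0+M_0=M_0$ (which is (i) with $v=t=0$) and downward closure via (a); normality and maximality follow since $M/M_0$ is the two-element-type quotient forced by $s$. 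For (vi) I would invoke the chain (2.3), $\Rad(M)\subseteq\Infinit(M)\subseteq\Rad_n(M)$, and show all three collapse to $M_0$: since $M_0$ is maximal and normal it contains $\Rad_n(M)$ from above, while every element of $M_0$ is infinitesimal because $m\cdot x$ stays in $M_0$ by (i), giving $M_0\subseteq\Infinit(M)$; squeezing gives equality, and $M_0$ being the unique maximal ideal then follows.

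\textbf{Parts (vii)--(viii) (uniqueness of the decomposition and the quotient).} For (vii), the key observation is that the decomposition is \emph{recovered from the state}: by (iv), $M_t=s^{-1}(\{t\})$, and since $s$ is unique (independent of which decomposition produced it), any second decomposition $(M'_t)$ yields the same state and hence $M'_t=s^{-1}(\{t\})=M_t$. Finally (viii): the quotient $M/M_0$ inherits an $\mathbb H$-valued state that is now injective on slices (each slice collapses to a point), so the induced map $M/M_0\to\Gamma(\mathbb H,1)$ sending $x/M_0\mapsto s(x)$ is a well-defined PMV-homomorphism; I would check it is bijective using that the slices are indexed exactly by $[0,1]_\mathbb H$ and that the $\oplus$ on representatives matches truncated addition in $\mathbb H$ by (c). \textbf{The main obstacle} I anticipate is part (ii)'s reverse inclusion and the well-definedness/additivity of $s$ in (iv): proving additivity requires care when $v+t=1$ (the boundary case where $+$ may fail to be defined), so I would treat the truncation $v\oplus t$ and the partial operation $+$ in tandem rather than conflating them, leaning on (2.1) to translate between $\oplus$ and $+$ at every step.
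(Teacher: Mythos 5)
Your treatment of (i)--(iii) is sound and essentially the paper's own argument (for the reverse inclusion in (ii) the paper fixes $x\in M_v$ and writes $z=(z\minusli x)+x$, you write $c=a+(a\minusre c)$; same idea, and both versions share the same unstated care needed at $t=0$), and your outlines of (vii) and (viii) track the paper as well. The genuine gap is in (iv)--(vi), which is exactly where the paper spends nearly all of its effort. You assert that any state $s'$ ``must respect the slice arithmetic, pinning $s'(M_t)=\{t\}$.'' That is the crux of the whole theorem, not a formality: a state is only additive on the \emph{partial} operation $+$, and nothing in Definition 3.1 forces a state to be constant on a slice $M_t$ a priori, so $s'(M_t)=\{t\}$ requires a real argument. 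The paper obtains uniqueness by a completely different route: it first proves that $M_0$ is a \emph{maximal} ideal --- the longest part of the proof, with separate arguments for $\mathbb H$ cyclic and $\mathbb H$ dense (in the dense case one takes $t_i\searrow 0$, elements of $M_{t_i}$, and maximal multiples $y_i$ with $s(y_i^-)<t$ to show the ideal generated by $M_0$ and any $x\notin M_0$ is all of $M$) --- then that $M_0$ is the \emph{unique} maximal ideal, and only then invokes the one-to-one correspondence $s\leftrightarrow \Ker(s)$ between extremal states and maximal normal ideals to conclude the state is unique. Your proposal has no substitute for this: maximality of $M_0$ is justified by ``$M/M_0$ is the two-element-type quotient forced by $s$,'' which is wrong as stated ($M/M_0\cong\Gamma(\mathbb H,1)$, far from two-element unless $\mathbb H=\mathbb Z$) and circular (forming the quotient already presupposes normality, and simplicity of the quotient is what maximality \emph{means}, so it cannot be cited as its cause); and uniqueness of the maximal ideal in (vi) is claimed with ``then follows'' and no argument. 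Since your (vii) rests on uniqueness of the state, the gap propagates there too.

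The gap is repairable, but only by adding genuine content. Either follow the paper: prove maximality of $M_0$ directly (in fact a shortcut avoids the paper's case split: if $x\in M_t\cap I$ with $t>0$ for an ideal $I$, then $M_0\subseteq I$ by (a) and downward closure, $k\odot x\in I$ by $\oplus$-closure, and by (c) $k\odot x\in M_1$ once $kt\ge 1$, whence $1=(k\odot x)\oplus(k\odot x)^{\sim}\in I$; this gives maximality and uniqueness of the maximal ideal at once), then use the cited extremal-state/normal-maximal-ideal correspondence. Or prove your pinning claim honestly: show $s'(M_0)=\{0\}$ via infinitesimals, set $\alpha_v=\sup s'(M_v)$ and $\beta_v=\inf s'(M_v)$, use \emph{both} inclusions of (ii) to get $\alpha_{v+t}=\alpha_v+\alpha_t$ and $\beta_{v+t}=\beta_v+\beta_t$ for $v+t<1$, use $x^-+x=1$ to get $\alpha_{1-v}=1-\beta_v$, and then exploit monotonicity together with the cyclic/dense dichotomy for $\mathbb H$ to force $\alpha_t=\beta_t=t$. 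As written, however, the central claims of (iv)--(vi) are asserted rather than proved.
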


\begin{proof}
(i)  Assume $a\in M_v$ and $b \in M_t$ for
$v+t< 1$. Then $b^- \in M_{1-t}$, so that $a \le b^-$, and $a+b$ is
defined in $M.$  Conversely, let $a+b$ be defined, then $a\le b^-\in
M_{1-t}$ which gives $v+t\le 1.$

(ii) By (i), we have  $M_v +M_t \subseteq M_{v+t}$.
Suppose $z \in M_{v+t}$.  Then, for any $x\in M_v,$ we have $x \le z$,
and hence $y = z\minusli x$ is defined in $M$, and $y \in M_w$ for
some $w \in [0,1]_\mathbb H.$ Since $z = y+x \in M_{v+t}\cap M_{v+w}$,  we conclude $t=w$ and $M_{v+t} \subseteq M_v + M_t.$

(iii)  If $a+b \in M$, then $a \le b^- \in M_{1-t} \leqslant M_v$ which
gives $a\le b^- \le a$, that is, $a = b^-$. This is possible only if
$v = 1-t$ which is impossible.

(iv)--(vi) Define a mapping $s:\ M \to [0,1]$ by $s(x) = t$ if $x \in M_t$.  It is clear that $s$ is a well-defined mapping.  Take $a,b \in M$ such
that $a+ b$ is defined in $M$.  Then there are unique indices $v$ and $t$ such that $a\in M_v$ and $b \in M_t$. By (i), $v+t \le 1$
and  $a+b \in M_{v+t}$. Therefore, $s(a+b) = v+t  = s(a) +
s(b).$ It is evident that $s(1) = 1,$ $\Ker(s)=M_0,$ and $M_t =s^{-1}(\{t\})$ for $t \in [0,1]_\mathbb H.$ In particular, $M_0$ is a normal ideal of $M.$

\vspace{2mm}
{\it Maximality of $M_0.$} Take $x \in M_t \setminus M_0,$ where $0 < t <1,$ $t \in [0,1]_\mathbb H.$ Let $I$ be an ideal of $M$ generated by $M_0$ and $x.$
Then, for every $v <t,$ $s \in [0,1]_\mathbb H,$ we have $M_v \leqslant  M_t,$ whence $M_v \subseteq I.$ There are two cases: (a) there is no $v\in [0,1]_\mathbb H$ such that $0<v<t.$ Then $t= 1/n$ for some integer $n\ge 1$ and $\mathbb H =\frac{1}{n}\mathbb Z.$  If $n=1$, then $s(x)=1,$  $s(x^-) =0,$ and $x^- \in M_0.$ Hence, $1 \in I.$

If $n \ge 2,$ then $y:=(n-1)x$ is defined in $M,$ and $y \in I.$ For the element $y^-,$ we have $s(y^-) = 1/n,$ so that $y^- \in I$ which means $1 \in I.$

(b) $\mathbb H$ is no cyclic subgroup of $\mathbb R,$ so that it is dense in $\mathbb R.$ There is a strictly decreasing sequence $\{t_i\}$ of non-zero elements of $[0,1]_\mathbb H$ such that $t_i\searrow 0.$ For every $t_i,$ there is a maximal integer $m_i$ such that $y_i:=m_i t_i$ is defined in $M.$ Hence, for enough small $t_i$, $s(y_i^-) <t$ so that $y_i^- \in I$ which again proves $I=M,$ and $M_0$ is a maximal ideal.

\vspace{2mm}
{\it Uniqueness of a maximal ideal.} Assume that $I$ is another maximal ideal of $M.$  Let there be $x \in M_t \cap I$ for some $t \in [0,1]_\mathbb H,$ $t>0.$ Then, for every $z \in M_0,$ we have $z \le x$ and $z \in I,$ so that $M_0 \subseteq I.$ The maximality of $M_0$ yields $M_0=I.$

Since there is a one-to-one correspondence between extremal states and maximal ideals which are also normal given by $s \leftrightarrow \Ker(s),$ \cite[Prop 4.3-4.6]{156}, we see that $M$ has a unique state, this state is extremal and an $\mathbb H$-valued state.

\vspace{2mm}
Finally, we show $M_0 =\Infinit(M).$  Since $M_0+M_0=M_0,$ we have $M_0 \subseteq \Infinit(M).$ Let $x \in \Infinit(M).$ Then $mx$ exists in $M$ for any integer $m\ge 1.$ Hence, $s(mx)=ms(x)\le 1$ which gives $s(x)=0$ and $x \in \Ker(s)=M_0.$ From (2.3), we conclude $M_0= \Rad(M) = \Infinit(M).$

(vii)  If $M = (M_t': t \in [0,1]_\mathbb H)$ is another representation of $M$, then by (iv), $M$ admits a state $s'$ such that $M_t' =
s'^{-1}(\{t\})$ for any $t \in [0,1]_\mathbb H.$  Since $M$ admits a unique state, $s = s'$ and $M_t = M_t'$ for each $t \in [0,1]_\mathbb H.$

(viii)  By (iv), there is a (unique extremal) state $s$ on $M$ such that $\Ker(s)=M_0.$ Then $a \sim b$ iff $s(a)=s(a\wedge b)=s(b).$ Since $s$ is an extremal state, $\Ker(s)$ is a maximal ideal and normal. Hence, $M/\Ker(s) =[0,1]_\mathbb H=\Gamma(\mathbb H,1).$
Hence, $M/M_0 \cong \Gamma(\mathbb H,1).$
\end{proof}

In the rest of this section, we will study some varieties of PMV-algebras generated by $\mathbb H$-perfect PMV-algebras. We show that there are two important cases depending on whether $\mathbb H$ is a cyclic or non-cyclic subgroup of $\mathbb R.$ We note that the cyclic case was studied in \cite{225}.

If $\mathcal K$ is a family of PMV-algebras, we denote by $\mathcal{V(K)}$ the variety of PMV-algebras generated by $\mathcal K.$ If $\mathcal K=\{K\},$ we denote simply $\mathcal V(K):=\mathcal{V(K)}.$

To show these varieties, we introduce so-called top varieties of PMV-algebras, see \cite{DvHo}. The basic tool in our considerations is Theorem \ref{th:2.1}. In particular, it entails a one-to-one correspondence between the
set of ideals, normal ideals, maximal ideals of $M = \Gamma(G,u)$,
and the set of convex $\ell$-subgroups, ${\mathcal C}(G)$,
$\ell$-ideals, ${\mathcal L}(G)$, and maximal convex $\ell$-subgroups,
${\mathcal M}(G)$, of $(G,u)$, see \cite{156}; the one-to-one mapping
$\psi:\ {\mathcal I}(M) \to {\mathcal C}(G)$ is defined by
$$
\psi(I)= \{x\in G:\ \exists \ x_i, y_j \in I,\ \ x = x_1 +\cdots +
x_n - y_1- \cdots - y_m\}.\eqno(3.2)
$$

Let $M = \Gamma(G,u)$ be a PMV-algebra, where $(G,u)$ is a unital
$\ell$-group. By a {\it value} of $u$ in $(G,u)$ we mean a convex
$\ell$-subgroup  $H$ of $(G,u)$ maximal under condition $H$ does not
contain $u$. Hence, $\psi^{-1}(H)$ is a maximal ideal of $M$, where
$\psi$ is defined by (3.2),  and vice versa. If $I$ is a maximal
ideal of $M$, then $\psi(I)$ is a value of $u$ in $(G,u)$.

For any value $V$ of $(G,u)$, we set
$$K(V) = \bigcap_{g \in G} g^{-1}Vg
$$
(for a moment we use a multiplicative form of $(G,u)$).
 Then $K(V)$ is a normal convex $\ell$-subgroup of $(G,u)$
contained in $V$, and $(G/K(V), G/V)$ is a primitive transitive
$\ell$-permutation group called a {\it top component} of $G$.

Let ${\mathcal  V}$ be a variety of PMV-algebras and let
$\Gamma^{-1}({\mathcal  V}) =\{(G,u):\ \Gamma(G,u) \in {\mathcal  V}\}.$ We
recall that ${\mathcal  V}$ contains  a trivial  PMV-algebra (i.e.
$0=1$). Then by \cite[Thm 3.1]{DvHo}, $\Gamma^{-1}({\mathcal  V})$ is an
equational class of unital $\ell$-groups in some extended sense:
$\Gamma^{-1}({\mathcal  V})$ is not a variety in the usual sense of
universal algebra, but rather a class of unital $\ell$-groups
described by equations in the language of unital $\ell$-groups.

Let
$$ {\mathcal  T}({\mathcal  V}) =\{\Gamma(G,u):\ \Gamma(G/K(V),u/K(V)) \in
{\mathcal  V},\ V\in {\mathcal  M}(G) \} \cup \{\{0\}\}.\eqno(3.3)
$$
By  \cite[Cor. 4.3]{DvHo}, ${\mathcal  T}({\mathcal  V})$ is a variety, we
call it a {\it top variety} of ${\mathcal  V}.$

We denote by ${\mathcal  M}$   the set of PMV-algebras $M$ such that
either every maximal ideal of $M$ is normal or $M$ is trivial. In
\cite[(6.1)]{DDT}, there was shown that ${\mathcal  M}$ is a variety such
that

$$ {\mathcal  M} = {\mathcal  T}(\mathcal  {MV}) = {\mathcal  T}({\mathcal  N}) = {\mathcal  T}({\mathcal
M}),\eqno(3.4)
$$
where $\mathcal{MV},$ as it was already mentioned,  is the variety of MV-algebras and $\mathcal  N$ is the
set of normal-valued PMV-algebras, which according to \cite[Thm 6.8]{156}
is a variety. (We recall that a {\it value} of any non-zero element $b\in M$ is any ideal $I$ of $M$ maximal under the condition $b \not\in I.$ The ideal $I^*$ generated by $I$ and $b$ is said to be a {\it cover} of $I$ and we say that $I$ is normal in its cover if $x\oplus I=I\oplus x$ for any $x\in I^*.$ Finally, we say that $M$ is {\it normal-valued} if every value is normal in its cover.)

We recall that according to Theorem \ref{th:2.1}, it is possible to show that a PMV-algebra $M=\Gamma(G,u)$ is symmetric iff $u \in C(G),$ \cite[p. 98]{187}.

Let $\mathbb H$ be a subgroup of $\mathbb R$ such that $1\in \mathbb H.$
We define $\mathcal{PPMV}_\mathbb H$, the system of $\mathbb H$-perfect PMV-algebras
($\mathcal{PPMV}_\mathbb H^S$ symmetric $\mathbb H$-perfect PMV-algebras),
${\mathcal V}(\mathcal{PPMV}_\mathbb H)$, the variety generated by all $\mathbb H$-perfect PMV-algebras,  and  $\mathcal {BP}_\mathbb H$ (and $\mathcal{SBP}_\mathbb H$), the system
of (symmetric) PMV-algebras $M$ such that either every maximal ideal
of $M$ is normal and every extremal state of $M$ an $\mathbb H$-state or $M$ is the one-element PMV-algebra. Or equivalently, either every maximal ideal $I$ of $M$ is normal and $M/I$ is a subalgebra of $\Gamma(\mathbb H,1).$

If $\mathbb H=\frac{1}{n}\mathbb Z,$ instead of $\mathcal{PPMV}_\mathbb H,$ $\mathcal{BP}_\mathbb H$ and $\mathcal{SBP}_\mathbb H,$ we write according to \cite{Dv08}, $\mathcal{PPMV}_n,$ $\mathcal{BP}_n$ and $\mathcal{SBP}_n,$ respectively.

In such a case, $\mathcal{BP}_n$ consists of all PMV-algebras $M$ such that every maximal ideal is normal and every extremal state is  $(k+1)$-valued,
where $k$ divides $n,$ or $M$ is the one-element PMV-algebra. Or
equivalently, either every maximal ideal $I$ of $M$ is normal and
$M/I \cong \Gamma(\mathbb Z,k)$ where $k|n,$ or $M=\{0\}$.  It is
clear that $\mathcal{BP}_1 = \mathcal {BP},$ and $\mathcal {SBP}_1 = \mathcal
{SBP},$ where $\mathcal {BP}$ and $\mathcal {SBP}$ were studied in
\cite{DDT}. We have $\mathcal {BP}_m \subseteq \mathcal {BP}_n$ iff $m|n$.
If $n$ is prime, then $\mathcal {BP}_n$ is of particular interest.

In  \cite[Cor. 11]{DiLe2}, there is  presented a characterization of
MV-algebras which are members of the variety ${\mathcal V}({\mathcal
M}_n(\mathbb Z))$, that is, the variety generated by the MV-algebra
$\Gamma(\frac{1}{n}\mathbb Z\lex   \mathbb Z, (1,0))$. They showed that the
variety ${\mathcal V}({\mathcal M}_n(\mathbb Z))$ is characterized
by the following identities

$$ ((n+1)\odot x^n)^2 = 2 \odot x^{n+1}, \eqno(3.5)
$$
$$
(p\odot x^{p-1})^{n+1}= (n+1)\odot x^p,  \eqno(3.6)
$$
for every integer $p$,  $1<p< n$, such that $p$ is not a divisor of
$n.$

These identities were used to describe the following varieties. Let $\mathcal V_{P_n}$ and $\mathcal V^S_{P_n}$ be the varieties of PMV-algebras and symmetric PMV-algebras, respectively, satisfying the identities (3.5)--(3.6). Then the following result was established in \cite[Thm 5.1]{Dv08}.

\begin{theorem}\label{th:5.1}
We have
${\mathcal T}({\mathcal V}_{P_n}) = \mathcal{
BP}_n,$ and $\mathcal {BP}_n$ is a variety such that ${\mathcal T}(\mathcal{
BP}_n) = \mathcal {BP}_n = {\mathcal T}({\mathcal V}(\Gamma(\mathbb Z,n)))
={\mathcal T}({\mathcal V}({\mathcal M}_n(\mathbb Z))).$
\end{theorem}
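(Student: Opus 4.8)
The plan is to reduce everything to an analysis of the top components $\Gamma(G/K(V),u/K(V))$ appearing in the defining formula (3.3) for the top-variety operator ${\mathcal T}$. By \cite[Cor. 4.3]{DvHo}, ${\mathcal T}({\mathcal W})$ is automatically a variety for every variety ${\mathcal W}$, so once the first equality ${\mathcal T}({\mathcal V}_{P_n})=\mathcal{BP}_n$ is established, the assertion that $\mathcal{BP}_n$ is a variety comes for free. The whole statement is thus a chain of equalities between top varieties, and the key observation is that ${\mathcal T}({\mathcal W})$ depends only on which \emph{top components} (primitive transitive $\ell$-permutation groups, equivalently the algebras $\Gamma(G/K(V),u/K(V))$) belong to ${\mathcal W}$. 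So first I would record the characterization: $M=\Gamma(G,u)\in{\mathcal T}({\mathcal W})$ iff for every value $V$ of $u$ the top component $\Gamma(G/K(V),u/K(V))$ lies in ${\mathcal W}$; and, via Theorem \ref{th:2.1}, that $V$ is normal exactly when $K(V)=V$, in which case the top component is the simple MV-chain $M/\psi^{-1}(V)$.

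The heart of the argument is the classification of top components satisfying (3.5)--(3.6). I claim such a top component is necessarily a simple MV-algebra isomorphic to $\Gamma(\mathbb Z,k)$ for some divisor $k$ of $n$. The ``if'' direction is immediate: each $\Gamma(\mathbb Z,k)$ with $k\mid n$ is a subalgebra of $\Gamma(\tfrac1n\mathbb Z,1)=\Gamma(\mathbb Z,n)$, hence a member of ${\mathcal V}({\mathcal M}_n(\mathbb Z))$, and by \cite[Cor. 11]{DiLe2} it satisfies (3.5)--(3.6). For the ``only if'' direction I would use that a top component is an $o$-primitive $\ell$-permutation group; the finitary identities (3.5)--(3.6) bound its Archimedean structure and, via McCleary's classification of $o$-primitive components, force it to be the regular (hence \emph{commutative} and Archimedean) one, i.e. a simple MV-chain embeddable in $[0,1]$. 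The MV-identity analysis of \cite[Cor. 11]{DiLe2} then pins this chain down to $\Gamma(\mathbb Z,k)$ with $k\mid n$. Consequently ${\mathcal T}({\mathcal V}_{P_n})$ consists precisely of those $M$ all of whose top components are such chains; by (3.4) this is equivalent to every maximal ideal $I$ of $M$ being normal with $M/I\cong\Gamma(\mathbb Z,k)$, $k\mid n$ --- that is, to $M\in\mathcal{BP}_n$.

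With ${\mathcal T}({\mathcal V}_{P_n})=\mathcal{BP}_n$ in hand, the remaining equalities follow by comparing top components. Idempotence of ${\mathcal T}$ (the top components of a top component are again top components, so $\mathcal T^2=\mathcal T$, as is already visible in (3.4)) gives ${\mathcal T}(\mathcal{BP}_n)=\mathcal{BP}_n$. For the last two equalities I would note that the simple members of ${\mathcal V}(\Gamma(\mathbb Z,n))$ are, by Komori's description of MV-subvarieties, exactly the chains $\Gamma(\mathbb Z,k)$ with $k\mid n$; since ${\mathcal M}_n(\mathbb Z)=\Gamma(\tfrac1n\mathbb Z\lex\mathbb Z,(1,0))$ is $\tfrac1n\mathbb Z$-perfect, its quotient by the radical is $\Gamma(\tfrac1n\mathbb Z,1)\cong\Gamma(\mathbb Z,n)$, whence ${\mathcal V}(\Gamma(\mathbb Z,n))\subseteq{\mathcal V}({\mathcal M}_n(\mathbb Z))$ and both generating classes share the same simple (hence the same top) components $\{\Gamma(\mathbb Z,k):k\mid n\}$. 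As ${\mathcal T}$ is insensitive to anything but the top components, this yields ${\mathcal T}({\mathcal V}(\Gamma(\mathbb Z,n)))={\mathcal T}({\mathcal V}({\mathcal M}_n(\mathbb Z)))=\mathcal{BP}_n$.

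The step I expect to be the genuine obstacle is the ``only if'' half of the classification in the second paragraph: showing that an $o$-primitive top component satisfying the \emph{a priori} noncommutative identities (3.5)--(3.6) is forced to be commutative and Archimedean, so that it collapses to a simple MV-chain. This is exactly where the structure theory of $o$-primitive $\ell$-permutation groups must be married to the numerical content of the identities; everything else is bookkeeping with the operator ${\mathcal T}$ and the already-cited characterizations (3.4) and \cite[Cor. 11]{DiLe2}.
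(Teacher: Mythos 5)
Your overall architecture (membership in ${\mathcal T}(\cdot)$ is decided by the top components; the admissible components are exactly the chains $\Gamma(\mathbb Z,k)$, $k\mid n$; the rest is bookkeeping) is the right one, and it matches the strategy behind this result --- note, though, that the paper itself gives no proof of Theorem \ref{th:5.1}: it is quoted from \cite[Thm 5.1]{Dv08}, and the only in-paper model of the technique is the proof of the non-cyclic analogue, Theorem \ref{th:5.2}. The genuine gap in your proposal is the one you flag yourself: the ``only if'' half of your classification, i.e.\ the claim that a primitive transitive $\ell$-permutation top component whose unit interval satisfies (3.5)--(3.6) must be regular (hence abelian and Archimedean), is asserted but never proved. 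That claim is not a peripheral detail; it is the entire content of the inclusion ${\mathcal T}({\mathcal V}_{P_n})\subseteq \mathcal{BP}_n$, since everything downstream of it (identifying the chain via \cite[Cor 11]{DiLe2}, comparing the simple members of ${\mathcal V}(\Gamma(\mathbb Z,n))$ and ${\mathcal V}({\mathcal M}_n(\mathbb Z))$) is indeed routine. A proof that defers exactly this point establishes only the easy inclusion $\mathcal{BP}_n\subseteq{\mathcal T}({\mathcal V}_{P_n})$.

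Moreover, the way you propose to close the gap --- ``marrying'' the numerical content of (3.5)--(3.6) to McCleary's classification --- is misdirected: the identities are not needed numerically at that stage, only their nontriviality is. Since (3.5) fails in $\Gamma(\mathbb R,1)$ (take $x=\frac{n}{n+1}$: the left side is $1$, the right side is $0$), ${\mathcal V}_{P_n}$ is a proper variety, hence contained in the variety ${\mathcal N}$ of normal-valued PMV-algebras, which is the largest proper variety (see \cite{156, DvHo}); and a normal-valued o-primitive transitive $\ell$-permutation group is regular \cite{Gla}, i.e.\ an Archimedean o-group acting on itself, forcing $K(V)=V$ and an abelian simple quotient. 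Only after this structural collapse do the numerics of (3.5)--(3.6) act, via \cite[Cor 11]{DiLe2}, to pin the resulting simple MV-chain down to $\Gamma(\mathbb Z,k)$, $k\mid n$. Separating the two ingredients in this way dissolves your ``genuine obstacle.'' Finally, your appeal to idempotence of ${\mathcal T}$ is also unsubstantiated as stated (it is not clear in general that top components of top components are again top components); the intended substitute is visible in the paper's proof of Theorem \ref{th:5.2}: $\mathcal{BP}_n\subseteq{\mathcal T}(\mathcal{BP}_n)\subseteq{\mathcal T}({\mathcal M})={\mathcal M}$ by (3.4) and monotonicity of ${\mathcal T}$, and then for $M\in{\mathcal T}(\mathcal{BP}_n)$ and $I$ maximal, $I$ is normal and $M/I$ is simple and lies in $\mathcal{BP}_n$, hence $M/I\cong (M/I)/\{0\}\cong\Gamma(\mathbb Z,k)$ with $k\mid n$, giving $M\in\mathcal{BP}_n$.
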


For the case that $\mathbb H$ is not cyclic, we extend Theorem \ref{th:5.1} as follows. We note that by (3.3) we can define $\mathcal{T(V)}$ for any family $\mathcal V$ of PMV-algebras (not only for varieties).

\begin{theorem}\label{th:5.2}
Let $\mathbb H$ be not a cyclic subgroup of $\mathbb R.$ Then $\mathcal{T}(\mathcal{BP}_\mathbb H) = \mathcal{BP}_\mathbb H$ and $\mathcal{BP}_\mathbb H.$
In addition, $\mathcal{T(V(BP}_\mathbb H))=\mathcal M= \mathcal {T(V(PPMV}_\mathbb H)).$
\end{theorem}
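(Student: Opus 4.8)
The plan is to read the statement as three assertions about the top-variety operator $\mathcal{T}$ — the fixed-point identity $\mathcal{T}(\mathcal{BP}_\mathbb H)=\mathcal{BP}_\mathbb H$, the failure of $\mathcal{BP}_\mathbb H$ to be a variety, and the two equalities $\mathcal{T}(\mathcal{V}(\mathcal{BP}_\mathbb H))=\mathcal M=\mathcal{T}(\mathcal{V}(\mathcal{PPMV}_\mathbb H))$ — and to reduce all three to three structural facts. The facts I would isolate first are: $(1)$ $\mathcal{T}$ is monotone, which is immediate from $(3.3)$; $(2)$ $\mathcal M=\mathcal{T}(\mathcal{MV})=\mathcal{T}(\mathcal M)$, which is $(3.4)$; and $(3)$ the decisive consequence of non-cyclicity, namely $\mathcal{V}(\Gamma(\mathbb H,1))=\mathcal{MV}$. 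For $(3)$ I would use that a non-cyclic $\mathbb H$ is dense, so $\Gamma(\mathbb H,1)$ is an infinite simple (Archimedean) MV-chain, and an infinite simple MV-algebra lies in no proper subvariety of $\mathcal{MV}$: by Komori's classification the Archimedean members of a proper subvariety are subalgebras of one fixed finite chain, hence finite. I would also record at the outset that $\Gamma(\mathbb H,1)\in\mathcal{BP}_\mathbb H\cap\mathcal{PPMV}_\mathbb H$ (with slices $M_t=\{t\}$ for the $\mathbb H$-perfect structure, and simplicity for $\mathcal{BP}_\mathbb H$), and that $\mathcal{BP}_\mathbb H,\mathcal{PPMV}_\mathbb H\subseteq\mathcal M$ by Theorem \ref{th:3.2}(v)--(vi) and the defining condition of $\mathcal{BP}_\mathbb H$.

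For $\mathcal{T}(\mathcal{BP}_\mathbb H)=\mathcal{BP}_\mathbb H$ I would analyze top components. If $M=\Gamma(G,u)\in\mathcal{BP}_\mathbb H$, then every maximal ideal $I=\psi^{-1}(V)$ is normal, so $V$ is a normal convex $\ell$-subgroup and $K(V)=V$; thus the top component at $V$ is $\Gamma(G/V,u/V)\cong M/I$, a subalgebra of $\Gamma(\mathbb H,1)$, hence itself a simple member of $\mathcal{BP}_\mathbb H$, so $M\in\mathcal{T}(\mathcal{BP}_\mathbb H)$. Conversely, monotonicity gives $\mathcal{T}(\mathcal{BP}_\mathbb H)\subseteq\mathcal{T}(\mathcal M)=\mathcal M$, so any $M\in\mathcal{T}(\mathcal{BP}_\mathbb H)$ has all maximal ideals normal; then again $K(V)=V$ and $M/I=\Gamma(G/K(V),u/K(V))\in\mathcal{BP}_\mathbb H$, and since $M/I$ is simple this forces $M/I\hookrightarrow\Gamma(\mathbb H,1)$ for every maximal $I$, i.e. $M\in\mathcal{BP}_\mathbb H$. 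The only supporting lemma is that a simple PMV-algebra belongs to $\mathcal{BP}_\mathbb H$ exactly when it embeds into $\Gamma(\mathbb H,1)$; this is the definition of $\mathcal{BP}_\mathbb H$ read off on simple algebras, together with the fact that every nontrivial subalgebra of $\Gamma(\mathbb R,1)$ is simple.

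The equalities $\mathcal{T}(\mathcal{V}(\mathcal{BP}_\mathbb H))=\mathcal M=\mathcal{T}(\mathcal{V}(\mathcal{PPMV}_\mathbb H))$ I would then obtain by the same sandwich, uniformly for $\mathcal K\in\{\mathcal{BP}_\mathbb H,\mathcal{PPMV}_\mathbb H\}$. Since $\mathcal K\subseteq\mathcal M$ and $\mathcal M$ is a variety, $\mathcal{V}(\mathcal K)\subseteq\mathcal M$, so monotonicity and $(3.4)$ give $\mathcal{T}(\mathcal{V}(\mathcal K))\subseteq\mathcal{T}(\mathcal M)=\mathcal M$. For the reverse inclusion I would invoke $\Gamma(\mathbb H,1)\in\mathcal K$ together with fact $(3)$: $\mathcal{MV}=\mathcal{V}(\Gamma(\mathbb H,1))\subseteq\mathcal{V}(\mathcal K)$, whence $\mathcal M=\mathcal{T}(\mathcal{MV})\subseteq\mathcal{T}(\mathcal{V}(\mathcal K))$. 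This is exactly where non-cyclicity enters, and it is what makes both top varieties collapse to $\mathcal M$, in contrast with the cyclic situation of Theorem \ref{th:5.1}.

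Finally, to see that $\mathcal{BP}_\mathbb H$ is not a variety I would show it is not closed under products. I would take the power $M=\Gamma(\mathbb H,1)^{\mathbb N}$, choose $h_n\in[0,1]_\mathbb H$ with $h_n\to\rho$ for some $\rho\in[0,1]\setminus\mathbb H$ (available since $\mathbb H$ is dense but proper), and a free ultrafilter $\mathcal U$ on $\mathbb N$; the functional $s(x)=\lim_{\mathcal U}x_n$ is a state on $M$ which is extremal because $s(a\wedge b)=\lim_{\mathcal U}\min\{a_n,b_n\}=\min\{s(a),s(b)\}$, yet $s((h_n)_n)=\rho\notin\mathbb H$, so $s$ is not an $\mathbb H$-state. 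Hence $M\notin\mathcal{BP}_\mathbb H$ although every factor is, and $\mathcal{BP}_\mathbb H$ fails to be a variety. I expect the two density-driven points to be the main obstacles: giving a clean proof of $\mathcal{V}(\Gamma(\mathbb H,1))=\mathcal{MV}$ (the Komori-classification step) and verifying rigorously that the ultrafilter-limit functional is an extremal state whose value escapes $\mathbb H$; both rest on $\mathbb H$ being dense and not closed in $\mathbb R$, which is precisely the feature distinguishing the present case from the cyclic one.
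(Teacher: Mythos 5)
For the two assertions the paper actually establishes in this theorem --- the fixed point $\mathcal T(\mathcal{BP}_\mathbb H)=\mathcal{BP}_\mathbb H$ and the equalities $\mathcal T(\mathcal V(\mathcal{BP}_\mathbb H))=\mathcal M=\mathcal T(\mathcal V(\mathcal{PPMV}_\mathbb H))$ --- your argument is essentially the paper's own: the inclusions $\mathcal{BP}_\mathbb H\subseteq\mathcal T(\mathcal{BP}_\mathbb H)\subseteq\mathcal M$ via (3.4) and monotonicity of $\mathcal T$; for the converse, normality of each maximal ideal $I$ gives $K(V)=V$, so the top component is $M/I\in\mathcal{BP}_\mathbb H$, and simplicity of $M/I$ forces $M/I$ to embed into $\Gamma(\mathbb H,1)$; and then the sandwich $\mathcal M=\mathcal T(\mathcal{MV})\subseteq\mathcal T(\mathcal V(\mathcal{PPMV}_\mathbb H))\subseteq\mathcal T(\mathcal V(\mathcal{BP}_\mathbb H))\subseteq\mathcal T(\mathcal M)=\mathcal M$, driven by $\Gamma(\mathbb H,1)\in\mathcal{PPMV}_\mathbb H$ and $\mathcal V(\Gamma(\mathbb H,1))=\mathcal{MV}$ (you sketch the latter via Komori's classification, the paper simply cites \cite[Prop 8.1.1]{CDM}; same fact either way). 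Where you diverge is in reading the garbled clause ``and $\mathcal{BP}_\mathbb H$'' as the claim that $\mathcal{BP}_\mathbb H$ is not a variety. In the paper that claim is not part of this theorem at all: it is Theorem \ref{th:5.3}, and it carries the additional hypothesis $\mathbb H\ne\mathbb R$, which is essential, since $\mathbb H=\mathbb R$ is non-cyclic yet $\mathcal{BP}_\mathbb R=\mathcal M$ is a variety. Under the hypotheses of the present statement alone your third assertion is therefore false; you do quietly invoke properness (``dense but proper''), so that hypothesis must be stated explicitly. Granting it, your ultrafilter construction is correct and is a genuinely different route from the paper's: the $\mathcal U$-limit functional on $\Gamma(\mathbb H,1)^{\mathbb N}$ is a state, it is extremal by the characterization $s(a\wedge b)=\min\{s(a),s(b)\}$ (because $\{n:a_n\le b_n\}$ or its complement lies in $\mathcal U$), and it takes the value $\rho\notin\mathbb H$, so $\mathcal{BP}_\mathbb H$ is not closed under powers. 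The paper's Theorem \ref{th:5.3} instead recycles the variety-generation fact: if $\mathcal{BP}_\mathbb H$ were a variety it would contain $\mathcal V(\Gamma(\mathbb H,1))=\mathcal{MV}$, hence $\Gamma(\mathbb R,1)$, whose unique extremal state is not an $\mathbb H$-state. Your version is more self-contained and exhibits the failure of closure under products concretely, at the cost of a free ultrafilter; the paper's is shorter because it reuses what was already proved.
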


\begin{proof}
By the definition of $\mathcal{BP}_\mathbb H,$ we have $\mathcal { BP}_\mathbb H \subset \mathcal {
M}.$  Due to (3.4), $\mathcal { BP}_\mathbb H \subseteq \mathcal { T}(\mathcal { BP}_\mathbb H)
\subseteq \mathcal { M}.$ Let $M\in \mathcal { T}(\mathcal { BP}_\mathbb H)$ and let $I$ be
a maximal ideal of $M$. Then $I$ is normal and $M/I \in \mathcal {
BP}_\mathbb H$. Since $I$ is maximal, $M/I$ is an MV-subalgebra of
$\Gamma(\mathbb H,1)\subseteq \Gamma(\mathbb R,1)$ and $M/I$ has a unique maximal ideal, $J$,
which is the zero one. Therefore,  $M/I \cong  (M/I)/J \in \mathcal {
BP}_\mathbb H.$  This proves that
$\mathcal {BP}_\mathbb H= \mathcal { T}(\mathcal { BP}_\mathbb H).$

By (iv) of Theorem \ref{th:3.2}, we have $\mathcal{PPMV}_\mathbb H \subseteq \mathcal{BP}_\mathbb H \subseteq \mathcal M.$ Then $\mathcal{V(PPMV}_\mathbb H) \subseteq \mathcal{V(BP}_\mathbb H) \subseteq \mathcal M.$ It is clear that $\Gamma(\mathbb H,1) \in \mathcal{V(PPMV}_\mathbb H).$ Since $H$ is dense in $\mathbb R,$ by \cite[Prop 8.1.1]{CDM}, $\mathcal{MV} =\mathcal V(\Gamma(\mathbb H,1))$ and, therefore by (3.4), $\mathcal {M} = \mathcal{T(MV)} \subseteq
\mathcal{T(V(PPMV}_\mathbb H)) \subseteq \mathcal{T(V(BP}_\mathbb H)) \subseteq \mathcal {T(M)} =\mathcal M.$
\end{proof}

We note that according to Theorem \ref{th:5.1}, if $\mathbb H$ is cyclic, then $\mathcal{BP}_\mathbb H$ is a variety. 
In the next theorem, we show that if $\mathbb H\ne \mathbb R$ is not cyclic, then $\mathcal{BP}_\mathbb H$ is not a variety.

Now we show when $\mathcal{BP}_\mathbb H$ is a variety.

\begin{theorem}\label{th:5.3}
The systems $\mathcal{BP}_\mathbb H$ and $\mathcal{SBP}_\mathbb H$ are varieties if and only if either $\mathbb H$ is cyclic or $\mathbb H =\mathbb R.$ In such a case,  $\mathcal{BP}_\mathbb R=\mathcal M,$  $\mathcal{SBP}_\mathbb R=\mathcal{SYM}\cap \mathcal M,$ and all $\mathcal{BP}_n \ne \mathcal{M}$ are mutually different.
\end{theorem}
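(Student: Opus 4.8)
The plan is to prove the biconditional by handling the two sufficient cases (cyclic $\mathbb H$ and $\mathbb H=\mathbb R$) directly, and to obtain necessity by exhibiting a single separating algebra. For cyclic $\mathbb H=\frac{1}{n}\mathbb Z$, Theorem \ref{th:5.1} already yields that $\mathcal{BP}_n$ is a variety; since by definition $\mathcal{SBP}_n=\mathcal{SYM}\cap\mathcal{BP}_n$ and $\mathcal{SYM}$ is a variety, $\mathcal{SBP}_n$ is a variety as an intersection of two varieties. For $\mathbb H=\mathbb R$ I would show that the defining membership condition becomes vacuous: if $M\in\mathcal M$ and $I$ is a maximal ideal of $M$, then $I$ is normal, so by the extremal-state/maximal-normal-ideal correspondence recalled in Section 2 we have $I=\Ker(s)$ for an extremal state $s\colon M\to[0,1]$, and $M/I$ is isomorphic to the subalgebra $s(M)$ of $\Gamma(\mathbb R,1)=[0,1]$. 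Hence every $M\in\mathcal M$ already satisfies the condition defining $\mathcal{BP}_\mathbb R$, giving $\mathcal{BP}_\mathbb R=\mathcal M$ and then $\mathcal{SBP}_\mathbb R=\mathcal{SYM}\cap\mathcal{BP}_\mathbb R=\mathcal{SYM}\cap\mathcal M$; both are varieties.

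For necessity I would argue contrapositively: assume $\mathbb H$ is neither cyclic nor $\mathbb R$, so that (by the dichotomy of \cite[Lem 4.21]{Goo}) $\mathbb H$ is dense and $\mathbb H\subsetneq\mathbb R$. The algebra $\Gamma(\mathbb H,1)$ is simple, hence belongs to $\mathcal{BP}_\mathbb H$ (its only maximal ideal is $\{0\}$ and the quotient is $\Gamma(\mathbb H,1)$ itself). Density gives $\mathcal V(\Gamma(\mathbb H,1))=\mathcal{MV}$ by \cite[Prop 8.1.1]{CDM}, so $\Gamma(\mathbb R,1)=[0,1]\in\mathcal V(\mathcal{BP}_\mathbb H)$. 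Yet $[0,1]\notin\mathcal{BP}_\mathbb H$, because its unique maximal ideal is $\{0\}$ and the quotient $[0,1]$ is not a subalgebra of $\Gamma(\mathbb H,1)\subsetneq[0,1]$. Thus $\mathcal{BP}_\mathbb H\subsetneq\mathcal V(\mathcal{BP}_\mathbb H)$ and $\mathcal{BP}_\mathbb H$ is not a variety; as $[0,1]$ is an MV-algebra and hence symmetric, the same witness shows $\mathcal{SBP}_\mathbb H$ is not a variety.

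The separation assertion I would also settle with $[0,1]$. For each finite $n$ the quotient of $[0,1]$ by its maximal ideal $\{0\}$ is the infinite algebra $[0,1]$, which cannot embed into the finite algebra $\Gamma(\frac{1}{n}\mathbb Z,1)$; hence $[0,1]\in\mathcal M\setminus\mathcal{BP}_n$ and $\mathcal{BP}_n\ne\mathcal M$ for every $n$. Mutual distinctness then follows from the inclusion criterion $\mathcal{BP}_m\subseteq\mathcal{BP}_n\iff m\mid n$ recalled above: $\mathcal{BP}_m=\mathcal{BP}_n$ forces $m\mid n$ and $n\mid m$, whence $m=n$.

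I expect the only genuinely delicate point to be the identity $\mathcal{BP}_\mathbb R=\mathcal M$, where one must verify that the abstract subalgebra condition in the definition of $\mathcal{BP}_\mathbb H$ is automatically met by an arbitrary $M\in\mathcal M$; the cleanest route is the state correspondence of Section 2 together with H\"older's theorem, identifying each simple quotient with a subalgebra of $[0,1]$. Everything else reduces to locating the two distinguished algebras $\Gamma(\mathbb H,1)$ and $\Gamma(\mathbb R,1)$ and applying the already established facts.
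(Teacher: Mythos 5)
Your proposal is correct and follows essentially the same route as the paper: the cyclic case is delegated to Theorem \ref{th:5.1}, the equality $\mathcal{BP}_{\mathbb R}=\mathcal M$ is verified directly from the definition via the normal-maximal-ideal/extremal-state correspondence, and necessity uses the same witness $\Gamma(\mathbb R,1)$, which lies in $\mathcal V(\Gamma(\mathbb H,1))=\mathcal{MV}$ by density and \cite[Prop 8.1.1]{CDM} but fails the defining condition of $\mathcal{BP}_{\mathbb H}$ when $\mathbb H\ne\mathbb R$. The only differences are cosmetic and in your favor: you bypass the detour through Theorem \ref{th:5.2} by working with $\mathcal V(\mathcal{BP}_{\mathbb H})$ directly, you handle $\mathcal{SBP}_{\mathbb H}$ cleanly as $\mathcal{SYM}\cap\mathcal{BP}_{\mathbb H}$, and you make explicit the mutual-distinctness claim via $\mathcal{BP}_m\subseteq\mathcal{BP}_n$ iff $m\mid n$, points the paper leaves to ``in a similar way'' and to remarks preceding the theorem.
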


\begin{proof}
The case when $\mathcal {BP}_\mathbb H$ is a variety for $\mathbb H=\frac{1}{n}\mathbb Z$  was shown in Theorem \ref{th:5.1}. If $\mathbb H=\mathbb R,$ then evidently $\mathcal{BP}_\mathbb R \subseteq \mathcal M$ and if $M \in \mathcal M$, then every its maximal ideal $I$ is normal, and $M/I$ is a subalgebra of $\Gamma(\mathbb R,1),$ so that $M \in \mathcal {BP}_\mathbb R.$

Now assume that $\mathbb H$ is not a cyclic subgroup of $\mathbb R$ and let $\mathbb H \ne \mathbb R.$ If $\mathcal{BP}_\mathbb H$ is a variety, by Theorem \ref{th:5.2}, $\mathcal{BP}_\mathbb H = \mathcal{T(BP}_\mathbb H)$ and $\mathcal{MV} \subseteq \mathcal{BP}_\mathbb H$ so that $M=\Gamma(\mathbb R,1) \in \mathcal{MV}\subseteq \mathcal{BP}_\mathbb H,$ but on the other hand, $M$ does not belong to $\mathcal{BP}_\mathbb H$ by definition of $\mathcal{BP}_\mathbb H$ because $\mathbb R$ is not a subgroup of $\mathbb H.$

In a similar way we deal with $\mathcal{SBP}_\mathbb H.$
\end{proof}

In what follows, we describe subdirectly irreducible elements in $\mathcal{BP}_\mathbb H,$ Theorem \ref{th:5.5}. It will be shown that they are only $\mathbb K$-perfect PMV-algebras, where $\mathbb K$ is a subgroup of $\mathbb H$ such that $1 \in \mathbb K.$

If $A$ is a subset of a PMV-algebra $M,$ we denote by $\langle A\rangle$ the subalgebra of $M$ generated by $A.$

\begin{proposition}\label{pr:5.4}
{\rm (1)} Let $M$  be a
PMV-algebra such that $\mathcal S(M)\ne \emptyset,$ and let us define
$$
M_t'=\bigcap \{s^{-1}(\{t\}): \ s \in
\partial_e{\mathcal S}(M)\},\quad t \in [0,1]_\mathbb H.
$$
Then $$ \langle \bigcup_{t \in [0,1]_\mathbb H} M_t' \rangle = \bigcup_{t \in [0,1]_\mathbb H}M_t'.
$$

{\rm (2)} If $M \in {\mathcal M}$, then $\bigcup_{t \in [0,1]_\mathbb H}M_t'$ is the
biggest subalgebra of $M$ having a unique extremal state, and this
state is an $\mathbb H$-state.


\end{proposition}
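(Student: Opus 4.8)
The plan is to prove part (1) by showing that the set $N:=\bigcup_{t\in[0,1]_{\mathbb H}}M_t'$ is closed under all the PMV-algebra operations, so that $\langle N\rangle = N$. For part (2), I would identify $N$ as a subalgebra carrying a unique extremal state and then argue its maximality among such subalgebras. The central technical fact I would exploit is the characterization of extremal states recorded in the excerpt: a state $s$ is extremal iff $s(a\wedge b)=\min\{s(a),s(b)\}$ (the analogue for $\vee$ via de Morgan giving $s(a\vee b)=\max\{s(a),s(b)\}$), and $s(a^-)=s(a^\sim)=1-s(a)$, $s(a\oplus b)=\min\{s(a)+s(b),1\}$.

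First, for part (1), I would observe that $0\in M_0'$ and $1\in M_1'$, so $N$ is nonempty and contains the constants. For negations: if $x\in M_t'$, then for every $s\in\partial_e\mathcal S(M)$ we have $s(x^-)=1-s(x)=1-t$, so $x^-\in M_{1-t}'\subseteq N$, and likewise $x^\sim\in N$. The crucial step is closure under $\oplus$. Take $x\in M_v'$ and $y\in M_t'$. For each extremal state $s$, the value $s(x\oplus y)$ is determined \emph{solely} by $s(x)=v$ and $s(y)=t$ via $s(x\oplus y)=\min\{v+t,1\}$; since this value does not depend on which extremal state $s$ we chose, $x\oplus y$ lies in $M_{\min\{v+t,1\}}'\subseteq N$. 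This is the heart of the argument and the step I expect to carry the most weight: it is precisely the state-independence of the computed value (forced by $s$ being a homomorphism-like map into $[0,1]$) that makes the slices closed under the operations. The same reasoning handles $\odot$. Hence $N$ is a subalgebra, giving $\langle N\rangle=N$.

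For part (2), I would first note that, since $M\in\mathcal M$, every maximal ideal is normal, so $M$ admits extremal states and the definition of $M_t'$ is nonvacuous. The restriction of any $s\in\partial_e\mathcal S(M)$ to $N$ is a state on $N$; by construction all extremal states agree on $N$ (they all send $M_t'$ to $t$), so $N$ has a unique extremal state, and its values lie in $[0,1]_{\mathbb H}$, i.e. it is an $\mathbb H$-state. To see $N$ is the \emph{biggest} such subalgebra, suppose $B\supseteq N$ is a subalgebra of $M$ with a unique extremal state $s_0$. Every extremal state of $M$ restricts to an extremal state of $B$, hence equals $s_0$ on $B$; but then every $b\in B$ satisfies $s(b)=s_0(b)$ for all $s\in\partial_e\mathcal S(M)$, so $b\in M_{s_0(b)}'\subseteq N$, forcing $B\subseteq N$ and thus $B=N$.

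The main obstacle I anticipate is justifying that the restriction of an extremal state of $M$ to a subalgebra is again \emph{extremal} (needed in part (2)); I would handle this through the characterization $s(a\wedge b)=\min\{s(a),s(b)\}$, which is manifestly inherited by restriction to any subalgebra closed under $\wedge$. A secondary point requiring care is that $\partial_e\mathcal S(M)\ne\emptyset$ and that the $M_t'$ are genuinely nonempty for the relevant $t$; the hypothesis $M\in\mathcal M$ in part (2) secures normality of maximal ideals and hence, via the correspondence between maximal normal ideals and extremal states cited earlier, the existence of enough extremal states.
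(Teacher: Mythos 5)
Your part (1) follows the paper's own route: show that $N=\bigcup_{t}M_t'$ contains $0,1$, is closed under $^-,{}^\sim$, and is closed under $\oplus$, the point being that an extremal state assigns to $x\oplus y$ a value determined by $s(x)$ and $s(y)$ alone. One caveat: the identity $s(x\oplus y)=\min\{s(x)+s(y),1\}$ is \emph{not} among the facts recorded in the paper --- it only records additivity on the partial $+$ and the characterization $s(a\wedge b)=\min\{s(a),s(b)\}$ of extremal states (the paper itself disposes of (1) by citing the states paper). You should derive it: in any PMV-algebra $x\oplus y = x + (x^\sim\wedge y)$, so for an extremal state $s(x\oplus y)=s(x)+\min\{1-s(x),s(y)\}=\min\{s(x)+s(y),1\}$. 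With that inserted, part (1) is correct. Your maximality argument in part (2) also coincides with the paper's: restriction of an extremal state of $M$ to a subalgebra is again extremal by the $\wedge$-characterization, hence equals the competing subalgebra's unique extremal state $s_0$; note that you must also use that $s_0$ is an $\mathbb H$-state (otherwise $M'_{s_0(b)}$ is not even defined), and that your hypothesis $B\supseteq N$ is never used and is not what ``biggest'' requires.

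The genuine gap is in your claim that $N$ has a \emph{unique} extremal state. You argue that all extremal states of $M$ restrict to the same state on $N$, ``so $N$ has a unique extremal state.'' That is a non sequitur: it shows the restrictions yield one extremal state of $N$, but it does not exclude extremal states of $N$ that are not restrictions of extremal states of $M$. This is exactly where the paper invokes an extension property: if $s$ is an extremal state on $M'$, there is an extremal state $\hat s$ on $M$ with $\Ker(s)=\Ker(\hat s)\cap M'$, whence $s=\hat s|_{M'}$, and only then does agreement of all restrictions give uniqueness. This extension step is where the hypothesis $M\in\mathcal M$ really enters: the ideal of $M$ generated by $\Ker(s)$ is proper (by monotonicity and subadditivity of $s$), so it extends to a maximal ideal of $M$, which is normal because $M\in\mathcal M$, hence is the kernel of an extremal state $\hat s$; intersecting with $M'$ and using the bijection between maximal normal ideals and extremal states identifies $s$ with $\hat s|_{M'}$. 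For pseudo MV-algebras, states do not in general extend from subalgebras (PMV-algebras can even be stateless), so this step cannot be waved away. Your proposal never mentions it, and instead flags the restriction step --- which is the easy direction --- as ``the main obstacle.''
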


\begin{proof} (1)  It is clear that $M':=
\bigcup_{t \in [0,1]_\mathbb H} M_t'$ contains $0,1$, and if $x\in M_t'$, then
$x^-,x^\sim \in M_{1-t}'$,  \cite[Prop. 4.1]{156}. If $x\in M_v'$
and $y \in M_t'$, then $x\oplus y \in M_{v\oplus t}'.$

(2) If $s_1$ and $s_2$ are  extremal states on $M$, then their
restrictions to $M'$ are extremal states on $M'$ which are $\mathbb H$-states, and $s_1(a) = s_2(a)$ for any $a \in M'.$
Conversely, if $s$ is an extremal state on $M'$, then there is an
extremal state $\hat s$ on $M$ such that $\mbox{Ker}(s) =
\mbox{Ker}(\hat s)\cap M'$. Then $\mbox{Ker}(s) =
\mbox{Ker}(s_{|M'})$ which yields $s = \hat s_{|M'}$. Therefore, $s
= {s_1}_{|M'}$ for any extremal state $s_1$ on $M$.  Let $s'$ be the
unique extremal state on $M'$, then $M_t' = s'^{-1}(\{t\})$
whenever $M_t'\ne \emptyset$ for any $t \in [0,1]_\mathbb H.$

Let now $M''$ be an arbitrary subalgebra of $M$ having a unique
extremal state $s''$, and let this state be an $\mathbb H$-state.
Since every restriction of an extremal state of $M$ to $M''$ is an
extremal state on $M''$, and any extremal state on $M''$ can be
extended to an extremal state on $M$, we see that $s''^{-1}(\{t\})
\subseteq M_t'$ for any $t \in [0,1]_\mathbb H,$ hence, $M'' \subseteq M'.$
\end{proof}

The following characterization of subdirectly irreducible elements was originally proved in \cite[Lem 5.3]{Dv08} for the case $\mathbb H=\frac{1}{n}\mathbb Z.$ In the following lemma we extend it for a general case of $\mathbb H.$ Nevertheless the proof for our case follows the same ideas as that in \cite{Dv08}, to be self-contained, we present the proof if full completeness together with necessary changes.

\begin{theorem}\label{th:5.5}
If $M \in \mathcal{BP}_\mathbb H$  ($M \in \mathcal{ SBP}_\mathbb H$)
is subdirectly irreducible, then either $M$ is trivial or $M =
\bigcup_{t \in [0,1]_\mathbb H} M_t,$ where  $M_t =\bigcap \{s^{-1}(\{t\}): \ s
\in \partial_e\mathcal{S}(M)\}$ for each $t \in [0,1]_\mathbb H,$
$|\partial_e\mathcal{ S}(M)|= 1,$ and $M$ is a $\mathbb K$-perfect PMV-algebra (symmetric and
$\mathbb K$-perfect PMV-algebra),  where $\mathbb K$ is a subgroup of $\mathbb H$ such that $1 \in \mathbb K.$
\end{theorem}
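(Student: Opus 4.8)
The plan is to reduce everything to a single structural fact, namely that a subdirectly irreducible member of $\mathcal{BP}_\mathbb H$ cannot contain two nonzero normal ideals with trivial intersection, and then to read off both the uniqueness of the extremal state and the comparability of the slices from it. First I would dispose of the trivial algebra and assume $M$ is nontrivial. Since $M \in \mathcal{BP}_\mathbb H \subseteq \mathcal M$, every maximal ideal is normal; as $M \ne \{0\}$ there is at least one maximal ideal, so by the correspondence between extremal states and maximal normal ideals ($s \leftrightarrow \Ker(s)$, \cite{156}) we get $\partial_e\mathcal S(M) \ne \emptyset$. Passing to the $\ell$-group picture $M = \Gamma(G,u)$ of Theorem \ref{th:2.1} and using (3.2), congruences of $M$ correspond to normal ideals, i.e. to $\ell$-ideals of $G$, and maximal (automatically normal) ideals correspond to the values of $u$, whose quotients are subalgebras of $\Gamma(\mathbb H,1)$, hence simple chains. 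Being subdirectly irreducible, $M$ has a monolith: a least nonzero normal ideal $N$ contained in every nonzero normal ideal. I record the consequence used repeatedly: if $J_1,J_2$ are nonzero normal ideals with $J_1 \cap J_2 = \{0\}$, then $N \subseteq J_1 \cap J_2 = \{0\}$, a contradiction; so no two nonzero normal ideals of $M$ are disjoint.

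If $M$ is simple then its only maximal ideal is $\{0\}$, giving a unique extremal state $s$ with $\Ker(s) = \{0\}$, so $M \cong M/\Ker(s)$ is a subalgebra $\Gamma(\mathbb K,1)$ of $\Gamma(\mathbb H,1)$ for the subgroup $\mathbb K \le \mathbb H$ generated by $s(M)$ (with $1 \in \mathbb K$), which is trivially $\mathbb K$-perfect. Assume now $M$ is not simple, so every $\Ker(s)$ is nonzero. For uniqueness, suppose $s_1 \ne s_2$ are extremal with distinct maximal normal ideals $P_i = \Ker(s_i)$; then $P_1 \vee P_2 = M$, and since PMV-algebras are congruence permutable the quotient $M/(P_1\cap P_2) \cong M/P_1 \times M/P_2$ is a product of two nontrivial chains, hence contains two disjoint nonzero positive elements. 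Lifting them to $G$ and subtracting their meet yields disjoint $e_1,e_2 \in G^+ \setminus\{0\}$; the normal ideals $J_i$ they generate are then two nonzero normal ideals with $J_1 \cap J_2 = \{0\}$, contradicting the previous paragraph. Hence $|\partial_e\mathcal S(M)| = 1$; writing $s$ for the unique (necessarily $\mathbb H$-valued) extremal state, $M_t = \bigcap\{s'^{-1}(\{t\}) : s' \in \partial_e\mathcal S(M)\} = s^{-1}(\{t\})$ and $M = \bigcup_t M_t$.

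It remains to make the slices comparable, so that $(M_t)$ is an $\mathbb H$-decomposition and $M$ is $\mathbb K$-perfect for $\mathbb K$ the subgroup of $\mathbb H$ generated by $s(M)$. Conditions (b) and (c) of Definition \ref{de:3.1} hold by the computation in the proof of Proposition \ref{pr:5.4}(1), so only comparability (a) is at stake: I must show $s(x) < s(y)$ forces $x \le y$, equivalently in $G$ that $s(g) > 0$ forces $g \ge 0$ (the lexicographic law). If this failed for some $g$, then its parts $g^+ = g \vee 0$ and $g^- = (-g)\vee 0$ would be disjoint and both nonzero (since $s(g^+) = s(g) > 0$ while $s(g^-) = 0$ yet $g^- \ne 0$), and would again generate two disjoint nonzero normal ideals, contradicting the monolith. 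With comparability established, $(M_t : t \in [0,1]_\mathbb K)$ is a $\mathbb K$-decomposition, $M$ is $\mathbb K$-perfect, and in the symmetric case $M \in \mathcal{SBP}_\mathbb H$ the construction manifestly preserves $a^- = a^\sim$, yielding a symmetric $\mathbb K$-perfect algebra.

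I expect the main obstacle to be precisely the step that turns ``two disjoint positive elements'' into ``two disjoint nonzero normal ideals'' in the possibly non-commutative $G$: conjugates of disjoint elements need not remain disjoint, so passing to normal closures can a priori destroy disjointness. Controlling this — presumably by exploiting that the offending elements lie inside $\Rad(M)$ and that $M$ satisfies $\mathrm{RDP}_2$, i.e. is a lattice, or by working with the generated normal ideals directly and showing their meet still collapses under $s$ — is the technical heart, and is exactly where the argument must differ from the commutative ($\frac1n\mathbb Z$) case of \cite{Dv08}.
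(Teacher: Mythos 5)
Your route (monolith among normal ideals $\Rightarrow$ no two nonzero normal ideals are disjoint $\Rightarrow$ uniqueness of the extremal state and the lexicographic law) is genuinely different from the paper's, but it is not yet a proof, and the hole is exactly the step you flag in your last paragraph: the passage from ``two disjoint nonzero positive elements'' to ``two disjoint nonzero normal ideals.'' This is not a technicality to be controlled later; as an unrestricted statement about $\ell$-groups it is \emph{false}. Take $G=\Aut(\mathbb R)$, the $\ell$-group of order-automorphisms of the real line. Every nonzero $\ell$-ideal of $G$ contains the $\ell$-ideal $B$ of automorphisms with bounded support, so $G$ is subdirectly irreducible with monolith $B$; yet $G$ contains many pairs of disjoint strictly positive elements (automorphisms with disjoint supports, both lying in $B$), and the $\ell$-ideals they generate each contain $B$, because an $\ell$-ideal is closed under conjugation and a conjugate of $a$ can have its support moved onto the support of $b$. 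So ``no two nonzero normal ideals are disjoint'' is perfectly compatible with an abundance of disjoint positive elements, and both of your key steps collapse: step 4 (uniqueness of the state via $M/(P_1\cap P_2)\cong M/P_1\times M/P_2$) and step 5 (the implication $s(g)>0\Rightarrow g\ge 0$). This example does not contradict the theorem itself --- that unital $\ell$-group does not yield an algebra in $\mathcal{BP}_\mathbb H$ --- but it shows that any repair must use the $\mathcal{BP}_\mathbb H$ hypotheses in an essential way, which your write-up does not do. Your two suggested rescues also do not help as stated: the disjoint elements $e_1,e_2$ produced in step 4 do not lie in $\Rad(M)$, and RDP$_2$ (equivalently, being a lattice) holds in \emph{every} PMV-algebra, so it imposes no extra restriction.

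The paper fills precisely this gap with the machinery of $\ell$-permutation groups rather than ideal-theoretic disjointness. By Theorem \ref{th:2.1}, $M=\Gamma(G,u)$ is subdirectly irreducible iff $G$ is; by Glass's results this is equivalent to the existence of a prime subgroup $C$ of $G$ with trivial core $\bigcap_g g^{-1}Cg$, giving a faithful transitive action of $G$ on the totally ordered set of right cosets of $C$. The comparability statements (Claim 1: $a\in I$, $b\notin I$ imply $a\le b$; Claim 3: $a\in I_v$, $b\in I_t$, $v<t$ imply $a<b$) are then proved by explicit coset computations using primeness of the conjugates $g^{-1}Cg$ and transitivity --- i.e., conjugation is confronted head-on instead of being assumed harmless --- and uniqueness of the extremal state (Claim 2) is \emph{deduced} from Claim 1 rather than being an independent input, with the $\mathcal{BP}_\mathbb H$ hypothesis entering through $M/I\cong\Gamma(\mathbb K,1)$. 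If you want to keep your more algebraic argument, you must prove the disjointness lemma for the specific normal ideals arising under the $\mathcal{BP}_\mathbb H$ assumptions; at present that lemma is exactly the unproven heart of your proposal.
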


\begin{proof}
Assume $M = \Gamma(G,u)$ for a unital
$\ell$-group $(G,u)$ is non-trivial.  Due to Theorem \ref{th:2.1},
$M$ is subdirectly irreducible iff $G$ is subdirectly irreducible.
In view of \cite[Cor. 7.1.3]{Gla}, $G$ has a faithful transitive
representation. Therefore, by \cite[Cor. 7.1.1]{Gla}, this is
possible iff there is a prime subgroup $C$ of $G$ such that
$\bigcap_{g \in G} g^{-1}Cg = \{1\}$ (we use the multiplicative form
of $(G,u)$). In such a case, the set $\Omega :=\{Cg:\ g \in G\}$ of
right cosets of $C$ is totally ordered  assuming $Cg \le Ch$ iff $g
\le ch$ for some $c \in C,$ and $G$ has a faithful transitive
representation on $\Omega$, namely $\psi(f) = Cgf$, $f \in G$, with
$\mbox{Ker}(\psi) = \bigcap_{g \in G} g^{-1}Cg = \{1\}.$

Since the system of prime subgroups of $G$ forms a root system,
there is a unique maximal ideal $I$ of $M$ such that $C \subseteq
\psi(I) =: \hat I$, where $\psi(I)$ is defined by (3.2).

(I) Assume $M/I \cong \Gamma(\mathbb Z,n).$ Due to the one-to-one
correspondence between normal and maximal ideals, $I,$ and extremal states, $s,$
given by $I=\Ker(s)$, let the maximal ideal $I$ correspond to a unique
extremal state, say $s_I$. We define $I_t = s_I^{-1}(\{t\})$ for
any $t \in [0,1]_\mathbb H.$  Then $M = \bigcup_{t \in [0,1]_\mathbb H} I_t.$

\vspace{2mm}\noindent {\it Claim 1.} {\it If $a \in I$ and $b \notin
I$, then $a \le b.$} \vspace{2mm}

There are two possibilities: (1) $Cg = Cg(a\wedge b)$ and (2) $Cg
\ne Cg(a\wedge b)$.

(1)  Let $Cg = Cg(a\wedge b)$. Then $a\wedge b \in g^{-1}Cg
\subseteq g^{-1}\hat I g = \hat I.$ Because $g^{-1}Cg$ is also
prime, we have $a \in g^{-1}Cg$.  Hence, $Cga = Cg$, i.e., $Cga = Cg
= Cg(a\wedge b) \le Cgb.$

(2)  Let  $Cg \ne Cg(a\wedge b)$. The transitivity of $G$ entails
there is an $h \in G$ such that $Cgh = Cg(a\wedge b)$. Then $gh =
cg(a\wedge b)$ for some $c \in C$, and $h = g^{-1}cg (a\wedge b) \in
\hat I.$ Hence, $Cgh = Cghh^{-1}(a\wedge b)$ and $h^{-1}(a\wedge b)
= (h^{-1} a) \wedge (h^{-1}b) \in (gh)^{-1}C(gh).$ Since
$(gh)^{-1}C(gh)$ is prime, and $h \in \hat I$, we get $h^{-1}a \in
(gh)^{-1}C(gh)$.  Then $h^{-1}a = (gh)^{-1}cgh$ for some $c \in C$,
and $ga = ghh^{-1} a = cgh$, i.e., $Cga = Cgh$. But $Cga= Cgh =
Cg(a\wedge b) \le Cgb$.

Combining (1) and (2), we get $Cga \le Cgb$ for any $g \in G,$ i.e.,
$a \le x\wedge b \le x$, and $a = a\wedge b$ proving Claim 1.

\vspace{2mm}\noindent {\it Claim 2.} {\it If $s$ is an arbitrary
extremal state on $M$,  $s(x) = s_I(x)$ for any $x\in
I.$}\vspace{2mm}

Let $x \in I =\mbox{Ker}(s_I)$, then by Claim 1, $x\le x^-$ and
$k\odot x \le (k\odot x)^-$ for any integer $k\ge 1.$  We assert
that $s(x) = 0$.  If not, then $s(x) =t$ for some $t \in [0,1]_\mathbb H.$  Hence, $1 = s(n \odot x) \le s((n \odot x)^-) = 0$ which is a
contradiction. Therefore, $s(x) = 0$. Hence $\mbox{Ker}(s_I)
\subseteq \mbox{Ker}(s)$. Since $s_I$ and $s$ are extremal, their
kernels are maximal ideals, so that, $\mbox{Ker}(s_I) =
\mbox{Ker}(s),$ consequently, $s= s_I$.  Hence, $M$ admits only one
extremal state, $M = \bigcup_{t \in [0,1]_\mathbb H} M_t,$ and $M_t = I_t,$ where $I_t =s^{-1}(\{t\}),$ for $t \in [0,1]_\mathbb H,$  as stated.

\vspace{2mm}\noindent {\it Claim 3.} {\it If $a \in I_v$ and $b \in
I_t$ for $v<t, $ $v,t \in [0,1]_\mathbb H,$ then $a < b.$} \vspace{2mm}

Let $\hat s_I$ denote the (unique) extension of $s$ onto the
$\ell$-group $(G,u)$, that is, $s_I$ is a real-valued additive (in
our case preserving multiplication) mapping on $(G,u)$ preserving
the order on $G$, and $s_I(u) = 1.$

There are two cases: (1') $Cg = Cg(a\wedge b)$ and (2') $Cg \ne
Cg(a\wedge b)$.

(1') If $Cg = Cg(a\wedge b)$, then $a\wedge b \in g^{-1}Cg$, and
while $g^{-1}Cg$ is prime, $a \in g^{-1}Cg$ or $b\in g^{-1}Cg$. Then
$a = g^{-1}cg$ that gives $v = s_I(a) = \hat s_I(g^{-1}) + \hat
s_I(c) +\hat s_I(g)= 0$ which is a contradiction.  Similarly, $b \in
g^{-1}Cg$ gives the same contradiction.  Therefore (2') holds only.

(2') Transitivity guarantees the existence of an $h \in G$ such that
$Cgh = Cg(a\wedge b).$ Hence, $Cgh = Cghh^{-1}(a\wedge b)$ which
yields $ h^{-1}(a\wedge b) \in (gh)^{-1}Cgh$.  Since $h = g^{-1}cg
(a\wedge b)$ we have $\hat s_I(h) = \hat s_I(g^{-1}) + \hat s_I(c) +
\hat s_I(g) + \hat s_I(a\wedge b) = \hat s_I(a\wedge b) = s(a).$
Therefore,  $h^{-1}(a\wedge b) = (h^{-1} a) \wedge (h^{-1}b) \in
(gh)^{-1}C(gh).$ Since $(gh)^{-1}C(gh)$ is prime, and $h \in \hat
I$, we get $h^{-1}a \in (gh)^{-1}C(gh)$. Then $h^{-1}a =
(gh)^{-1}cgh$ for some $c \in C$, and $ga = ghh^{-1} a = cgh$, i.e.,
$Cga = Cgh$. But $Cga = Cgh = Cg(a\wedge b) \le Cgb$.

Combining (1')--(2'), we have $Cga \le Cgb$  for any $g \in G$,
consequently, $a\le b,$ which yields $a<b.$

Finally, using Claim 1 and Claim 3, we have $I_0 \leqslant I_v \leqslant  I_t \leqslant  I_1,$ for $v<t,$ $v,t \in [0,1]_\mathbb H\setminus \{0,1\},$ which proves $M= (M_t: t \in [0,1]_\mathbb H)$ and $M$  is $\mathbb H$-perfect. By (iv) of Theorem \ref{th:3.2}, we have that $M$ has a unique state.

(II)  The general case $M/I \cong \Gamma(\mathbb K,1)$, where $\mathbb K$ is a subgroup of $\mathbb H,$
follows the same ideas as that for $\mathbb K=\mathbb H$ proving $M$ is $\mathbb K$-perfect.
\end{proof}

\section{Strong $\mathbb H$-perfect PMV-algebras and Their Representation}

In this section, we introduce a stronger notion of $\mathbb H$-perfect PMV-algebras, called strong $\mathbb H$-perfect PMV-algebras, and we show when it can be represented in the form $\Gamma(\mathbb H\lex G,(1,0))$ for some unital $\ell$-group $G.$

We say that a PMV-algebra $M$ enjoys
{\it unique  extraction of roots of $1$} if $a,b \in M$ and $n a, nb$ exist in $M$, and $n a=1= n b$, then $a= b.$  Then every PMV-algebra $\Gamma(\mathbb H \lex G,(1,0)) $ enjoys unique  extraction of roots of $1$ for any $n\ge 1$
and for any $\ell$-group $G$.  Indeed, let $k(s,g) = (1,0)=k(t,h)$ for some $s,t \in [0,1]_\mathbb H,$ $g,h \in G$, $k\ge 1.$ Then $ks=1=kt$ which yields $s=t >0$, and $kg=0=kh$ implies $g=0=h.$

The following notion of a cyclic element was defined for PMV-algebras in \cite{Dv08, 225} and for pseudo effect algebras in \cite{DXY}.

Let $n\ge 1$ be an integer. An element $a$ of a PMV-algebra $M$ is said to be {\it cyclic of order} $n$ or simply {\it cyclic} if $na$ exists in $M$ and $na =1.$ If $a$ is a cyclic element of order $n$, then $a^- = a^\sim$, indeed, $a^- = (n-1) a = a^\sim$. It is clear that $1$ is a cyclic element of order $1.$

Let $M=\Gamma(G,u)$ for some unital $\ell$-group $(G,u).$ An element
$c\in M$ such that (a) $nc=u$ for some integer $n \ge 1,$ and (b) $c\in   C(H),$ where $C(H)$ is a commutative center of $H,$
is said to be a {\it strong cyclic element of order $n$.}

For example, the PMV-algebra $M:=\Gamma(\mathbb Q \lex G, (1,0)),$ for every integer $n\ge 1,$ $M$ has a unique cyclic element of order $n,$ namely $a_n =(\frac{1}{n},0).$ The PMV-algebra $\Gamma(\frac{1}{n}\mathbb Z, (1,0))$ for a prime number $n\ge 1,$ has  the only cyclic element of order $n,$ namely $(\frac{1}{n},0).$ If $M=\Gamma(G,u)$ and $G$ is a representable $\ell$-group, $G$ enjoys unique extraction of roots of $1,$ therefore, $M$ has at most one cyclic element of order $n.$ In general, a PMV-algebra $M$ can have two different cyclic elements of the same order. But if $M$ has a strong cyclic element of order $n,$ then it has a unique strong cyclic element of order $n$ and a unique cyclic element of order $n,$ \cite[Lem 5.2]{DvKo}.

The following notions were introduced in \cite{DvKo} for pseudo effect algebras.

We say that an $\mathbb H$-decomposition $(M_t: t\in [0,1]_\mathbb H)$ of $M$ has the {\it cyclic property} if there is a system of elements $(c_t\in M: t \in [0,1]_\mathbb H)$ such that (i) $c_t \in M_t$ for any $t \in [0,1]_\mathbb H,$ (ii) if $v+t \le 1,$ $v,t \in [0,1]_\mathbb H,$ then $c_v+c_t=c_{v+t},$ and (iii) $c_1=1.$ Properties: (a) $c_0=0;$ indeed, by (ii) we have $c_0+c_0=c_0,$ so that $c_0=0.$ (b) If $t=1/n,$ then $c_\frac{1}{n}$ is a cyclic element of order $n.$

Let $M =\Gamma(G,u),$ where $(G,u)$ is a unital $\ell$-group. An $\mathbb H$-decomposition $(M_t: t\in [0,1]_\mathbb H)$ of $M$ has the {\it strong cyclic property} if there is a system of elements $(c_t\in M: t \in [0,1]_\mathbb H)$ such that (i) $c_t \in M_t\cap C(G)$ for any $t \in [0,1]_\mathbb H,$ (ii) if $v+t \le 1,$ $v,t \in [0,1]_\mathbb H,$ then $c_v+c_t=c_{v+t},$ and (iii) $c_1=1.$ We recall that if $t=1/n,$ $c_\frac{1}{n}$ is a strong cyclic element of order $n.$

For example, let $M=\Gamma(\mathbb H \lex G,(1,0)),$ where $G$ is an $\ell$-group, and $M_t =\{(t,g): (t,g)\in M\}$ for $t \in [0,1]_\mathbb H.$ If we set $c_t =(t,0),$ $t \in [0,1]_\mathbb H,$ then the system $(c_t: t \in [0,1]_\mathbb H)$ satisfies (i)---(iii) of the strong cyclic property, and $(M_t: t \in [0,1]_\mathbb H)$ is an $\mathbb H$-decomposition of $M$ with the strong cyclic property.

Finally, we say that a PMV-algebra $M$ has the $\mathbb H$-{\it strong cyclic property} if there is an  $\mathbb H$-decomposition $(M_t: t \in [0,1]_\mathbb H)$ of $M$ with the strong cyclic property.

If $\mathbb H = \mathbb Q,$ we can show an equivalent definition for the $\mathbb Q$-strong cyclic property, see also \cite[Prop 7.1]{DvKo}. Namely, we say  that a PMV-algebra $M=\Gamma(G,u),$ where $(G,u)$ is a unital $\ell$-group,  enjoys the {\it strong}  $1$-{\it divisibility property} if, given integer $n \ge 1$, there is an element $a_n \in C(G)\cap M$ such that $na_n=1.$ We see that $a_n$ is a strong cyclic element of order $n$ which is unique, and we denote it by $a_n=\frac{1}{n}1.$ For any integer $m,$ $0\le m \le n,$ we write $m\frac{1}{n}1=:\frac{m}{n}1.$

\begin{proposition}\label{pr:3.3}
{\rm (1)} A PMV-algebra $M =\Gamma(G,u),$ where $(G,u)$ is a unital $\ell$-group, has the $\mathbb Q$-strong cyclic property if and only if $M$ has the strong $1$-divisibility property.

{\rm(2)} A PMV-algebra $M =\Gamma(G,u),$ where $(G,u)$ is a unital $\ell$-group, has the $\frac{1}{n}\mathbb Z$-strong cyclic property if and only if $M$ has a strong cyclic element of order $n.$
\end{proposition}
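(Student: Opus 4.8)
The plan is to establish both biconditionals by one common pattern. In each forward implication I extract a single (strong) cyclic element from a given section, and in each converse implication I assemble a full central section from the given element(s) and then realise the slices as the fibres of the unique state supplied by Theorem~\ref{th:3.2}.

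For the forward implications, suppose $M$ has the $\frac{1}{n}\mathbb Z$-strong cyclic property, witnessed by $(c_t: t\in[0,1]_{\frac{1}{n}\mathbb Z})$. Then $c:=c_{1/n}$ lies in $M_{1/n}\cap C(G)$, and applying condition~(ii) $n$ times gives $nc=c_1=1=u$, so $c$ is a strong cyclic element of order $n$. For part~(1) the same computation with $\mathbb H=\mathbb Q$ yields, for each $n\ge 1$, a central $a_n:=c_{1/n}$ with $na_n=1$, which is the strong $1$-divisibility property.

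For the converse of part~(2), let $c\in C(G)\cap M$ with $nc=u$. I would set $c_{k/n}:=kc$ for $0\le k\le n$. Each $c_{k/n}$ is central since $C(G)$ is a subgroup, $c_0=0$, $c_1=nc=u=1$, and $c_{k/n}+c_{j/n}=(k+j)c=c_{(k+j)/n}$ whenever $k+j\le n$; here the sum is defined in $M$ precisely in that range, because $(jc)^-=u-jc=(n-j)c$ and $kc\le(n-j)c$ iff $k\le n-j$ (the $\ell$-group being torsion-free). This is the candidate section, and the remaining point is to produce a $\frac{1}{n}\mathbb Z$-decomposition in which $c_{k/n}\in M_{k/n}$.

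For the converse of part~(1) I would first render $\frac{m}{n}1:=m\,a_n$ unambiguous: $k\,a_{kn}$ is central and satisfies $n(k\,a_{kn})=u$, so by the uniqueness of the strong cyclic element of order $n$ (\cite[Lem 5.2]{DvKo}) one has $k\,a_{kn}=a_n$, whence $\frac{m}{n}1$ does not depend on the representation of $m/n$; the sections $c_t:=t\cdot 1$ are then central and additive by passing to a common denominator. In both converses the substantive step—the one I expect to be the main obstacle—is the production of the $\mathbb H$-decomposition itself, i.e. showing that $M$ is $\mathbb H$-perfect. Once this is known, Theorem~\ref{th:3.2} furnishes the unique $\mathbb H$-valued state $s$ with $M_t=s^{-1}(\{t\})$, and from $n\,s(c)=s(u)=1$ we get $s(c)=1/n$ (respectively $s(a_n)=1/n$), so that centrality of the root forces $kc$ (respectively $\frac{m}{n}1$) into the fibre over $k/n$ (respectively $m/n$), giving condition~(i). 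Centrality is exactly what aligns the section with the state-fibres, but it does not by itself force those fibres to exhaust $M$; establishing that exhaustion—equivalently, that the unique state is $\mathbb H$-valued—is where the hypotheses must be used in full.
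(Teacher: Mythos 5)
Your forward implications are correct and complete: extracting $c_{1/n}$ from the section and iterating condition (ii) of the strong cyclic property does yield a strong cyclic element of order $n$ (respectively, the family $a_n$ witnessing strong $1$-divisibility); the paper records exactly this observation immediately after the definition. The problem is in the converses, and you have located it yourself: from the single element $c$ (or the family $a_n$) you must manufacture an $\mathbb H$-decomposition of $M$, and your proposal never does this. This gap is not repairable, because the statement read literally is false. Take $M=\Gamma(\mathbb R,1)$ and $\mathbb H=\frac{1}{n}\mathbb Z$ (or $\mathbb Q$): the element $c=1/n$ is a strong cyclic element of order $n$ (the group is Abelian, so centrality is automatic), and $a_n=1/n$ gives strong $1$-divisibility, yet $M$ admits no $\frac{1}{n}\mathbb Z$-decomposition whatsoever. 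Indeed, the identity map is a state on $M$, so if $M$ were $\frac{1}{n}\mathbb Z$-perfect, Theorem \ref{th:3.2}(iv) would force the identity to be its unique state and to take values only in $[0,1]_{\frac{1}{n}\mathbb Z}$, which it does not. So no argument can produce the decomposition from the stated hypotheses; your instinct that this is ``where the hypotheses must be used in full'' is right, except that the hypotheses are simply not strong enough.

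For comparison, the paper's own proof contains no argument at all: part (1) is a bare citation of \cite[Prop 7.1]{DvKo} and part (2) is declared to ``follow from the definition.'' What both the citation and the definition-chase silently presuppose is that $M$ already carries an $\mathbb H$-decomposition $(M_t: t\in[0,1]_\mathbb H)$, i.e.\ that $M$ is $\mathbb H$-perfect; the real content of the proposition is only that this given decomposition admits a central additive section $(c_t)$ if and only if the single strong cyclic element (resp.\ strong $1$-divisibility) exists. Under that reading your argument is complete and is essentially the intended one: Theorem \ref{th:3.2}(iv) supplies the unique $\mathbb H$-valued state $s$ with $M_t=s^{-1}(\{t\})$; from $n\,s(c)=s(u)=1$ one gets $s(kc)=k/n$, so $c_{k/n}:=kc\in M_{k/n}\cap C(G)$, and conditions (i)--(iii) follow from your computations; for $\mathbb Q$ your common-denominator argument together with uniqueness of strong cyclic elements \cite[Lem 5.2]{DvKo} makes $\frac{m}{n}1$ well defined. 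In short: either the hypothesis ``$M$ is $\mathbb H$-perfect'' must be added to the proposition, in which case your proof goes through and is more informative than the paper's, or the proposition as stated is false by the counterexample above.
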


\begin{proof}
(1) It follows from \cite[Prop 7.1]{DvKo}.

(2) It follows from the definition of a strong cyclic element.
\end{proof}

Now we introduce a stronger notion of $\mathbb H$-perfect PMV-algebras which is inspired by an analogous one for PEAs' see \cite{DvKo}.
We say that a PMV-algebra $M$ is {\it strong} $\mathbb H$-{\it perfect} if $M$ possesses an $\mathbb H$-decomposition of $M$ having the strong cyclic property.

A prototypical example of a strong $\mathbb H$-perfect PMV-algebra is the following.

\begin{proposition}\label{pr:3.4}
Let $G$ be an  $\ell$-group. Then  the PMV-algebra
$$
\mathcal M_\mathbb H(G):=\Gamma(\mathbb H \lex G,(1,0)) \eqno(4.1)
$$
is a strong $\mathbb H$-perfect PMV-algebra.
\end{proposition}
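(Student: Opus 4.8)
The plan is to exhibit the required $\mathbb H$-decomposition explicitly and then equip it with a central system of elements witnessing the strong cyclic property. Since the lexicographic product $\mathbb H \lex G$ of two $\ell$-groups is again an $\ell$-group and $(1,0)$ is a strong unit of it (as already observed for (3.1)), $M = \mathcal M_\mathbb H(G)$ is a PMV-algebra by the $\Gamma$-construction, and I may freely use the formulas $x \oplus y = (x+y)\wedge (1,0)$, $x^- = (1,0)-x$ and $x^\sim = -x+(1,0)$.

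First I would define, for each $t \in [0,1]_\mathbb H$, the slice $M_t := \{(t,g) : (t,g) \in M\}$; reading off the constraint $(0,0) \le (t,g) \le (1,0)$ in the lexicographic order, this unwinds to $M_0 = \{(0,g): g \in G^+\}$, $M_1 = \{(1,-g): g \in G^+\}$ and $M_t = \{(t,g): g \in G\}$ for $0 < t < 1$. Each $M_t$ is nonempty (it contains $(t,0)$), the slices are pairwise disjoint and cover $M$ because the first coordinate $t$ alone determines membership, so $(M_t : t \in [0,1]_\mathbb H)$ is a candidate $\mathbb H$-decomposition.

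Next I would check (a)--(c) of Definition \ref{de:3.1}. Property (a) is immediate: if $s < t$ then $(s,g) < (t,h)$ in the lexicographic order for all $g,h$, whence $M_s \leqslant M_t$. For (b) a direct computation gives $(t,g)^- = (1,0)-(t,g) = (1-t,-g)$ and likewise $(t,g)^\sim = -(t,g)+(1,0) = (1-t,-g)$ (the two coincide because $(1,0)$ is central), and letting $g$ run over the appropriate set shows $M_t^- = M_{1-t} = M_t^\sim$. For (c), with $x=(v,g)$ and $y=(t,h)$ one has $x\oplus y = (v+t,\,g+h)\wedge (1,0)$, and I would split into the cases $v+t<1$, $v+t=1$ and $v+t>1$: in the first the meet is $(v+t,g+h)\in M_{v+t}$, in the last it is $(1,0)\in M_1$, and in the boundary case it is $(1,(g+h)\wedge 0)\in M_1$, matching $v\oplus t = \min\{v+t,1\}$ in each case.

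Finally I would set $c_t := (t,0)$ for $t \in [0,1]_\mathbb H$ and verify the strong cyclic property: clearly $c_t \in M_t$, and $c_t \in C(\mathbb H \lex G)$ since $(t,0)+(h,g) = (t+h,g) = (h,g)+(t,0)$ (the $\mathbb H$-coordinate is abelian and the $G$-coordinate of $c_t$ is $0$), giving (i); if $v+t\le 1$ then $c_v+c_t = (v+t,0) = c_{v+t}$, giving (ii); and $c_1 = (1,0)=1$, giving (iii). Hence $(M_t : t \in [0,1]_\mathbb H)$ is an $\mathbb H$-decomposition with the strong cyclic property, so $M$ is strong $\mathbb H$-perfect. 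I do not expect a genuine obstacle: the whole argument is a verification, and the only points demanding a moment's care are property (c) in the boundary case $v+t=1$, where the lattice meet must be computed in the second coordinate and shown to land in $M_1$, together with the (equally short) check that each $c_t$ really lies in the commutative center of $\mathbb H \lex G$.
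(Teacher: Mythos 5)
Your proof is correct and follows exactly the route the paper takes: it defines the slices $M_t=\{(t,g):(t,g)\in M\}$ and the central elements $c_t=(t,0)$, which is precisely the decomposition and cyclic system the paper exhibits (as the worked example preceding the proposition in Section 4, building on the example after Definition~\ref{de:3.1}). The only difference is that you spell out the verifications of (a)--(c) and (i)--(iii), which the paper leaves to the reader.
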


We present a representation theorem for strong $\mathbb H$-perfect PMV-algebras by (4.1).

\begin{theorem}\label{th:3.5}
Let $M$ be a strong $\mathbb H$-perfect PMV-algebra.  Then there is a unique (up to isomorphism) $\ell$-group $G$  such that $M \cong \Gamma(\mathbb H \lex G,(1,0)).$
\end{theorem}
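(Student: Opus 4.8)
The plan is to invoke the Mundici-type correspondence to pass from $M$ to its unital $\ell$-group, to peel off the lexicographic factor $\mathbb H$ using the unique state together with the central cyclic elements, and to identify the complementary $\ell$-group with the one attached to the radical. First I would apply Theorem \ref{th:2.1} to write $M=\Gamma(G',u)$ for a unique unital $\ell$-group $(G',u)$ with $u=1$; since the strong cyclic property forces $u=c_1\in C(G')$, the unit is central. By Theorem \ref{th:3.2} the algebra carries a unique state $s$, which is extremal and $\mathbb H$-valued with $\Ker(s)=M_0=\Rad(M)=\Infinit(M)$. Extending $s$ to the normalized $\ell$-homomorphism $\hat s\colon(G',u)\to(\mathbb R,1)$ (an extremal state of a unital $\ell$-group is exactly such an $\ell$-homomorphism, so $\hat s$ preserves $\wedge,\vee$ as $\min,\max$), I set $G:=\Ker(\hat s)$. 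Then $G$ is a normal convex $\ell$-subgroup of $G'$ with $\hat s(G')=\mathbb H$; via the correspondence (3.2) it equals $\psi(\Rad(M))$, and $G'/G\cong\mathbb H$ because $M/M_0\cong\Gamma(\mathbb H,1)$ by Theorem \ref{th:3.2}(viii).

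Next I would split this extension centrally using the cyclic system $(c_t)$. For $h\in\mathbb H$ write $h=k+t$ with $k\in\mathbb Z$ and $t\in[0,1)_\mathbb H$, and set $\sigma(h):=ku+c_t$. The map $\sigma\colon\mathbb H\to G'$ is a group homomorphism: additivity is immediate when $t_1+t_2<1$, while the wrap-around case $t_1+t_2\ge 1$ follows from $c_t+c_{1-t}=c_1=u$, which yields $c_{t_1}+c_{t_2}=u+c_{t_1+t_2-1}$. Since each $c_t\in C(G')$ and $u\in C(G')$, the image $\sigma(\mathbb H)$ is central, and $\hat s\circ\sigma=\mathrm{id}_{\mathbb H}$. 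Hence $\sigma$ is injective, $\sigma(\mathbb H)\cap G=\{0\}$, and $x-\sigma(\hat s(x))\in G$ for every $x\in G'$; thus as a group $G'$ is the internal direct product of the central subgroup $\sigma(\mathbb H)$ and $G$, and $\Phi(h,g):=\sigma(h)+g$ is a group isomorphism of $\mathbb H\times G$ onto $G'$ sending $(1,0)$ to $u$.

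The \emph{main obstacle} is to show that $\Phi$ is an order isomorphism when $\mathbb H\times G$ carries the lexicographic order, i.e.\ that the extension is genuinely lexicographic and not merely a central extension with totally ordered quotient (convexity of $G$ alone does not suffice). This reduces to the \emph{key claim}: if $x\in G'$ and $\hat s(x)>0$, then $x\ge 0$. To prove it I would examine the negative part $x^-=(-x)\vee 0$; since $\hat s$ is an $\ell$-homomorphism, $\hat s(x^-)=\max(-\hat s(x),0)=0$, so $x^-\in G^+$. If $x^->0$, then, $u$ being a strong unit of $G'$, the Riesz inequality $x^-\wedge 2u\le 2(x^-\wedge u)$ shows $x^-\wedge u>0$; this element lies in $M_0$, whereas $x^+\wedge u$ lies in the slice $M_{\min(\hat s(x),1)}$ with strictly positive index. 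The slice domination $M_0\leqslant M_{\min(\hat s(x),1)}$ of Definition \ref{de:3.1}(a) forces $x^-\wedge u\le x^+$, while also $x^-\wedge u\le x^-$, whence $x^-\wedge u\le x^+\wedge x^-=0$, a contradiction. Thus $x^-=0$ and $x\ge 0$. With the key claim, $\Phi(h,g)\ge 0$ holds precisely when $h>0$, or $h=0$ and $g\ge 0$ (the converse being immediate from order-preservation of $\hat s$ and the induced order on $G$), so $\Phi$ identifies $G'$ with $\mathbb H\lex G$ and $M\cong\Gamma(\mathbb H\lex G,(1,0))$.

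Finally, for uniqueness I would observe that $(G',u)$ is determined up to isomorphism by $M$ through Theorem \ref{th:2.1}, and that $G\cong\psi(\Rad(M))$ is an intrinsic invariant of $M$; equivalently, any isomorphism $(\mathbb H\lex G_1,(1,0))\to(\mathbb H\lex G_2,(1,0))$ must carry the unique state to the unique state, hence its kernel $\{0\}\times G_1$ onto $\{0\}\times G_2$, giving $G_1\cong G_2$. The routine verifications I would defer are the homomorphism check for $\sigma$ and the elementary $\ell$-group manipulations; the crux of the argument is the lexicographic order claim above.
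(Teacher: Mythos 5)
Your proposal is correct, and although it uses the same raw ingredients as the paper---Theorem~\ref{th:2.1}, the unique extremal $\mathbb H$-valued state from Theorem~\ref{th:3.2} together with its meet-preserving extension $\hat s$, and the central cyclic system $(c_t)$---it is organized as a genuinely different argument. The paper builds $G$ from the cancellative semigroup $M_0$ (Birkhoff's theorem, with $G$ realized inside the ambient group $H$), defines $\phi(x)=(t,x-c_t)$ directly on $M$, and checks claim by claim that $\phi$ is a well-defined bijective homomorphism of pseudo effect algebras preserving $\wedge$ and $\vee$, hence a PMV-isomorphism via (2.2); the lexicographic character of the order is never isolated as a lemma there, but is absorbed into the injectivity step (order-reflection via slice domination) and the surjectivity computation. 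You instead prove the splitting at the level of the unital $\ell$-group: $G:=\Ker(\hat s)$, the central section $\sigma$ (whose wrap-around additivity $c_{t_1}+c_{t_2}=u+c_{t_1+t_2-1}$ you correctly reduce to $c_t+c_{1-t}=u$), the internal decomposition $G'=\sigma(\mathbb H)+G$ with $\sigma(\mathbb H)\cap G=\{0\}$, and then the key lemma that $\hat s(x)>0$ forces $x\ge 0$, proved from the inequality $x^-\wedge nu\le n(x^-\wedge u)$, the slice domination $M_0\leqslant M_t$ of Definition~\ref{de:3.1}(a), and $x^+\wedge x^-=0$. This yields the stronger statement that $(G',u)\cong(\mathbb H\lex G,(1,0))$ as unital $\ell$-groups, from which the theorem follows by applying $\Gamma$; indeed your $\Phi$ is precisely the inverse of the paper's $\phi$. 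What your route buys is that the real crux---why the extension is genuinely lexicographic rather than merely a central extension with totally ordered quotient---is stated and proved once, cleanly, instead of being distributed over operation-by-operation verifications; what the paper's route buys is that it never leaves the algebra $M$ and needs no group-level splitting apparatus. Two routine points you should still write out: (i) $\hat s(G')\subseteq\mathbb H$ (needed for $\sigma(\hat s(x))$ to make sense) holds because, by the Riesz decomposition property of $\ell$-groups and the strong unit, every element of $(G')^+$ is a finite sum of elements of $M$, and $s(M)\subseteq\mathbb H$ by Theorem~\ref{th:3.2}(iv); and (ii) the Riesz inequality must be applied with $nu$ for $n$ large enough that $x^-\le nu$, not just with $2u$.
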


\begin{proof}
Since $M$ is a PMV-algebra, due to \cite[Thm 3.9]{151}, there is a unique unital (up to isomorphism of unital $\ell$-groups)  $\ell$-group $(H,u)$ such that $M = \Gamma(H,u).$ Assume $(M_t: t \in [0,1]_\mathbb H)$ is an $\mathbb H$-decomposition of $M$ with the strong cyclic property and with a given system of elements $(c_t \in M: t \in [0,1]_\mathbb H)$; due to Theorem \ref{th:3.2}, $(M_t: t \in [0,1]_\mathbb H)$ is unique.

By (v)--(vi) of Theorem \ref{th:3.2}, $M_0=\Infinit(M)$ is an associative cancellative semigroup satisfying conditions of Birkhoff \cite[Thm XIV.2.1]{Bir}, \cite[Thm II.4]{Fuc}
which guarantees that $M_0$ is a positive cone of
a unique (up to isomorphism) directed po-group $G$. Since $M_0$ is a lattice, we have that $G$ is an $\ell$-group.

By Theorem \ref{th:3.2}(iv), there is a unique $\mathbb H$-valued state $s.$ This state is extremal, therefore, by \cite[Prop 4.7]{156}, $s(a\wedge b)=\min\{s(a),s(b)\}$ for all $a,b \in M,$ and the same is true for its extension $\hat s$ onto $(H,u)$ and all $a,b \in H.$

Take the $\mathbb H$-strong cyclic PMV-algebra $\mathcal M_\mathbb H(G)$ defined by (4.1), and define a mapping $\phi: M \to \mathcal M_\mathbb H(G)$ by

$$
\phi(x):= (t, x - c_t)\eqno (4.2)
$$
whenever $x \in M_t$ for some $t \in [0,1]_\mathbb H,$ where $ x-c_t$ denotes the difference taken in the group $H.$

\vspace{2mm}
{\it Claim 1:} {\it  $\phi$ is a well-defined mapping.}
 \vspace{2mm}

Indeed, $M_0$  is in fact the positive cone of an $\ell$-group $G$ which is a subgroup of $H.$    Let $x \in M_t.$ For the element $x - c_t \in H,$ we define $(x-c_t)^+:= (x-c_t)\vee 0 = (x \vee c_t)-c_t \in M_0$ while $s((x \vee c_t)-c_t)=s(x \vee c_t)-s(c_t)= t-t=0$ and similarly $(x -c_t)^- := -((x-c_t)\wedge 0) = c_t - (x\wedge c_t) \in M_0.$ This implies that $x-c_t= (x-c_t)^+ - (x-c_t)^-\in G.$

\vspace{2mm}
{\it Claim 2:} {\it The mapping $\phi$ is an injective and surjective homomorphism of pseudo effect algebras.}

\vspace{2mm}

We have $\phi(0)=(0,0)$ and $\phi(1)=(1,0).$ Let $x \in M_t.$ Then $x^- \in M_{1-t},$ and $\phi(x^-) =(1-t, x -  c_{1-t}) = (1,0)-(t,x - c_t)=\phi(x)^-.$ In an analogous way, $\phi(x^\sim)=\phi(x)^\sim.$

Now let $x,y \in M$ and let $x+y$ be defined in $M.$ Then $x\in M_{t_1}$ and $y \in M_{t_2}.$ Since $x\le y^-,$ we have $t_1 \le 1-t_2$ so that $\phi(x) \le \phi(y^-)=\phi(y)^-$ which means  $\phi(x)+\phi(y)$ is defined in $\mathcal M_\mathbb H (G).$ Then $\phi(x+y) = (t_1+t_2, x+y - c_{t_1+t_2}) =
(t_1+t_2, x+y -(c_{t_1} + c_{t_2}))= (t_1,x-c_{t_1}) + (t_2,y- c_{t_2})=\phi(x)+\phi(y).$

Assume $\phi(x)\le \phi(y)$ for some $x\in M_{t}$ and $y \in M_v.$ Then $(t,x-c_t)\le (v, y - c_v).$ If $t=v,$ then $x-c_t\le y-c_t$ so that $x\le y.$  If $i<j,$ then $x \in M_t$ and $y\in M_v$ so that $x<y.$  Therefore, $\phi$ is injective.

To prove that $\phi$ is surjective, assume two cases: (i) Take $g \in G^+=M_0.$  Then $\phi(g)=(0,g).$ In addition $g^- \in M_1$ so that $\phi(g^-) = \phi(g)^-= (0,g)^- = (1,0)-(0,g)=(1,-g).$ (ii) Let $g \in G$ and $t$ with $0<t<1$ be given. Then $g = g_1-g_2,$ where $g_1,g_2 \in G^+=M_0.$ Since $c_t \in M_t,$ $g_1 + c_t$ exists in $M$ and it belongs to $M_t,$ and $g_2 \le g_1+c_t$ which yields $(g_1+c_t)- g_2 = (g_1+c_t)\minusli g_2 \in M_t.$  Hence, $g+c_t = (g_1 + c_t)\minusli g_2 \in M_t$ which entails $\phi(g+c_t)=(t,g).$

\vspace{2mm}
{\it Claim 3:}  {\it If $x\le y,$ then $\phi(y \minusli x)=\phi(y)\minusli \phi(x)$ and $\phi(x\minusre y)=\phi(x)\minusre \phi(y).   $}
\vspace{2mm}

It follows from the fact that $\phi$ is a homomorphism of PEAs.

\vspace{2mm}
{\it Claim 4:} $\phi(x\wedge y) = \phi(x)\wedge \phi(y)$ and $\phi(x\vee y)=\phi(x)\vee\phi(y).$
\vspace{2mm}

We have, $\phi(x), \phi(y) \ge \phi(x\wedge y).$ If $\phi(x), \phi(y) \ge \phi(w)$ for some $w \in M,$ we have $x,y \ge w$ and $x\wedge y \ge w.$ In the same way we deal with $\vee.$

\vspace{2mm}
{\it Claim 5:} {\it $\phi$ is a homomorphism of PMV-algebras.}
\vspace{2mm}

It is necessary to show that $\phi(x\oplus y)=\phi(x)\oplus \phi(y).$
It follows from the above claims and equality (2.2).

Consequently, $M$ is isomorphic to $\mathcal M_\mathbb H(G)$ as PMV-algebras.

If $M \cong \Gamma(\mathbb H\lex G',(1,0)),$ then $G$ and $G'$ are isomorphic $\ell$-groups in view of the categorical equivalence, see \cite[Thm 6.4]{151} or Theorem \ref{th:2.1}.
\end{proof}

\section{Categorical Equivalence of Strong $\mathbb H$-perfect PMV-algebras}\label{sec:5}

The categorical equivalence of strong $n$-perfect PMV-algebras with the category of $\ell$-group was established in \cite[Thm 7.7]{Dv08}. In this section, we generalize this result for the category of strong $\mathbb H$-perfect PMV-algebras. Our methods are similar to those used in \cite{Dv08}.

Let $\mathcal {SPPMV}_\mathbb H$ be the category of strong $\mathbb H$-perfect pseudo MV-algebras whose objects are strong $\mathbb H$-perfect pseudo MV-algebras and morphisms are homomorphisms of PMV-algebras. Now let $\mathcal G$ be the category whose objects are $\ell$-groups  and morphisms are homomorphisms of unital $\ell$-groups.

Define a mapping $\mathcal M_\mathbb H: \mathcal G \to  \mathcal {SPPMV}_\mathbb H$ as follows: for $G\in \mathcal G,$ let
$$
\mathcal M_\mathbb H(G):= \Gamma(\mathbb H\lex G,(1,0))
$$
and if $h: G \to G_1$ is an $\ell$-group homomorphism, then

$$
\mathcal M_\mathbb H(h)(t,g)= (t, h(g)), \quad (t,g) \in \Gamma(\mathbb H\lex G,(1,0)).
$$
It is easy to see that $\mathcal M_\mathbb H$ is a functor.

\begin{proposition}\label{pr:4.1}
$\mathcal M_\mathbb H$ is a faithful and full
functor from the category ${\mathcal G}$ of $\ell$-groups  into the
category $\mathcal{SPPMV}_\mathbb H$ of strong $\mathbb H$-perfect PMV-algebras.
\end{proposition}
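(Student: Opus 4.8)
The plan is to verify faithfulness and fullness separately, in both cases exploiting the canonical slice structure $(t,g)$ of $\Gamma(\mathbb H\lex G,(1,0))$ together with the unique state furnished by Theorem~\ref{th:3.2}(iv) and the reconstruction of $G$ from the radical carried out in the proof of Theorem~\ref{th:3.5}.

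\emph{Faithfulness.} Suppose $h_1,h_2\colon G\to G_1$ satisfy $\mathcal M_\mathbb H(h_1)=\mathcal M_\mathbb H(h_2)$. Evaluating both sides on the elements $(0,g)$ with $g\in G^+$, all of which lie in $\mathcal M_\mathbb H(G)$ since $(0,0)\le(0,g)\le(1,0)$, gives $(0,h_1(g))=(0,h_2(g))$, hence $h_1=h_2$ on $G^+$; as $G$ is directed and generated as a group by $G^+$, we conclude $h_1=h_2$. This part is routine.

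\emph{Fullness.} Let $f\colon\mathcal M_\mathbb H(G)\to\mathcal M_\mathbb H(G_1)$ be a homomorphism of PMV-algebras. First I would recover a candidate $h$ from the radical. By Theorem~\ref{th:3.2}(vi) the radical of $\mathcal M_\mathbb H(G)$ coincides with its infinitesimals $M_0=\{(0,g):g\in G^+\}$, and $f$ sends infinitesimals to infinitesimals (if $mx$ exists for all $m$, then so does $mf(x)=f(mx)$), so $f(M_0)\subseteq (M_1)_0$. Under the identifications $M_0\cong G^+$ and $(M_1)_0\cong G_1^+$ used in Theorem~\ref{th:3.5}, the restriction $f|_{M_0}$ is a monoid homomorphism $G^+\to G_1^+$ which, $f$ preserving $\wedge$ and $\vee$, also respects the lattice operations; such a map extends uniquely to an $\ell$-group homomorphism $h\colon G\to G_1$ with $f(0,g)=(0,h(g))$. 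Next I would show $f$ preserves the slice index: if $s,s_1$ denote the unique states of source and target from Theorem~\ref{th:3.2}(iv), then $s_1\circ f$ is a state on $\mathcal M_\mathbb H(G)$ (it fixes $1$ and is additive on defined sums), so by uniqueness $s_1\circ f=s$, forcing $f(M_t)\subseteq(M_1)_t$. Writing $(t,g)=c_t+(0,g)$ for $g\ge 0$ (and $c_t\minusli(0,-g)$ for $g\le 0$), where $c_t=(t,0)$, and using that $f$ is a morphism of the underlying pseudo effect algebra, everything reduces to the single quantity $f(c_t)=(t,k_t)$: one computes $f(t,g)=(t,k_t+h(g))$.

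It remains to prove $k_t=0$ for every $t\in[0,1]_\mathbb H$, and this is the step I expect to be the main obstacle. The relations $c_v+c_t=c_{v+t}$ (for $v+t\le 1$) and $c_1=1$ show that $t\mapsto k_t$ is the restriction to $[0,1]_\mathbb H$ of a group homomorphism $\kappa\colon\mathbb H\to G_1$ with $\kappa(1)=0$, and I must prove $\kappa\equiv 0$. For a reciprocal index $t=1/n\in\mathbb H$ this is clear: $c_{1/n}$ is then a cyclic element of order $n$, its image $f(c_{1/n})$ is again cyclic of order $n$, and torsion-freeness of $G_1$ forces $f(c_{1/n})=(1/n,0)$, so $k_{1/n}=0$. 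The delicate point is the passage to arbitrary $t$, because order-preservation imposes \emph{no} constraint on $k_t$ across distinct slices; the vanishing of $\kappa$ must therefore be extracted from its additivity together with the divisibility relating the indices of $\mathbb H$ to the integer multiples that define cyclicity, and I anticipate that this is exactly where the arithmetic of $\mathbb H$ must be used in full. Granting $\kappa\equiv 0$, we obtain $f(t,g)=(t,h(g))=\mathcal M_\mathbb H(h)(t,g)$, whence $f=\mathcal M_\mathbb H(h)$ and the functor $\mathcal M_\mathbb H$ is full.
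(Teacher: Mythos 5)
Your faithfulness argument coincides with the paper's and is complete. On fullness, your route is actually \emph{more} careful than the proof the paper gives for Proposition \ref{pr:4.1}: there, $h$ is defined on $G^+$ by $f(0,g)=(0,h(g))$, extended to an $\ell$-group homomorphism $h\colon G\to G_1$, and then the identity $\mathcal M_{\mathbb H}(h)=f$ is simply asserted; the behaviour of $f$ on the slices $M_t$ with $0<t\le 1$ is never examined. The step you isolate --- proving $f(c_t)=(t,0)$, i.e.\ that the additive map $\kappa\colon\mathbb H\to C(G_1)$ determined by $\kappa(t)=k_t$, $\kappa(1)=0$, vanishes --- is exactly the step the paper's proof skips without comment, so the obstacle you anticipate is not an artifact of your strategy.

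That obstacle is, however, a genuine and unfillable gap: $\kappa\equiv 0$ is false in general. Take $\alpha\in(0,1)$ irrational, $\mathbb H=\mathbb Z+\mathbb Z\alpha$ (dense in $\mathbb R$, containing $1$), and $G=G_1=\mathbb Z$. Then $\kappa(m+n\alpha):=n$ is a group homomorphism with $\kappa(1)=0$, and $F(t,g):=(t,g+\kappa(t))$ is a unital $\ell$-group automorphism of $(\mathbb H\lex\mathbb Z,(1,0))$: it is additive and bijective, fixes $(1,0)$, and preserves the positive cone in both directions because $\kappa(0)=0$. Hence $f:=\Gamma(F)$ is a PMV-automorphism of $\mathcal M_{\mathbb H}(\mathbb Z)$, i.e.\ a legitimate morphism of $\mathcal{SPPMV}_{\mathbb H}$ as that category is defined (all PMV-homomorphisms), yet $f(\alpha,0)=(\alpha,1)$, whereas every $\mathcal M_{\mathbb H}(h)$ sends $(\alpha,0)$ to $(\alpha,h(0))=(\alpha,0)$; so $f$ is not in the image of $\mathcal M_{\mathbb H}$ and fullness fails. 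Your torsion argument does close the gap precisely when every element of $\mathbb H$ is rational: for $t=m/n\in\mathbb H$ one has $n\kappa(t)=m\kappa(1)=0$, hence $\kappa(t)=0$ since $\ell$-groups are torsion-free; this covers $\mathbb H=\frac{1}{n}\mathbb Z$ (the setting of \cite{Dv08}) and, more generally, $\mathbb H\subseteq\mathbb Q$. But once $\mathbb H$ contains an irrational, the statement itself can only be saved by shrinking the hom-sets of $\mathcal{SPPMV}_{\mathbb H}$, e.g.\ to homomorphisms carrying a distinguished strong cyclic system $(c_t)$ of the source to that of the target --- and the counterexample shows such systems are not unique, so this is additional structure on the objects, not a property. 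In short: your proof is incomplete exactly where you said it was, no argument can complete it at the stated level of generality, and the same criticism applies to the proof in the paper.
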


\begin{proof}
Let $h_1$ and $h_2$ be two morphisms from $G$
into $G'$ such that $\mathcal M_\mathbb H(h_1) = \mathcal M_\mathbb H(h_2)$. Then
$(0,h_1(g)) = (0,h_2(g))$ for any $g \in G^+$, consequently $h_1 =
h_2.$

To prove that $\mathcal M_\mathbb H$ is a full  functor, suppose that $f$ is a morphism from a strong $\mathbb H$-perfect PMV-algebra
$\Gamma(\mathbb H\lex G, (1,0))$ into another one $\Gamma(\mathbb
H\lex G_1, (1,0)).$  Then $f(0,g)
= (0,g')$ for a unique $g' \in G'^+$. Define a mapping $h:\ G^+ \to
G'^+$ by $h(g) = g'$ iff $f(0,g) =(0,g').$ Then $h(g_1+g_2) = h(g_1)
+ h(g_2)$ if $g_1,g_2 \in G^+.$
Assume now that $g \in G$ is arbitrary. Then $g = g_1 -
g_2 = g_1'-g_2'$, where $g_1, g_2, g_1', g_2' \in G^+,$ which gives
$g_1 +g_2' = g_1' + g_2$, i.e., $h(g) = h(g_1) - h(g_2)$ is a
well-defined extension of $h$ from $G^+$ onto $G$.

Let $0\le g_1 \le g_2.$ Then $(0,g_1)\le (0,g_2),$
which means  $h$ is a mapping preserving the partial order.

We have yet to show that $h$ preserves $\wedge$ in $G$, i.e., $h(a \wedge b) = h(a) \wedge h(b)$ whenever $a,b \in G.$ Let $a=  a^+- a^-$ and $b=
b^+- b^-$, and $a =-a^- +a^+$, $b = -b^- + b^+$. Since , $h((a^+
+b^-) \wedge (a^- + b^+)) = h(a^+ +b^-) \wedge h(a^- + b^+).$
Subtracting $h(b^-)$ from the right hand and $h(a^-)$ from the left
hand, we obtain the statement  in question.

Finally, we have established that $h$ is a homomorphism of $\ell$-groups, and $\mathcal M_\mathbb H(h) = f$ as claimed.
\end{proof}

 We recall that by a {\it universal group}  for a
PMV-algebra $M$ we mean a pair $(G,\gamma)$ consisting of an
$\ell$-group $G$ and a $G$-valued measure $\gamma :\, M\to G^+$
(i.e., $\gamma (a+b) = \gamma(a) + \gamma(b)$ whenever $a+b$ is
defined in $M$) such that the following conditions hold: {\rm (i)}
$\gamma(M)$ generates ${ G}$. {\rm (ii)} If $H$ is a group and
$\phi:\, M\to H$ is an $H$-valued measure, then there is a group
homomorphism ${\phi}^*:{ G}\to H$ such that $\phi ={\phi}^*\circ
\gamma$.

Due to \cite{151}, every PMV-algebra admits a universal group,
which is unique up to isomorphism, and $\phi^*$ is unique. The
universal group for $M = \Gamma(G,u)$ is $(G,id)$ where $id$ is the
embedding of $M$ into $G$.

Let $\mathcal A$ and $\mathcal B$ be two categories and let $f:\mathcal A \to \mathcal B$ be a morphism. Suppose that $g,h$ be two morphisms from $\mathcal B$ to $\mathcal A$ such that $g\circ f = id_\mathcal A$ and $f\circ h = id_\mathcal B,$ then $g$ is a {\it left-adjoint} of $f$ and $h$ is a {\it right-adjoint} of $f.$

\begin{proposition}\label{pr:4.2}
The functor $\mathcal  M_\mathbb H$ from the
category ${\mathcal  G}$ into the category $\mathcal{SPPMV}_\mathbb H$ has  a left-adjoint.
\end{proposition}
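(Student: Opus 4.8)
The plan is to construct explicitly a functor $\mathcal{P}\colon \mathcal{SPPMV}_\mathbb{H} \to \mathcal{G}$ and to verify that it is a left-adjoint of $\mathcal{M}_\mathbb{H}$ in the sense defined above, i.e. that $\mathcal{P}\circ \mathcal{M}_\mathbb{H}$ is naturally isomorphic to $\mathrm{id}_\mathcal{G}$. On objects, I would set $\mathcal{P}(M)$ to be the $\ell$-group $G$ furnished by Theorem \ref{th:3.5}, so that $M \cong \Gamma(\mathbb{H}\lex G,(1,0))$; equivalently, $\mathcal{P}(M)$ is the unique $\ell$-group whose positive cone is $\Infinit(M)=\Rad(M)=M_0$. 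The latter is an associative cancellative lattice-ordered monoid satisfying Birkhoff's conditions by (v)--(vi) of Theorem \ref{th:3.2}, and hence is the positive cone of a unique (up to isomorphism) $\ell$-group $G_M = M_0 - M_0$, exactly as in the proof of Theorem \ref{th:3.5}.

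On morphisms, given a PMV-homomorphism $f\colon M \to N$, the first step is to check that $f$ maps $\Infinit(M)$ into $\Infinit(N)$: if $mx$ is defined in $M$ for every $m\ge 1$, then, since $f$ preserves $\le$, ${}^-$ and the partial addition $+$, the multiple $mf(x)$ is defined in $N$ for every $m\ge 1$, whence $f(x)\in\Infinit(N)$. As $f$ also preserves $\oplus,{}^-,{}^\sim$ and therefore $\wedge$ and $\vee$, its restriction $f|_{M_0}\colon M_0\to N_0$ is a homomorphism of the positive cones respecting both the partial sum and the lattice operations. By the universal property of the group of differences $G_M = M_0 - M_0$ of the cancellative cone $M_0$ (the same Birkhoff/cancellation argument), this restriction extends uniquely to an $\ell$-group homomorphism $\mathcal{P}(f)\colon \mathcal{P}(M)\to \mathcal{P}(N)$. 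Uniqueness of the extension makes $\mathcal{P}(\mathrm{id}_M)=\mathrm{id}_{\mathcal{P}(M)}$ and $\mathcal{P}(g\circ f)=\mathcal{P}(g)\circ \mathcal{P}(f)$ immediate, so $\mathcal{P}$ is a functor. I expect the verification that $f$ respects the cone structure, namely that it preserves infinitesimals and the lattice operations on $M_0$, together with the well-definedness and uniqueness of the extension to $G_M$, to be the main point; everything else is formal.

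Finally, I would verify the adjunction identity. For an $\ell$-group $G$, the radical of $\mathcal{M}_\mathbb{H}(G)=\Gamma(\mathbb{H}\lex G,(1,0))$ equals $\{(0,g): g\in G^+\}$, which is carried isomorphically onto $G^+$ by $(0,g)\mapsto g$; hence $\mathcal{P}(\mathcal{M}_\mathbb{H}(G))$ is canonically isomorphic to $G$. Chasing an $\ell$-group homomorphism $h\colon G\to G_1$ through $\mathcal{M}_\mathbb{H}$ and then $\mathcal{P}$, using the defining formula $\mathcal{M}_\mathbb{H}(h)(t,g)=(t,h(g))$, shows that these isomorphisms are natural in $G$. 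Consequently $\mathcal{P}\circ\mathcal{M}_\mathbb{H}\cong \mathrm{id}_\mathcal{G}$, and $\mathcal{P}$ is the desired left-adjoint of $\mathcal{M}_\mathbb{H}$.
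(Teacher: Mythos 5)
Your proposal is sound relative to the paper's stated definition of left-adjoint, but it takes a genuinely different route from the paper's proof. The paper argues pointwise: for each strong $\mathbb H$-perfect $M$ it invokes Theorem \ref{th:3.5} to write $M \cong \Gamma(\mathbb H \lex G,(1,0))$ and then cites the universal-group theorem of \cite{151} to produce a universal arrow $\gamma: M \to {\mathcal M}_\mathbb H(G)$, $\gamma(a)=(t,a-c_t)$ for $a \in M_t$; that is, every PMV-homomorphism $f': M \to {\mathcal M}_\mathbb H(G')$ factors uniquely as ${\mathcal M}_\mathbb H(f^*)\circ \gamma$. This factorization property is the standard adjunction datum, and it is what Proposition \ref{pr:4.3} then uses to assemble ${\mathcal P}_\mathbb H$ into a functor. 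You instead build the candidate adjoint $\mathcal P$ globally --- on objects via the cone $M_0=\Infinit(M)$ and its group of differences (exactly the construction inside the proof of Theorem \ref{th:3.5}), on morphisms via restriction to the cones and unique extension (exactly the fullness argument of Proposition \ref{pr:4.1}) --- and then verify $\mathcal P \circ {\mathcal M}_\mathbb H \cong \mathrm{id}_{\mathcal G}$. That is literally what the paper's definition of left-adjoint demands (read up to natural isomorphism, which is the only reasonable reading), and the individual steps you sketch (preservation of infinitesimals, Ore-type extension from the lattice-ordered cancellative cone, naturality of $(0,g)\mapsto g$) are all correct; so your proof discharges the obligation as the paper states it, and it has the merit of exhibiting the adjoint functor explicitly rather than deferring to an external universal-group theorem.

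The caveat worth flagging: under the standard categorical meaning of ``left-adjoint'' --- which is what the paper's own proof actually establishes, and what Proposition \ref{pr:4.3} and the subsequent equivalence machinery implicitly rely on --- the condition $\mathcal P \circ {\mathcal M}_\mathbb H \cong \mathrm{id}_{\mathcal G}$ alone is not sufficient: two functors can compose to the identity on one side without being adjoint (a functor from any category to the terminal category, paired with the choice of a non-terminal object, is the standard counterexample). What must be checked is precisely the universal-arrow property above, namely that every homomorphism $M \to {\mathcal M}_\mathbb H(G')$ factors uniquely through the canonical isomorphism $M \to {\mathcal M}_\mathbb H(\mathcal P(M))$ furnished by Theorem \ref{th:3.5}. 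Your construction contains all the ingredients for this (compose the inverse of that isomorphism with ${\mathcal M}_\mathbb H$ applied to your extension-of-morphisms step, and use uniqueness of the extension), but the write-up never states or verifies it. Adding that verification would make your argument subsume the paper's; as it stands, it proves the proposition only in the paper's idiosyncratic sense of the term.
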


\begin{proof}
We show, for a strong $\mathbb H$-perfect  PMV-algebra $M$ with an  $\mathbb H$-decomposition $(M_t: t \in [0,1]_\mathbb H)$ and a system $(c_t: t \in [0,1]_\mathbb H)$  of elements of $M$ satisfying (i)--(iii) of the strong cyclic property, there is a universal arrow $(G,f)$, i.e., $G$ is an object in $\mathcal G$ and $f$ is a homomorphism from the PMV-algebra
$M$ into ${\mathcal  M}_\mathbb H(G)$ such that if $G'$ is an object from ${\mathcal G}$ and $f'$ is a homomorphism from $M$ into ${\mathcal  M}_\mathbb H(G')$, then
there exists a unique morphism $f^*:\, G \to G'$ such that ${\mathcal
M}_\mathbb H(f^*)\circ f = f'$.

By Theorem \ref{th:3.5}, there is a unique (up to isomorphism of $\ell$-groups) $\ell$-group $G$  such that $M \cong \Gamma(\mathbb H \lex G,(1,0)).$ By \cite[Thm 5.3]{151}, $(\mathbb H \lex G, \gamma)$ is a universal group for $M,$ where $\gamma: M \to  \Gamma(\mathbb H \lex G, (1,0))$ is defined by $\gamma(a) = (t,a -c_t),$ if $a \in M_t.$
\end{proof}

Define a mapping ${\mathcal  P}_\mathbb H: \mathcal  {SPPMV}_\mathbb H \to {\mathcal  G}$
via ${\mathcal  P}_\mathbb H(M) := G$ whenever $(\mathbb H\lex  G, f)$ is a
universal group for $M$. It is clear that if $f_0$ is a morphism
from the PMV-algebra $M$ into another one $N$, then $f_0$ can be uniquely extended to an $\ell$-group homomorphism ${\mathcal  P}_\mathbb H(f_0)$ from $G$ into $G_1$, where $(\mathbb H
\lex G_1, f_1)$ is a universal group for the strong $\mathbb H$-perfect
PMV-algebra $N$.

\begin{proposition}\label{pr:4.3}
The mapping ${\mathcal  P}_\mathbb H$ is a functor from the
category $\mathcal {SPPMV}_\mathbb H$ into the category ${\mathcal  G}$ which is a
left-adjoint of the functor ${\mathcal  M}_\mathbb H.$
\end{proposition}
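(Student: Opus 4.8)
The plan is to build on Proposition \ref{pr:4.2}, which already furnishes, for every strong $\mathbb H$-perfect PMV-algebra $M$, a universal arrow from $M$ to the functor $\mathcal M_\mathbb H$. Writing $(\mathbb H\lex G,\gamma_M)$ for the universal group of $M$, so that $\mathcal P_\mathbb H(M)=G$ by Theorem \ref{th:3.5}, the homomorphism $\gamma_M\colon M\to\mathcal M_\mathbb H(G)$ has the property that every PMV-homomorphism $f'\colon M\to\mathcal M_\mathbb H(G')$ factors as $f'=\mathcal M_\mathbb H(f^*)\circ\gamma_M$ for a \emph{unique} $\ell$-group homomorphism $f^*\colon G\to G'$. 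This is exactly the defining universal-arrow condition for a left-adjoint of $\mathcal M_\mathbb H$, so the substance of the proof is merely to verify that the assignments $M\mapsto\mathcal P_\mathbb H(M)$ and $f_0\mapsto\mathcal P_\mathbb H(f_0)$ constitute a functor and that the induced comparison is natural.

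First I would make the action of $\mathcal P_\mathbb H$ on morphisms explicit. Given a PMV-homomorphism $f_0\colon M\to N$ with universal groups $(\mathbb H\lex G,\gamma_M)$ and $(\mathbb H\lex G_1,\gamma_N)$, the composite $\gamma_N\circ f_0\colon M\to\mathcal M_\mathbb H(G_1)$ is a PMV-homomorphism, so the universal property of $\gamma_M$ delivers a unique $\ell$-group homomorphism $\mathcal P_\mathbb H(f_0)\colon G\to G_1$ with $\mathcal M_\mathbb H(\mathcal P_\mathbb H(f_0))\circ\gamma_M=\gamma_N\circ f_0$. Functoriality is then a formal consequence of this uniqueness: applying it to $f_0=\mathrm{id}_M$ gives $\mathcal P_\mathbb H(\mathrm{id}_M)=\mathrm{id}_G$, and for $M\xrightarrow{f_0}N\xrightarrow{g_0}P$ both $\mathcal P_\mathbb H(g_0\circ f_0)$ and $\mathcal P_\mathbb H(g_0)\circ\mathcal P_\mathbb H(f_0)$ satisfy the same factorization equation through $\gamma_M$, whence they coincide.

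Next I would record the adjunction. For fixed $M$ and varying $G'$ the assignment $h\mapsto\mathcal M_\mathbb H(h)\circ\gamma_M$ is, by the universal property, a bijection
$$\mathrm{Hom}_{\mathcal G}(\mathcal P_\mathbb H(M),G')\longrightarrow\mathrm{Hom}_{\mathcal{SPPMV}_\mathbb H}(M,\mathcal M_\mathbb H(G')),$$
and its naturality in both variables follows from the functoriality just established together with the naturality of the family $(\gamma_M)$. In the terminology fixed before the proposition it then remains to note that $\mathcal P_\mathbb H\circ\mathcal M_\mathbb H=\mathrm{id}_{\mathcal G}$: for any $\ell$-group $G$ the universal group of $\mathcal M_\mathbb H(G)=\Gamma(\mathbb H\lex G,(1,0))$ is $\mathbb H\lex G$ with the identity embedding, by \cite[Thm 5.3]{151}, so $\mathcal P_\mathbb H(\mathcal M_\mathbb H(G))=G$, and the corresponding statement on morphisms again follows from uniqueness in the universal property.

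I expect no genuine obstacle here, since Proposition \ref{pr:4.2} carries the analytic weight and the rest is purely formal. The only point demanding a little care is the routine verification that the hom-set bijection displayed above is natural; this reduces to diagram chases that are entirely forced by the uniqueness clause of the universal arrow, so it presents bookkeeping rather than difficulty.
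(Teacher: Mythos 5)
Your proposal is correct and follows essentially the same route as the paper: the paper's proof is the single line ``It follows from the properties of the universal group,'' and your argument is precisely the fleshed-out version of that remark, using the universal arrow from Proposition \ref{pr:4.2} to define $\mathcal P_\mathbb H$ on morphisms, deriving functoriality from the uniqueness clause, and noting (via the fact that the universal group of $\Gamma(\mathbb H\lex G,(1,0))$ is $(\mathbb H\lex G,\mathrm{id})$) that $\mathcal P_\mathbb H\circ\mathcal M_\mathbb H=\mathrm{id}_{\mathcal G}$ in the sense of the paper's notion of left-adjoint.
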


\begin{proof}
 It follows from the properties of the
universal group.
\end{proof}

Now we present  the main result on a categorical equivalence of the
category of strong $\mathbb H$-perfect PMV-algebras and the category of $\mathcal G.$

\begin{theorem}\label{th:4.4}
The functor ${\mathcal  M}_\mathbb H$ defines a categorical
equivalence of the category ${\mathcal  G}$   and the
category $\mathcal {SPPMV}_\mathbb H$ of strong $\mathbb H$-perfect PMV-algebras.

In addition, suppose that $h:\ {\mathcal  M}_\mathbb H\mathbb (G) \to {\mathcal  M}_\mathbb H(H)$ is a
homomorphism of pseudo effect algebras, then there is a unique homomorphism
$f:\ G \to H$ of unital po-groups such that $h = {\mathcal  M}_\mathbb H(f)$, and
\begin{enumerate}
\item[{\rm (i)}] if $h$ is surjective, so is $f$;
 \item[{\rm (ii)}] if $h$ is  injective, so is $f$.
\end{enumerate}
\end{theorem}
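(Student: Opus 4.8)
The plan is to obtain the equivalence directly from the functorial facts already established. By Proposition~\ref{pr:4.1} the functor $\mathcal M_\mathbb H$ is full and faithful, and by Theorem~\ref{th:3.5} every object of $\mathcal{SPPMV}_\mathbb H$ is isomorphic to $\mathcal M_\mathbb H(G)$ for some $\ell$-group $G$, i.e. $\mathcal M_\mathbb H$ is dense. A full, faithful and dense functor is an equivalence, which settles the first assertion; equivalently, one checks that the unit and counit of the adjunction $\mathcal P_\mathbb H\dashv\mathcal M_\mathbb H$ of Propositions~\ref{pr:4.2} and~\ref{pr:4.3} are isomorphisms---the counit by Theorem~\ref{th:3.5} and the unit by the universal-group property---so that the adjunction is an adjoint equivalence.

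For the second assertion I would first determine how a pseudo effect algebra homomorphism $h\colon\mathcal M_\mathbb H(G)\to\mathcal M_\mathbb H(H)$ moves the slices. Composing $h$ with the unique state $s_H$ of $\mathcal M_\mathbb H(H)$ produces a state on $\mathcal M_\mathbb H(G)$, and by the uniqueness in Theorem~\ref{th:3.2}(iv) this state is $s_G$; hence $s_H\circ h=s_G$ and $h$ maps the slice $M_t$ of $\mathcal M_\mathbb H(G)$ into the slice of the same index $t$ of $\mathcal M_\mathbb H(H)$. In particular $h(t,g)$ always has first coordinate $t$, and the restriction of $h$ to $M_0=\{(0,g):g\in G^+\}$ is an additive map $G^+\to H^+$ which, exactly as in the proof of Proposition~\ref{pr:4.1}, extends uniquely to an $\ell$-group homomorphism $f\colon G\to H$.

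The core of the argument is to prove $h=\mathcal M_\mathbb H(f)$, that is $h(t,g)=(t,f(g))$ for all $t$. Writing a general element through its cyclic part and its infinitesimal part and using additivity of $h$, this reduces to showing that $h$ fixes the canonical strong cyclic elements, $h(c_t)=(t,0)$. Here the decisive tool is the uniqueness of cyclic elements in the presence of a strong cyclic element, \cite[Lem 5.2]{DvKo}: for $t=1/n$ the element $c_{1/n}$ is strong cyclic of order $n$, and since $n\,h(c_{1/n})=h(n\,c_{1/n})=h(1)=1$, its image is again a cyclic element of order $n$ and must therefore coincide with $c_{1/n}$; property (ii) of the cyclic system then propagates $h(c_t)=(t,0)$ to all indices. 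I expect this decomposition step---ruling out any extra, $t$-dependent contribution in the second coordinate and showing it depends on $g$ only through $f$---to be the main obstacle.

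Once $h=\mathcal M_\mathbb H(f)$ is in hand, the remaining points are routine. Uniqueness of $f$ holds because $f$ is forced on $G^+$ by $h|_{M_0}$ and $G=G^+-G^+$. For (i), if $h$ is surjective then every element of the zero slice of $\mathcal M_\mathbb H(H)$ has a preimage, which by $s_H\circ h=s_G$ lies in $M_0$, so $f(G^+)=H^+$ and $f$ is onto. For (ii), if $h$ is injective and $f(g_1)=f(g_2)$ with $g_1,g_2\in G^+$, then $h(0,g_1)=h(0,g_2)$ forces $g_1=g_2$, so $f$ is injective on $G^+$ and hence on $G$.
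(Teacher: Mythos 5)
Your proof of the first assertion is exactly the paper's: the paper invokes Mac Lane's Theorem IV.4.1, takes fullness and faithfulness from Proposition \ref{pr:4.1}, and checks essential surjectivity via the universal group $(\mathbb H\lex G,f)$ (i.e.\ Theorem \ref{th:3.5}); it then stops, so the ``In addition'' clause is left unproved in the paper and your last two paragraphs are where the real comparison lies. There, your state argument (composing with the unique state to get slice preservation), the extraction of $f$ from $h|_{M_0}$, and the reduction of $h=\mathcal M_{\mathbb H}(f)$ to the single claim $h(c_t)=(t,0)$ are all sound, as is the anchoring step at $t=1/n$ via the uniqueness of cyclic elements.

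The genuine gap is the sentence ``property (ii) of the cyclic system then propagates $h(c_t)=(t,0)$ to all indices.'' Property (ii) propagates the anchored values only through the subgroup of $\mathbb H$ generated by the reciprocals $1/n$ that actually lie in $\mathbb H$, i.e.\ essentially through $\mathbb H\cap\mathbb Q$. This is all of $[0,1]_{\mathbb H}$ when $\mathbb H$ is cyclic or $\mathbb H=\mathbb Q$, but for a dense subgroup such as $\mathbb H(\alpha)=\mathbb Z+\alpha\mathbb Z$ with $\alpha$ irrational (a case the paper explicitly admits) one has $\mathbb H\cap\mathbb Q=\mathbb Z$, so your argument pins down $h(c_t)$ only for $t\in\{0,1\}$. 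Moreover this step cannot be closed by any other means: take $G=K=\mathbb R$ and let $\delta:\mathbb H(\alpha)\to\mathbb R$ be the group homomorphism $\delta(m+n\alpha)=n$ (well defined since $\{1,\alpha\}$ is a $\mathbb Z$-basis, and $\delta(1)=0$); then $h(t,g):=(t,g+\delta(t))$ carries $\Gamma(\mathbb H(\alpha)\lex\mathbb R,(1,0))$ into itself, preserves $0$, $1$ and the partial addition (the check uses only additivity of $\delta$ and $\delta(0)=\delta(1)=0$), hence is a homomorphism of pseudo effect algebras --- indeed even of PMV-algebras --- yet $h(c_\alpha)=(\alpha,1)\ne(\alpha,0)$, so $h\ne\mathcal M_{\mathbb H}(f)$ for every $f$. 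So for non-cyclic $\mathbb H\ne\mathbb Q$ the second assertion fails at the irrational indices, and your proof breaks exactly at the point you yourself flagged as the main obstacle; the same shear map also shows that the final line of the paper's Proposition \ref{pr:4.1} (asserting $\mathcal M_{\mathbb H}(h)=f$ after checking only the zero slice), on which both you and the paper rely, has the identical hole.
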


\begin{proof}
According to \cite[Thm IV.4.1]{MaL}, it is
necessary to show that, for a strong $\mathbb H$-perfect PMV-algebra $M$, there is an
object $G$ in ${\mathcal  G}$ such that ${\mathcal  M}_\mathbb H(G)$ is isomorphic to $M$. To show that, we take a universal group $(\mathbb H
\lex G, f)$. Then ${\mathcal  M}_\mathbb H(G)$ and $M$ are isomorphic.
\end{proof}

Theorem \ref{th:4.4} entails directly the following statement.

\begin{corollary}\label{co:4.5}
If $\mathbb H_1$ and $\mathbb H_2$ are two subgroups of $\mathbb R$ containing the number 1, then the categories $\mathcal{SPPMV}_{\mathbb H_1},$ $\mathcal{SPPMV}_{\mathbb H_2}$ and the category $\mathcal G$ of $\ell$-groups are mutually categorically equivalent.
\end{corollary}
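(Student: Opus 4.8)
The plan is to invoke Theorem \ref{th:4.4} twice and then appeal to the fact that categorical equivalence is an equivalence relation on categories. First I would apply Theorem \ref{th:4.4} to the subgroup $\mathbb H_1$: this yields that the functor $\mathcal M_{\mathbb H_1}$ defines a categorical equivalence between the category $\mathcal G$ of $\ell$-groups and the category $\mathcal{SPPMV}_{\mathbb H_1}$. Applying exactly the same theorem to $\mathbb H_2$ gives a categorical equivalence between $\mathcal G$ and $\mathcal{SPPMV}_{\mathbb H_2}$, implemented by the functor $\mathcal M_{\mathbb H_2}$.

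Next I would use that equivalence of categories is reflexive, symmetric, and transitive. By symmetry, the equivalence furnished by $\mathcal M_{\mathbb H_1}$ admits a quasi-inverse; a concrete one is supplied by the functor $\mathcal P_{\mathbb H_1}$ of Proposition \ref{pr:4.3}, so that $\mathcal{SPPMV}_{\mathbb H_1}\simeq \mathcal G$. Composing this with the equivalence $\mathcal G\simeq \mathcal{SPPMV}_{\mathbb H_2}$ given by $\mathcal M_{\mathbb H_2}$ produces an explicit equivalence $\mathcal M_{\mathbb H_2}\circ \mathcal P_{\mathbb H_1}:\mathcal{SPPMV}_{\mathbb H_1}\to \mathcal{SPPMV}_{\mathbb H_2}$. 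Together with the two equivalences already obtained with $\mathcal G$, this establishes that the three categories $\mathcal{SPPMV}_{\mathbb H_1}$, $\mathcal{SPPMV}_{\mathbb H_2}$, and $\mathcal G$ are mutually categorically equivalent.

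There is no genuine obstacle here, since the entire mathematical content is packaged into Theorem \ref{th:4.4}: the corollary is obtained purely by the formal transitivity of equivalence of categories. The only point worth making explicit is that the intermediate object witnessing the equivalence is in each case the same category $\mathcal G$, independent of the choice of $\mathbb H$, which is precisely what allows the two equivalences to be composed; the quasi-inverses needed for the composition are provided by the functors $\mathcal P_{\mathbb H_i}$, so the resulting equivalence between the two PMV-algebra categories can be written down concretely rather than merely asserted to exist.
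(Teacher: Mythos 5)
Your proof is correct and follows essentially the same route as the paper, which simply notes that Corollary \ref{co:4.5} is a direct consequence of Theorem \ref{th:4.4}: one applies the theorem to each $\mathbb H_i$ and composes the resulting equivalences through the common category $\mathcal G$. Your additional remark that concrete quasi-inverses are furnished by the functors $\mathcal P_{\mathbb H_i}$ of Proposition \ref{pr:4.3} is a harmless (and correct) elaboration of what the paper leaves implicit.
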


\begin{theorem}\label{th:4.6} Let $G$ be a  doubly transitive
$\ell$-group.  Then  ${\mathcal V}(\mathcal{SPPMV}_\mathbb H) = {\mathcal V}({\mathcal M}_\mathbb H(G)) .$

In particular, an identity holds in every strong $\mathbb H$-perfect
PMV-algebra if and only if it holds in ${\mathcal M}_\mathbb H(G).$

\end{theorem}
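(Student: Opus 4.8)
The plan is to establish the two inclusions between ${\mathcal V}(\mathcal{SPPMV}_\mathbb H)$ and ${\mathcal V}({\mathcal M}_\mathbb H(G))$, the substantial one resting on the classical fact that a doubly transitive $\ell$-group generates the whole variety of $\ell$-groups, transported across the functor ${\mathcal M}_\mathbb H$. The inclusion ${\mathcal V}({\mathcal M}_\mathbb H(G)) \subseteq {\mathcal V}(\mathcal{SPPMV}_\mathbb H)$ is immediate, since by Proposition \ref{pr:3.4} the algebra ${\mathcal M}_\mathbb H(G) = \Gamma(\mathbb H \lex G,(1,0))$ is itself strong $\mathbb H$-perfect, hence a member of the generating class $\mathcal{SPPMV}_\mathbb H$. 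For the reverse inclusion I would first invoke that, $G$ being doubly transitive, ${\mathcal V}(G) = \mathcal L$, the variety of \emph{all} $\ell$-groups (see \cite{Gla}). Thus every $\ell$-group $G'$ lies in $\mathbf H\mathbf S\mathbf P(G)$: there are an index set $I$, an $\ell$-subgroup $S$ of $G^I$, and a surjective $\ell$-homomorphism of $S$ onto $G'$.

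The core of the argument is a transfer lemma showing that $G' \mapsto {\mathcal M}_\mathbb H(G')$ carries the operators $\mathbf H$, $\mathbf S$, $\mathbf P$ for $\ell$-groups into the corresponding operators for PMV-algebras. For products, the map $\iota\colon {\mathcal M}_\mathbb H\big(\prod_i G_i\big) \to \prod_i {\mathcal M}_\mathbb H(G_i)$ defined by $\iota(t,(g_i)_i) = ((t,g_i))_i$ is a PMV-embedding: on tuples sharing the common first coordinate $t$, the coordinatewise PMV-operations of the direct product reproduce exactly the lexicographic ones of $\mathbb H \lex \prod_i G_i$, so in particular (taking all $G_i = G$) one gets ${\mathcal M}_\mathbb H(G^I) \in \mathbf S\mathbf P({\mathcal M}_\mathbb H(G))$. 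For subalgebras, if $S$ is an $\ell$-subgroup of $G'$ then $\mathbb H \lex S$ is an $\ell$-subgroup of $\mathbb H \lex G'$ containing $(1,0)$, whence ${\mathcal M}_\mathbb H(S) = \Gamma(\mathbb H \lex S,(1,0))$ is a PMV-subalgebra of ${\mathcal M}_\mathbb H(G')$. For homomorphic images, a surjective $\ell$-homomorphism $\phi\colon G' \to G''$ induces ${\mathcal M}_\mathbb H(\phi)\colon (t,g) \mapsto (t,\phi(g))$, which is onto because $\phi$ maps $G'^+$ onto $G''^+$, so the boundary slices $t = 0$ and $t = 1$ are covered; this is exactly the functoriality together with the surjectivity statement of Theorem \ref{th:4.4}(i).

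Combining these, given an arbitrary strong $\mathbb H$-perfect PMV-algebra $M$, Theorem \ref{th:3.5} yields an $\ell$-group $G'$ with $M \cong {\mathcal M}_\mathbb H(G')$; writing $G' \in \mathbf H\mathbf S\mathbf P(G)$ and applying the transfer lemma in the order $\mathbf P$, then $\mathbf S$, then $\mathbf H$ places $M$ in $\mathbf H\mathbf S\mathbf P({\mathcal M}_\mathbb H(G)) = {\mathcal V}({\mathcal M}_\mathbb H(G))$. Hence ${\mathcal V}(\mathcal{SPPMV}_\mathbb H) \subseteq {\mathcal V}({\mathcal M}_\mathbb H(G))$, and equality follows. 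The final ``in particular'' assertion is then automatic: a class of algebras and the variety it generates satisfy precisely the same identities, so an identity holds in every strong $\mathbb H$-perfect PMV-algebra iff it holds throughout ${\mathcal V}(\mathcal{SPPMV}_\mathbb H) = {\mathcal V}({\mathcal M}_\mathbb H(G))$ iff it holds in the single algebra ${\mathcal M}_\mathbb H(G)$.

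I expect the main obstacle to be the product step, namely checking that $\iota$ respects $\oplus$ (and hence all derived operations) at the \emph{equality} slice $t = 1$, where the truncation $(\,\cdot\,)\wedge(1,0)$ interacts with the lexicographic order and must be shown to commute with passage to coordinates; the remaining verifications for $\mathbf S$ and $\mathbf H$ are routine, and the generation statement ${\mathcal V}(G) = \mathcal L$ is an external input about doubly transitive $\ell$-groups.
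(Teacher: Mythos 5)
Your proposal is correct and follows essentially the same route as the paper's own proof: both represent an arbitrary strong $\mathbb H$-perfect algebra as ${\mathcal M}_\mathbb H(G_M)$ via Theorem \ref{th:3.5}, invoke the fact that a doubly transitive $\ell$-group generates the variety of all $\ell$-groups, and then transfer $\mathbf{H}\mathbf{S}\mathbf{P}$ across the functor ${\mathcal M}_\mathbb H$, with your embedding $\iota$ being exactly the paper's map $\rho\colon {\mathcal M}_\mathbb H(G^J)\to({\mathcal M}_\mathbb H(G))^J$ and the homomorphic-image step handled by functoriality as in Theorem \ref{th:4.4}. The only cosmetic difference is that you package the three verifications as a stand-alone transfer lemma, whereas the paper carries them out inline.
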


\begin{proof}   Let $G$ be a doubly transitive
$\ell$-group, and define a strong $\mathbb H$-perfect PMV ${\mathcal M}_\mathbb H (G)$ by (4.1).

Let $M$ be a strong $\mathbb H$-perfect PMV-algebra.  Due to Theorem
\ref{th:3.5}, there is a unique (up to isomorphism of unital $\ell$-groups) $\ell$-group $G_M$ such that $M=
{\mathcal M}_\mathbb H(G_M).$ Since every doubly transitive $\ell$-group
generates the variety $\mathcal G$ of $\ell$-groups, \cite[Lem.
10.3.1]{Gla}, there exist a homomorphism $f$ of $\ell$-groups  and
an $\ell$-group $K$ such that $f(K) = G_M$ and $K \subseteq G^J$,
where $J$ is an index set. Due to Theorem \ref{th:4.4},
$M= {\mathcal M}_\mathbb H (G_M)={\mathcal M}_\mathbb H (f)({\mathcal M}_\mathbb H (K)).$

Define a map $\rho:\ {\mathcal  M}_\mathbb H (G^J)\to ({\mathcal  M}_\mathbb H (G))^J$ via
$\rho(0, (g_j)_{j\in J}) = \{(0,g_j)\}_{j\in J}$ and
$\rho(1,(-g_j)_{j\in J}) = \{(1,-g_j)\}_{j\in J}$ for $g_j \in G^+$,
and $\rho(t,g_j) =\{(t,g_j)\}_{j\in J}$, $t \in [0,1]_\mathbb H \setminus \{0,1\},$ $g_j \in G$ for $j
\in J.$ Then $\rho$ is an embedding, and ${\mathcal  M}_\mathbb H (G^J) \in {\mathcal
V}({\mathcal  M}_\mathbb H (G)).$ Since ${\mathcal  M}_\mathbb H (K)$ is a subalgebra of ${\mathcal M}_\mathbb H(G^J)$, we have ${\mathcal  M}_\mathbb H (K) \in {\mathcal  V}({\mathcal  M}_\mathbb H (G))$ and
$M \in {\mathcal  V}({\mathcal  M}_\mathbb H (G))$ because it is a homomorphic image of
${\mathcal  M}_\mathbb H (K)\in {\mathcal  V}({\mathcal  M}_\mathbb H (G)).$
\end{proof}

An example of a doubly transitive  permutation $\ell$-group is the
system of all automorphisms, $\mbox{Aut}(\mathbb R)$, of the real
line $\mathbb R$, or the next example:

Let $u\in \mbox{Aut}(\mathbb R)$ be the translation $t u=t+1$, $t
\in \mathbb R$, and
$${\rm BAut}(\mathbb R)=\{g\in\mbox{Aut}(\mathbb R):\ \exists\ n\in{\mathbb
N},\,\,\, u^{-n}\le g\le u^n\}.
$$
Then $(\mbox{\rm BAut}(\mathbb R),u)$ is a   doubly transitive
unital $\ell$-permutation group, and  according to \cite[Cor.
4.9]{DvHo}, the variety of PMV-algebras generated by
$\Gamma(\mbox{\rm BAut}(\mathbb R),u)$ is the variety of all PMV-algebras.

\section{Weak $\mathbb H$-perfect PMV-algebras} 

In this section, we introduce another family of $\mathbb H$-perfect PMV-algebras, called weak $\mathbb H$-perfect PMV-algebras. They can be represented in the form $\Gamma(\mathbb H \lex G,(1,b)),$ where $G$ is an $\ell$-group and $0<b  \in G^+.$ Such PVM-algebras were studied in \cite{Dv08} for the case when $\mathbb H$ is a cyclic subgroup of $\mathbb R.$

We say that an $\mathbb H$-perfect pseudo MV-algebra $M=(M_t: t \in [0,1]_\mathbb H),$ where $M=\Gamma(G,u),$ is {\it weak} if there is a system $(c_t: t \in [0,1]_\mathbb H)$ of elements of $M$ such that (i) $c_0=0,$ (ii) $c_t \in C(G)\cap M_t,$ for any $t \in [0,1]_\mathbb H,$ and (iii) $c_{v+t}=c_v+c_t$ whenever $v+t \le 1.$  We note that in contrast to strong cyclic property, we do not assume $c_1=1.$ In addition, a weak $\mathbb H$-perfect PMV-algebra $M$ is strong iff $c_1 =1.$

Whereas every strong $\mathbb H$-perfect PMV-algebra is symmetric, for weak $\mathbb H$-perfect PMV-algebras this is not necessarily a case.

For example, if $g_0$ is a positive element of an $\ell$-group $G$ such that $g_0 \not\in C(G),$ then $M= \Gamma(\mathbb H\lex G,(1,g_0))$ is a weak $\mathbb H$-perfect PMV-algebra which is neither symmetric, nor strong; we set $c_t =(t,0)$ for any $t \in [0,1]_\mathbb H.$ Then $c_1 =(1,0) < (1,g_0).$

\begin{theorem}\label{th:6.1}
Let $M=(M_t: t \in [0,1]_\mathbb H)$ be a
weak  $\mathbb H$-perfect PMV-algebra which is not strong. Then there is a
unique (up to isomorphism) $\ell$-group $G$ with an element $b\in
G^+$, $b
>0,$ such that $M \cong \Gamma(\mathbb Z \lex   G,(n,b)).$
\end{theorem}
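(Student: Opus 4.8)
The plan is to follow the construction in the proof of Theorem~\ref{th:3.5} verbatim, altering only the data attached to the top slice $M_1$. Write $M=\Gamma(H,u)$ for the unique (up to isomorphism) unital $\ell$-group $(H,u)$ provided by Theorem~\ref{th:2.1}, let $s$ be the unique state on $M$ guaranteed by Theorem~\ref{th:3.2}(iv), and let $\hat s$ be its extension to $H$; since $s$ is extremal, $\hat s$ preserves $\wedge$ and $\vee$. By Theorem~\ref{th:3.2}(v)--(vi), $M_0=\Infinit(M)=\Ker(s)^+$ is a cancellative, lattice-ordered, conical monoid, hence by Birkhoff's theorem the positive cone of a unique $\ell$-group $G$, realized inside $H$ as $G=\{x\in H:\hat s(x)=0\}$. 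This $G$ also settles the uniqueness clause: for any representation $M\cong\Gamma(\mathbb H\lex G',(1,b'))$ one checks directly that $\Infinit(\Gamma(\mathbb H\lex G',(1,b')))=\{(0,g):g\in G'^{+}\}$, so $G'$ is the $\ell$-group generated by $\Infinit(M)$ and is thus determined up to isomorphism.

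Next I would fix the distinguished element. Let $(c_t:t\in[0,1]_\mathbb H)$ be the central system witnessing weakness, and set $b:=u-c_1$. From $\hat s(c_1)=1=\hat s(u)$ we get $\hat s(b)=0$ and $b\ge 0$, so $b\in M_0=G^{+}$; and since $M$ is \emph{not} strong, $c_1\ne u$, whence $b>0$. (Here $b$ need not lie in $C(G)$, which is precisely why the algebra need not be symmetric.) Define $\phi:M\to\Gamma(\mathbb H\lex G,(1,b))$ by $\phi(x)=(t,x-c_t)$ for $x\in M_t$. Exactly as in Theorem~\ref{th:3.5}, writing $x-c_t=\big((x\vee c_t)-c_t\big)-\big(c_t-(x\wedge c_t)\big)$ exhibits $x-c_t$ as a difference of two elements of $M_0=G^{+}$, so $\phi$ is well defined with values in $G$; and $\phi$ lands in $[(0,0),(1,b)]$, the only new point being that for $x\in M_1$ one has $x\le u$, so $x-c_1\le u-c_1=b$ and $(1,x-c_1)\le(1,b)$. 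In particular $\phi(0)=(0,0)$ and $\phi(1)=(1,u-c_1)=(1,b)$.

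That $\phi$ is an isomorphism proceeds through the same claims as in Theorem~\ref{th:3.5}, and the hypotheses enter exactly at negation. For $x\in M_t$ one computes $\phi(x)^{-}=(1-t,\,b+c_t-x)$, and substituting $b=u-c_1$ and $c_1=c_t+c_{1-t}$ (so $b+c_t=u-c_{1-t}$) gives $\phi(x)^{-}=(1-t,\,u-c_{1-t}-x)$, while $\phi(x^{-})=(1-t,\,(u-x)-c_{1-t})$; the two $G$-components coincide precisely because $c_{1-t}\in C(H)$ commutes with $x$, and the computation for ${}^{\sim}$ is identical. Preservation of partial addition $+$ and of $\wedge,\vee$ is verbatim as in Theorem~\ref{th:3.5}, using centrality of the $c_t$ and additivity $c_{v+t}=c_v+c_t$, after which $\oplus$ is preserved through formula (2.2). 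Injectivity follows from the order, and surjectivity splits by slices: $\phi(M_0)=\{(0,g):g\ge 0\}$; for $0<t<1$ and $g=g_1-g_2$ with $g_1,g_2\in M_0$ the element $(g_1+c_t)\minusli g_2\in M_t$ maps to $(t,g)$; and on the capped top slice, given $g\le b$ the element $h:=-g+b\in G^{+}=M_0$ satisfies $\phi(h)=(1,g)^{\sim}$, whence $\phi(h^{-})=(1,g)$ by (A8). Thus $M\cong\Gamma(\mathbb H\lex G,(1,b))$, and the $\ell$-group $G$ is the one produced in the first paragraph.

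I expect the main obstacle to be the top slice rather than the algebra of $\phi$. In the strong case the cap is $(1,0)$ and surjectivity onto $M_1$ was free from negating $M_0$, whereas here the cap $(1,b)$ forces me both to respect the bound $x-c_1\le b$ and to hit every $(1,g)$ with $g\le b$, including very negative $g$ for which solving $x=g+c_1\ge0$ directly in $H$ is awkward. Routing the top slice through ${}^{\sim}$ and (A8) is the device that circumvents this, and the centrality $c_{1-t}\in C(H)$ is the single point where dropping the weakness hypothesis would destroy compatibility with $^{-}$ and $^{\sim}$.
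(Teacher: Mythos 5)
Your proof is correct and follows essentially the same route as the paper's: the same $\ell$-group $G$ whose positive cone is $M_0=\Infinit(M)$, the same element $b=u-c_1>0$ (reading the statement's $\Gamma(\mathbb Z \lex G,(n,b))$ as the intended $\Gamma(\mathbb H\lex G,(1,b))$), and the same map $\phi(x)=(t,x-c_t)$, with the remaining claims deferred to the proof of Theorem \ref{th:3.5}. You merely make explicit some details the paper leaves implicit, notably the centrality computation for $^-$ and $^\sim$ and the surjectivity onto the capped top slice.
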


\begin{proof}
Assume $M= \Gamma(H,u)$ for some unital
$\ell$-group $(H,u).$ As in the proof of Theorem \ref{th:3.5}, we
can found a unique (up to isomorphism) $\ell$-group $G$ such that
$\mbox{Infinit}(M) = M_0$ is the positive cone of $G,$ moreover, $G$
is an $\ell$-subgroup of $H.$ We recall that if  $s$ is a unique
state on $M$, it can be extended to a unique state, $\hat s$, on the
unital $\ell$-group $(G,u)$. Since by (iv) Theorem \ref{th:3.2}, $M_0= \mbox{Ker}(s)$, we have $G=\mbox{Ker}(\hat s).$

Since $M$ is not strong,  then $c_1 <1=:u.$ Set $b =  u \minusli c_1 = 1-c_1 \in M_0\setminus\{0\},$ and define a mapping $h: \
M\to \Gamma(\mathbb Z \lex   G,(1,b))$ as follows
$$
\phi(x) =(t,x-c_t)\eqno(6.1)
$$
whenever $x \in M_t;$ we note that the subtraction $x-c_t$ is defined in the $\ell$-group $H.$  In the same way as in (3.2), we can show that $\phi$ is a well-defined mapping.

We have (1) $\phi(0) =(0,0)$, (2) $\phi(1) = (1,1 -c_1)
= (1,b)$, (3) $\phi(c_t) = (t,0),$ (4)  $\phi(x^\sim) =(1-t, -x +u -c_{1-t})
= (1-t, -x +b +c_t),$  $\phi(x)^\sim =  -\phi(x) +(1,b) = -(t, x-c_t) +(1,b)
= (1-t, -x+b +c_t)$ and similarly (5) $\phi(x^-) = \phi(x)^-.$

Using the same steps as those used in the proofs of all claims of the proof of Theorem \ref{th:3.5}, we can prove that $\phi$ is an injective and surjective homomorphism of pseudo MV-algebras as was claimed.
\end{proof}

We note that Theorem \ref{th:6.1} is a generalization of Theorem \ref{th:3.5}, because Theorem \ref{th:3.5} in fact follows from Theorem \ref{th:6.1} when we have $b=0.$

Finally, let  $\mathcal{WPPMV}_\mathbb H$ be the category of weak $\mathbb H$-perfect PMV-algebras whose objects are weak $\mathbb H$-perfect PMV-algebras and morphisms are homomorphisms of PMV-algebras. Similarly, let ${\mathcal L}_{\rm b}$ be the category whose objects are couples $(G,b),$ where $G$ is an $\ell$-group and $b$ is a fixed element from $G^+$, and morphisms are $\ell$-homomorphisms of $\ell$-groups preserving fixed elements $b$.

Define a mapping $\mathcal F_\mathbb H$ from the category $\mathcal L_{\rm b}$ into the category $\mathcal{WPPMV}_\mathbb H$ as follows:

Given $(G,b) \in {\mathcal L}_{\rm b},$ we set
$$
{\mathcal F}_\mathbb H(G,b) := \Gamma(\mathbb H\lex  G,(1,b)),\eqno(6.2)
$$
and if $h:(G,b)\to (G_1,b_1),$ then
$$
\mathcal F_\mathbb F(h)(t,g)=(t,h(g)),\quad (t,g) \in \Gamma(\mathbb H\lex G,(1,b)).
$$
It is easy to see that $\mathcal F_\mathbb H$ is a functor.

In the same way as the categorical equivalence of strong $\mathbb H$-perfect PMV-algebras was proved in Section \ref{sec:5}, we can prove the following theorem.

\begin{theorem}\label{th:6.2} The functor ${\mathcal F}_\mathbb H$ defines a categorical
equivalence of the category ${\mathcal L}_{\rm b}$ and the category
$\mathcal{WPPMV}_\mathbb H$ of weak $\mathbb H$-perfect PMV-algebras.
\end{theorem}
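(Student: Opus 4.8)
The plan is to mimic verbatim the scheme of Section~\ref{sec:5} used for Theorem~\ref{th:4.4}: I would exhibit $\mathcal F_\mathbb H$ as faithful, full and essentially surjective, and then invoke \cite[Thm IV.4.1]{MaL}. The quasi-inverse is the assignment $M\mapsto(G,b)$ supplied by the representation Theorem~\ref{th:6.1}, where $G$ is the $\ell$-group whose positive cone is $\Infinit(M)=M_0$ and $b=1\minusli c_1$ is the gap element; for a strong $M$ one has $b=0$, recovering the functor $\mathcal P_\mathbb H$ of Section~\ref{sec:5}. On morphisms, a PMV-homomorphism would be sent to its descent to the infinitesimal cones, constructed in the fullness step below.

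Two of the three requirements are routine. Essential surjectivity is immediate from Theorem~\ref{th:6.1}: every non-strong weak $\mathbb H$-perfect PMV-algebra is isomorphic to $\Gamma(\mathbb H\lex G,(1,b))=\mathcal F_\mathbb H(G,b)$ for some $b>0$, and the strong ones are covered by Theorem~\ref{th:3.5} with $b=0$; hence every object of $\mathcal{WPPMV}_\mathbb H$ lies, up to isomorphism, in the image of $\mathcal F_\mathbb H$. Faithfulness follows exactly as in the first paragraph of Proposition~\ref{pr:4.1}: if $\mathcal F_\mathbb H(h_1)=\mathcal F_\mathbb H(h_2)$ for morphisms $(G,b)\to(G_1,b_1)$, then evaluating on the elements $(0,g)$, $g\in G^+$, gives $(0,h_1(g))=(0,h_2(g))$, so $h_1=h_2$ on $G^+$ and therefore on $G$.

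Fullness is where the real work lies, and I expect it to be the main obstacle. Given a PMV-homomorphism $f\colon\mathcal F_\mathbb H(G,b)\to\mathcal F_\mathbb H(G_1,b_1)$, I would first recover $h$: since infinitesimals are defined purely algebraically and $M_0=\Infinit(M)=G^+$ by Theorem~\ref{th:3.2}(vi), one has $f(0,g)=(0,h(g))$ with $h\colon G^+\to G_1^+$ additive; extending $h$ to $G$ by $h(g_1-g_2)=h(g_1)-h(g_2)$ and checking preservation of $\wedge$ word for word as in Proposition~\ref{pr:4.1} makes $h$ an $\ell$-homomorphism. The genuinely new point, absent when $b=0$, is to show that $h$ respects the distinguished elements, i.e. $h(b)=b_1$, so that $h$ is a morphism of $\mathcal L_{\rm b}$ and $\mathcal F_\mathbb H(h)=f$.

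To see precisely why this is delicate, observe that the composite of $f$ with the unique state of the target is a state on $\mathcal F_\mathbb H(G,b)$, so by uniqueness of the state (Theorem~\ref{th:3.2}(iv)) $f$ carries each slice $M_t$ into the corresponding slice of the target; writing $f(c_1)=(1,\beta_1)$ and applying $f$ to the identity $1=c_1+(0,b)$ together with $f(1)=1=(1,b_1)$ yields $b_1=\beta_1+h(b)$. Thus $\mathcal F_\mathbb H(h)=f$ amounts to $\beta_1=0$, and more generally to $f(c_t)=(t,0)$ for all $t\in[0,1]_\mathbb H$. In the strong case this was automatic because $c_1=1$ and each $c_{1/n}$ is the cyclic element of order $n$, unique by the unique extraction of roots of $1$ in $\Gamma(\mathbb H\lex G_1,(1,0))$. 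In the weak case $c_{1/n}$ satisfies only $n\,c_{1/n}=c_1<1$, so that shortcut is gone, and establishing $\beta_t=0$ must instead be extracted from the additivity $\beta_v+\beta_t=\beta_{v+t}$, the behaviour of $f$ under the operations $x\mapsto x^-$ and $x\mapsto x^\sim$, and the centrality $c_t\in C(G)$. This rigidity of the section $(c_t)$ is the heart of the matter; once it is secured, $\mathcal F_\mathbb H(h)=f$ follows and \cite[Thm IV.4.1]{MaL} delivers the claimed equivalence.
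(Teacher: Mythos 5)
Your scheme is the same as the paper's (whose entire proof of Theorem~\ref{th:6.2} is the one sentence that one argues "in the same way" as in Section~\ref{sec:5}), and your diagnosis of where the analogy strains is exactly right: essential surjectivity comes from Theorem~\ref{th:6.1}, faithfulness is the first paragraph of Proposition~\ref{pr:4.1}, and everything hinges on fullness, i.e.\ on showing that the induced $\ell$-homomorphism $h$ satisfies $h(b)=b_1$, equivalently that $f(c_t)=(t,0)$ for all $t\in[0,1]_{\mathbb H}$. But you never prove this rigidity --- you only list ingredients from which it "must be extracted" --- so the proposal is incomplete at precisely the step you yourself call the heart of the matter.

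Moreover, that step cannot be secured: it is false, already for $\mathbb H=\mathbb Z$. Take $(G,b)=(G_1,b_1)=(\mathbb Z,1)$, so $\mathcal F_{\mathbb Z}(\mathbb Z,1)=\Gamma(\mathbb Z\lex\mathbb Z,(1,1))$, a weak (non-strong) $\mathbb Z$-perfect PMV-algebra with $c_1=(1,0)$. The map $\phi(a,g)=(a,a)$ is a unital $\ell$-group endomorphism of $(\mathbb Z\lex\mathbb Z,(1,1))$: it is additive, preserves the lexicographic cone and the lattice operations, and fixes $(1,1)$. Hence its restriction $f$ to the unit interval is a PMV-endomorphism, i.e.\ a legitimate morphism of $\mathcal{WPPMV}_{\mathbb Z}$ as the paper defines that category. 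Here $f(0,k)=(0,0)$ and $f(1,m)=(1,1)$, so in your notation $\beta_1=1\neq 0$ while every one of your constraints holds ($\beta$ is additive, $b_1=\beta_1+h(b)=1+0$, compatibility with $^-,{}^\sim$, and centrality is vacuous since everything is abelian). The only candidate $h$, read off from $M_0=\Infinit(M)$, is $h=0$, which does not preserve the distinguished element and in any case gives $\mathcal F_{\mathbb Z}(h)\neq f$. So $\mathcal F_{\mathbb Z}$ is not full, and neither your argument nor the paper's appeal to Section~\ref{sec:5} can repair this; the statement survives only if the morphisms of $\mathcal{WPPMV}_{\mathbb H}$ are restricted, e.g.\ to PMV-homomorphisms carrying the distinguished system $(c_t)$ of the source to that of the target, in which case your identity $b_1=\beta_1+h(b)$ with $\beta_1=0$ does finish the fullness proof. (Note also that your remark that the strong case is "automatic" uses unique extraction of roots of $1$ and so only covers cyclic $\mathbb H$: for dense $\mathbb H$, say $\mathbb H=\mathbb Z+\sqrt2\,\mathbb Z$ and $G=\mathbb R$, the shearing $(t,g)\mapsto(t,g+\beta(t))$ with $\beta(m+n\sqrt2)=n$ is a unital $\ell$-automorphism fixing $(1,0)$ that is not diagonal, so the same defect already afflicts Proposition~\ref{pr:4.1} and Theorem~\ref{th:4.4}.)
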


\end{document}